\theoremstyle{plain} %% This is the default, anyway
\newtheorem{theor}[equation]{Theorem}
\newtheorem{cor}[equation]{Corollary}
\newtheorem{lem}[equation]{Lemma}
\newtheorem{conjecture}[equation]{Conjecture}
\newtheorem{statement}[equation]{Statement}
\newtheorem{proposition}[equation]{Proposition}
\theoremstyle{definition}
\newtheorem{defin}[equation]{Definition}
\theoremstyle{remark}
\newtheorem{rem}[equation]{Remark}
\newtheorem{ex}[equation]{Example}
\newtheorem{propriete}[equation]{Property}
\newcommand{\BV}{\Delta}
\newcommand{\sectiontriviale}{s}
\def\build#1_#2^#3{\mathrel{\mathop{\kern0pt#1}\limits_{#2}^{#3}}}
\begin{document}
\begin{abstract}
Let $M$ be a path-connected closed oriented $d$-dimensional smooth manifold
and let ${\Bbbk}$ be a principal ideal domain.
By Chas and Sullivan, the shifted free loop space homology of $M$, $H_{*+d}(LM)$
is a Batalin-Vilkovisky algebra.
%Denote by $S_*(\Omega M)$ the singular chains on the pointed loop spaces of $M$.
%Suppose that $M$ is a $K(\pi,1)$ or is simply-connected.
%We show that there is a Van Den Bergh type isomorphism
%$$
%HH^{-p}(S_*(\Omega M),S_*(\Omega M))\cong HH_{p+d}(S_*(\Omega M),S_*(\Omega M)).
%$$
%Therefore, the Gerstenhaber algebra $HH^{*}(S_*(\Omega M),S_*(\Omega M))$
%is a Batalin-Vilkovisky algebra and we have a linear isomorphism
%$HH^{*}(S_*(\Omega M),S_*(\Omega M))\cong H_{*+d}(LM)$.
Let $G$ be a topological group such that $M$ is a classifying space of $G$.
Denote by $S_*(G)$ the (normalized) singular chains on $G$.
Suppose that $G$ is discrete or path-connected.
We show that there is a Van Den Bergh type isomorphism
$$
HH^{-p}(S_*(G),S_*(G))\cong HH_{p+d}(S_*(G),S_*(G)).
$$
Therefore, the Gerstenhaber algebra $HH^{*}(S_*(G),S_*(G))$
is a Batalin-Vilkovisky algebra and we have a linear isomorphism
$$HH^{*}(S_*(G),S_*(G))\cong H_{*+d}(LM).$$
This linear isomorphism is expected to be an isomorphism of Batalin-Vilkovisky algebras.
We also give a new  characterization of Batalin-Vilkovisky algebra in term of derived bracket.
\end{abstract}
\title{\bf Van Den Bergh isomorphisms in String Topology}
\author{Luc Menichi}%, Jean-Claude Thomas}
\address{UMR 6093 associ\'ee au CNRS\\
Universit\'e d'Angers, Facult\'e des Sciences\\
2 Boulevard Lavoisier\\49045 Angers, FRANCE}
\email{firstname.lastname at univ-angers.fr}
%\thanks{Research supported by the University of Toronto (NSERC grants RGPIN 8047-98 and OGP000 7885)}\subjclass{55P35, 16E40, 55P62, 57T30, 55U10}
% 16E40 (Co)homology of rings and algebras (e.g. Hochschild, cyclic, dihedral, etc.)
%  16E45 Differential graded algebras and applications
%  55P35    	Loop spaces
% 57P10  57P10    	Poincaré duality spaces
% 55P50 String topology
\subjclass{55P50, 16E40, 16E45, 55P35, 57P10}

\keywords{String Topology, Batalin-Vilkovisky algebra, Hochschild cohomology, free loop space, derived bracket, Van den Bergh duality, Poincar\'e duality group, Calabi-Yau algebra}
\maketitle
\begin{center}
%\textit{Dedicated to Jean-Claude Thomas, on the occasion of his
%60th birthday}
\end{center}

\section{Introduction}
We work over an arbitrary
principal ideal domain ${\Bbbk}$.
Let $M$ be a compact oriented $d$-dimensional smooth manifold.
Denote by $LM:=map(S^1,M)$ the free loop space on $M$.
Chas and Sullivan~\cite{Chas-Sullivan:stringtop} have shown that the shifted free loop homology
$H_{*+d}(LM)$ has a structure of Batalin-Vilkovisky algebra
(Definition~\ref{definition BV algebre}). In particular, they showed
that $H_{*+d}(LM)$ is a Gerstenhaber algebra (Definition~\ref{definition algebre de Gerstenhaber}).
On the other hand, let $A$ be a differential graded (unital
associative) algebra.
The Hochschild cohomology of $A$ with coefficients in $A$, $HH^*(A,A)$,
is a Gerstenhaber algebra.
These two Gerstenhaber algebras are expected to be related:
\begin{conjecture}\label{iso gerstenhaber goodwillie}
Let $G$ be a topological group such that $M$ is a classifying space of $G$.
There is an isomorphism of Gerstenhaber algebras
$H_{*+d}(LM)\cong HH^*(S_*(G),S_*(G))$ between the free loop space homology and the Hochschild cohomology of the
algebra of singular chains on $G$.
\end{conjecture}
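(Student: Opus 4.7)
The plan is to assemble the conjectured isomorphism as the composite of two separate isomorphisms, and then to argue compatibility of the algebraic structures on each side. On the one hand, Goodwillie's (or Burghelea–Fiedorowicz's) theorem supplies a quasi-isomorphism $S_*(LBG)\simeq CH_*(S_*(G),S_*(G))$ between the singular chains on the free loop space and the Hochschild chain complex; this is $S^1$-equivariant, so loop rotation on $H_*(LM)$ goes to Connes's operator $B$ on $HH_*$. On the other hand, the Van Den Bergh type duality established earlier in the paper gives $HH^{-p}(S_*(G),S_*(G))\cong HH_{p+d}(S_*(G),S_*(G))$. Composing yields the linear isomorphism $H_{*+d}(LM)\cong HH^*(S_*(G),S_*(G))$, and the conjecture asks that this composite be multiplicative and bracket-preserving.

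First I would prove that the cup product on $HH^*(S_*(G),S_*(G))$ transports to the Chas–Sullivan loop product on $H_{*+d}(LM)$. The guiding principle is that Van Den Bergh duality is implemented by capping with a fundamental Hochschild class coming from a Calabi–Yau structure on $S_*(G)$, and this class should correspond, under Goodwillie's isomorphism, to a chain-level lift of the fundamental class $[M]\in H_d(M)$. Under that correspondence, cup product becomes loop product because both are obtained by intersecting the two-loop configuration $LM\times_M LM\hookrightarrow LM\times LM$ with $[M]$. A group-theoretic version of the arguments of F\'elix--Thomas and Vaintrob should carry through in this setting.

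Second I would identify the BV operators. By construction the BV operator on $HH^*(S_*(G),S_*(G))$ is the Connes operator $B$ transferred through Van Den Bergh duality, and by Chas–Sullivan the BV operator on $H_{*+d}(LM)$ is loop rotation. The $S^1$-equivariance of Goodwillie's map then gives the desired match, provided the Van Den Bergh duality is chosen compatibly with the $S^1$-action. Finally, invoking the new characterization of BV algebras in terms of derived brackets (announced in the abstract), the Gerstenhaber bracket is forced by the product and $\Delta$; so once these two match, the bracket automatically matches and the isomorphism is BV.

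The main obstacle is the first compatibility at the chain level: constructing, for $A=S_*(G)$, an $S^1$-equivariant representative of the fundamental Hochschild cycle that projects under Goodwillie's equivalence to a chain-level fundamental class of $BG=M$. This requires controlling orientation and sign data for the Calabi--Yau structure on $S_*(G)$, and genuinely using that $BG$ is a \emph{closed oriented} manifold rather than merely a Poincar\'e duality space. A secondary difficulty is that the Gerstenhaber bracket is not, a priori, captured by operations preserved by a single $S^1$-equivariant chain map; one must either enhance Goodwillie's equivalence to a cyclic $A_\infty$-map or else bypass this via the derived bracket characterization, which is precisely why the multiplicative and BV upgrades remain conjectural while the linear isomorphism is unconditional.
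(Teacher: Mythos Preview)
The statement you are attempting to prove is labeled a \emph{conjecture} in the paper, and the paper does not claim to prove it. What the paper actually establishes (Theorems~\ref{cas discret} and~\ref{cas connexe}) is precisely the partial result your outline describes: the composite $BFG^{-1}\circ\mathcal{D}^{-1}$ is a linear isomorphism compatible with the two $\Delta$ operators, where $BFG$ is the Burghelea--Fiedorowicz--Goodwillie isomorphism and $\mathcal{D}^{-1}$ is the Van Den Bergh duality given by capping with a fundamental Hochschild class. The paper then states explicitly that to settle the conjecture (in the discrete or path-connected case) it would suffice to show this composite is a morphism of graded algebras, and cites Vaintrob for the special case of discrete $G$ over a field of characteristic zero.

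Your outline therefore tracks the paper's strategy, and you are right that once multiplicativity and $\Delta$-compatibility hold, the Gerstenhaber bracket follows from the BV relation. But your proposal does not actually supply the missing multiplicative step: the sentence ``a group-theoretic version of the arguments of F\'elix--Thomas and Vaintrob should carry through'' is a hope, not an argument, and you yourself concede in the final paragraph that the multiplicative upgrade ``remain[s] conjectural.'' So there is no gap in your reasoning \emph{relative to the paper}; rather, you and the paper stop at the same place, and what you have written is a sketch of the reduction the paper carries out, not a proof of the conjecture. One minor divergence worth flagging: you suggest that the smooth manifold structure on $BG$ (rather than mere Poincar\'e duality) may be needed to control the fundamental Hochschild cycle. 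The paper works throughout with oriented Poincar\'e duality spaces and needs no smoothness for its partial results; the fundamental class $c\in HH_d(S_*(G),S_*(G))$ is obtained directly from $[M]\in H_d(M)$ via $BFG^{-1}\circ H_*(s)$ (or via the section $\sigma$ in the discrete case), and $B(c)=0$ is deduced from $S^1$-equivariance of the constant-loop inclusion, not from any tangential data.
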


Suppose that $G$ is discrete or path-connected.
In this paper, we define a Batalin-Vilkovisky algebra structure
on $HH^*(S_*(G), S_*(G))$
and an isomorphism of graded ${\Bbbk}$-modules
$$
BFG^{-1}\circ \mathcal{D}:H_{*+d}(LM)\cong HH^*(S_*(G), S_*(G))
$$
which is compatible with the two $\Delta$ operators of the two Batalin-Vilkovisky algebras:
$BFG^{-1}\circ \mathcal{D}\circ\Delta=\Delta\circ BFG^{-1}\circ \mathcal{D}$.
Indeed, Burghelea, Fiedorowicz~\cite{Burghelea-Fiedorowicz:chak} and Goodwillie~\cite{Goodwillie:cychdfl} gave an isomorphism of graded ${\Bbbk}$-modules
$$
BFG:HH_*(S_*(G), S_*(G))\buildrel{\cong}\over\rightarrow H_{*}(LM)
$$
which interchanges Connes boundary map $B$ and the $\Delta$ operator
on $H_{*+d}(LM)$: $BFG\circ B=\Delta\circ BFG$. And in this paper, our main result is:
\begin{theor}\label{structure BV sur la cohomologie de Hochschild de G}
(Theorems~\ref{cas discret} and~\ref{cas connexe})
Let $G$ be a discrete or a path-connected topological group
such that its classifying space $BG$ is an oriented Poincar\'e duality space
of formal dimension $d$.
Then 

a) there exists ${\Bbbk}$-linear isomorphisms

$$\mathcal{D}:HH_{d-p}(S_*(G),S_*(G))\buildrel{\cong}\over\rightarrow HH^p(S_*(G),S_*(G)).$$

b) If $B$ denotes Connes boundary map
on $HH_*(S_*(G),S_*(G))$ then $\Delta:=-\mathcal{D}\circ B\circ \mathcal{D}^{-1}$
defines a structure of Batalin-Vilkovisky algebra on
$HH^*(S_*(G),S_*(G))$, extending the canonical Gerstenhaber algebra structure.

c) The cyclic homology of $S_*(G)$, $HC_*(S_*(G))$ has a Lie bracket
of degre $2-d$.
\end{theor}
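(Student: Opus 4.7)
The plan is to treat the discrete and path-connected cases by a common template: first establish the Van Den Bergh duality of part (a) at the level of the dga $A=S_*(G)$, then deduce (b) and (c) by transporting Connes' boundary through $\mathcal{D}$ and by invoking the new characterization of BV algebras in terms of derived brackets announced in the abstract.

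For part (a), the Poincar\'e duality hypothesis on $BG$ must be translated into a duality of $A$-bimodules. Since $H_*(BG)\cong \operatorname{Tor}^A_*(\Bbbk,\Bbbk)$ and $H^*(BG)\cong \operatorname{Ext}_A^*(\Bbbk,\Bbbk)$, the orientation class of $BG$ should provide a quasi-isomorphism of $A$-bimodules $A^!\simeq A[d]$, where $A^!:=\operatorname{RHom}_{A\otimes A^{\mathrm{op}}}(A, A\otimes A^{\mathrm{op}})$. In the discrete case this is essentially Bieri--Eckmann duality for Poincar\'e duality groups; in the path-connected case I would build a semifree $A$-bimodule resolution of $A$ from the two-sided bar construction associated with the universal fibration $G\to EG\to BG$, and transport the orientation class to that resolution. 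Once such an equivalence is in hand, capping with the corresponding fundamental class yields $\mathcal{D}:HH_{d-p}(A,A)\xrightarrow{\cong} HH^p(A,A)$ exactly as in Van Den Bergh's original argument.

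Part (b) then splits into two steps. The operator $\Delta:=-\mathcal{D}\circ B\circ\mathcal{D}^{-1}$ is automatically of square zero because $B$ is. The delicate point is the BV identity relating $\Delta$ to the cup product and Gerstenhaber bracket on $HH^*(A,A)$. Here I would invoke the derived-bracket characterization of BV algebras proved earlier in the paper: it is enough to verify that the derived bracket
\[
\{a,b\}_\Delta:=\Delta(a\cup b)-\Delta(a)\cup b-(-1)^{|a|}a\cup\Delta(b)
\]
agrees up to sign with the canonical Gerstenhaber bracket on $HH^*(A,A)$. This identity should follow from the fact that $\mathcal{D}$ intertwines the cup product on $HH^*$ with the cap-product action of $HH^*$ on $HH_*$, combined with the standard algebraic identity expressing the Gerstenhaber bracket in terms of Connes' operator $B$ on the Hochschild complex.

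For part (c), transporting the operations of (b) back through $\mathcal{D}$ yields a bracket of degree $1-d$ on $HH_*(A,A)$. The Connes long exact sequence between $HH_*(A,A)$ and $HC_*(A)$ then gives the Lie bracket on $HC_*(A)$ by the same lifting argument that Chas--Sullivan use to pass from the BV algebra $H_{*+d}(LM)$ to the string bracket on $S^1$-equivariant loop homology, which accounts for the claimed degree $2-d$. The main obstacle will be part (a) in the path-connected case: producing the bimodule orientation class and the quasi-isomorphism $A^!\simeq A[d]$ requires careful handling of semifree dg-bimodule resolutions of $S_*(G)$, and this is precisely where the discrete and path-connected settings really diverge.
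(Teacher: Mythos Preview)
Your overall strategy has the right shape, but it diverges from the paper's argument in part (a) and omits a key ingredient in part (b).

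For part (a), you propose to first establish the Calabi--Yau condition $A^!\simeq A[d]$ and then invoke Van Den Bergh. The paper does \emph{not} go through this; in fact it derives the Calabi--Yau property as a \emph{consequence} of the duality isomorphism (Proposition~\ref{dualite de Van den Bergh implique Calabi-Yau}). In the discrete case, the paper compares the Hochschild cap product directly with the group-cohomology cap product (Corollary~\ref{cap group egal cap Hochschild avec section}), so that Poincar\'e duality for the group $G$ immediately yields $c\cap-:HH^p(\Bbbk[G],N)\xrightarrow{\cong} HH_{d-p}(\Bbbk[G],N)$ for \emph{every} bimodule $N$. In the path-connected case the paper's route is more elementary than building a bimodule resolution: it transports Poincar\'e duality from $S_*(M)$ to the coalgebra $B(S_*(G))$ via explicit coalgebra quasi-isomorphisms (Proposition~\ref{invariance homotopique algebre a dualite de Poincare}), which settles the case of the \emph{trivial} bimodule $N=\Bbbk$. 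The general $N$ is then handled by filtering $N$ by powers of the augmentation ideal of $A^e$, reducing to trivial quotients by the five lemma, and passing to the inverse limit via a Mittag--Leffler argument (Proposition~\ref{quasi-iso du cap apres limite inverse}, Corollary~\ref{cas simplement connexe}). Your proposed construction of a bimodule orientation class from the universal fibration would have to confront homological smoothness of the dga $S_*(G)$, which the paper never needs and which you yourself flag as the main obstacle.

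For part (b), your sketch is close to Ginzburg's argument as rewritten in Proposition~\ref{BV algebre homologie de Hochschild}, but you omit the decisive hypothesis $B(c)=0$, equivalently $\Delta(1)=0$. The derived-bracket characterization (Proposition~\ref{equivalence BV derived bracket}) requires both $l_{\{a,b\}}=-[[l_a,\Delta],l_b]$ \emph{and} $\Delta(1)=0$; the first follows from the calculus identity $i_{\{a,b\}}=[[i_a,B],i_b]$ together with the $HH^*(A,A)$-linearity of $\mathcal{D}^{-1}$ (which is what your ``$\mathcal{D}$ intertwines cup and cap'' amounts to), but the second needs a separate argument. The paper secures $B(c)=0$ from $S^1$-equivariance of the constant-loop section $s:M\hookrightarrow LM$ in the connected case, and from Karoubi's vanishing of Connes' $B$ on $H_*(G;\Bbbk)$ together with the fact that $\sigma$ is a map of cyclic modules in the discrete case. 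Without this, the BV identity does not follow from what you have written.

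Part (c) is indeed obtained from (b) by the Chas--Sullivan-type lifting through the Connes long exact sequence; the paper simply cites the relevant proposition from the author's earlier work.
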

By~\cite[Proposition 28]{Menichi:BV_Hochschild}, c) follows directly from b). 
Note that when $G$ is a discrete group, the algebra of normalized singular chains on $G$, $S_*(G)$ is just the group ring ${\Bbbk}[G]$.

To prove Conjecture~\ref{iso gerstenhaber goodwillie} in the discrete or
path-connected case,
it suffices now to show that
the composite $BFG^{-1}\circ \mathcal{D}$ is a morphism of graded algebras.
When ${\Bbbk}$ is a field of characteristic $0$ and $G$ is discrete,
this was proved by Vaintrob~\cite{Vaintrob:stBVaHcohGoldb}.

Suppose now that
\begin{equation}\label{corps et simplement connexe}
\text{$M$ is simply-connected and that ${\Bbbk}$ is a field.}
\end{equation}
In this case, there is a more famous dual conjecture relating
Hochschild cohomology and string topology.
\begin{conjecture}\label{conjecture iso algebre de Gerstenhaber}
\noindent Under~(\ref{corps et simplement connexe}),
there is an isomorphism of Gerstenhaber algebras
$H_{*+d}(LM)\cong HH^*(S^*(M),S^*(M))$ between the free loop space homology and the Hochschild cohomology of the
algebra of singular cochains on $M$.
\end{conjecture}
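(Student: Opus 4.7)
The plan is to prove Conjecture~\ref{conjecture iso algebre de Gerstenhaber} by combining a cochain analogue of Theorem~\ref{structure BV sur la cohomologie de Hochschild de G} with the Jones isomorphism. Under~(\ref{corps et simplement connexe}), $M$ is a simply-connected Poincar\'e duality space over a field, and the singular cochain algebra $S^*(M)$ should play for this conjecture the role that $S_*(G)$ plays in the body of the paper.

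First, I would establish a Van den Bergh duality for $S^*(M)$. The Poincar\'e duality of $M$ should supply an $S^*(M)$-bimodule quasi-isomorphism between $S^*(M)$ and its derived dual shifted by $d$, producing an isomorphism
$\mathcal{D}':HH_{*+d}(S^*(M),S^*(M))\buildrel{\cong}\over\rightarrow HH^{*}(S^*(M),S^*(M)).$
The natural candidate for constructing $\mathcal{D}'$ explicitly is a Poincar\'e duality cdga model of $M$: for instance a Lambrechts--Stanley model, or a minimal Sullivan model equipped with a fundamental class. By the same derived-bracket machinery used to prove Theorem~\ref{structure BV sur la cohomologie de Hochschild de G}(b), $\mathcal{D}'$ then transports Connes' boundary $B$ on $HH_*(S^*(M),S^*(M))$ to a BV operator $\Delta$ on $HH^*(S^*(M),S^*(M))$ extending the canonical Gerstenhaber structure.

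Second, I would combine $\mathcal{D}'$ with Jones' isomorphism $J:HH_*(S^*(M),S^*(M))\cong H_*(LM)$, which holds under~(\ref{corps et simplement connexe}) and intertwines Connes' $B$ with the natural $\Delta$ operator on $H_{*+d}(LM)$ induced by the $S^1$-action on $LM$. Composing $J$ with $\mathcal{D}'$ yields a graded ${\Bbbk}$-linear isomorphism
$H_{*+d}(LM)\cong HH^{*}(S^*(M),S^*(M))$
that is automatically compatible with the two $\Delta$ operators.

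The main obstacle is to upgrade this $\Delta$-compatible linear isomorphism to an isomorphism of Gerstenhaber algebras, i.e.\ to match the Chas--Sullivan loop product on $H_{*+d}(LM)$ with the cup product on $HH^*(S^*(M),S^*(M))$. Using the new characterization of BV algebras in terms of derived brackets announced in the abstract, it suffices to identify the two associative products. Equivalently, the cup product on $HH^*$, transported through $\mathcal{D}'$ and $J$, must recover the geometric loop product. This is essentially the content of the conjectures of Cohen--Jones and F\'elix--Thomas; carrying it out requires an explicit chain-level comparison inside a Poincar\'e duality model, with careful coherence between the diagonal approximation defining the loop product and the Poincar\'e duality quasi-isomorphism defining $\mathcal{D}'$. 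This chain-level coherence is the hard part.
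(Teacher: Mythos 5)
The statement you are asked to prove is labelled as a \emph{Conjecture} in the paper, and the paper does not prove it: it only reduces it to an open problem. Your proposal follows exactly the same reduction and stops at exactly the same open point, so it is not a proof. Concretely, your first two steps are the content of Theorem~\ref{structure BV sur la cohomologie de Hochschild des cochaines} (the Felix--Thomas--Vigu\'e/Menichi duality $FTV$, which the paper states with coefficients in the dual bimodule $S^*(M)^\vee$ rather than in $S^*(M)$ itself, the two being equivalent over a field) together with Jones' isomorphism $J$ intertwining $\Delta$ with Connes' (co)boundary. The paper records precisely this and then says: ``to prove conjecture~\ref{conjecture iso algebre de Gerstenhaber}, it suffices to show that the composite $FTV\circ J$ is a morphism of graded algebras.'' That last step --- identifying the Chas--Sullivan loop product with the cup product on Hochschild cohomology through this particular composite --- is the entire content of the conjecture, and your proposal explicitly defers it (``This chain-level coherence is the hard part'').

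Two further cautions. First, the derived-bracket characterization of BV algebras does not help you identify the two products; it only tells you that \emph{once} a linear isomorphism is known to respect both the products and the $\Delta$ operators, it automatically respects the Gerstenhaber brackets. So invoking it does not reduce the workload: the product comparison is irreducibly the missing ingredient. Second, your appeal to a Lambrechts--Stanley or Sullivan model implicitly restricts to characteristic zero, whereas hypothesis~(\ref{corps et simplement connexe}) allows any field; the paper's duality isomorphism $FTV$ is constructed without such models. In summary, the proposal correctly reassembles the known partial results (as the paper itself does) but contains no argument for the one step that would turn the conjecture into a theorem.
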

And in fact, Theorem~\ref{structure BV sur la cohomologie de Hochschild de G}
is the Eckmann-Hilton or Koszul dual of the following theorem.
\begin{theor}(\cite[Theorem 23]{Felix-Thomas-Vigue:Hochschildmanifold}
and~\cite[Theorem 22]{Menichi:BV_Hochschild})\label{structure BV sur la cohomologie de Hochschild des cochaines}
Assume~(\ref{corps et simplement connexe}).

a) There exist isomorphism of graded ${\Bbbk}$-vector spaces
$$
FTV:HH^{p-d}(S^*(M),S^*(M)^\vee)\buildrel{\cong}\over\rightarrow
HH^{p}(S^*(M),S^*(M)).
$$

b) The Connes coboundary $B^\vee$ on $HH^*(S^*(M),S^*(M)^\vee)$
defines via the isomorphism $FTV$
a structure of Batalin-Vilkovisky algebra extending
the Gerstenhaber algebra $HH^*(S^*(M),S^*(M))$.
\end{theor}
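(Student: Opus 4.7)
The plan is to obtain part (a) from a Poincaré-duality quasi-isomorphism between $S^*(M)$ and its shifted dual as bimodules, then to establish part (b) by transporting the Connes coboundary from Hochschild homology to an operator on $HH^*(S^*(M),S^*(M))$ and verifying the BV identity directly.

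For part (a), since $M$ is simply connected and $\Bbbk$ is a field, one may replace $S^*(M)$ by a quasi-isomorphic commutative DGA $A$; since Hochschild cohomology is invariant under quasi-isomorphisms of DGAs, it suffices to prove the statement for $A$. A theorem of Lambrechts--Stanley provides such an $A$ which is moreover a Poincaré-duality CDGA: there is a linear form $\varepsilon:A\to\Bbbk$ of degree $-d$ such that the pairing $\langle a,b\rangle=\varepsilon(ab)$ is perfect, and the adjoint is a quasi-isomorphism of $A$-bimodules $A\simeq A^\vee[d]$. Applying $HH^p(A,-)$ yields the required isomorphism
\[
FTV:HH^{p-d}(A,A^\vee)\buildrel{\cong}\over\rightarrow HH^p(A,A).
\]

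For part (b), $HH_*(A,A)$ carries Connes' boundary $B$ of degree $+1$ with $B^2=0$, and dualizing produces Connes' coboundary $B^\vee$ on $HH^*(A,A^\vee)\cong HH_{-*}(A,A)^\vee$ of degree $-1$. Transporting $B^\vee$ through $FTV$ gives a square-zero operator $\Delta$ of degree $-1$ on $HH^*(A,A)$. It remains to show that $(HH^*(A,A),\smile,\{-,-\},\Delta)$ is a BV algebra, i.e.\ that $\Delta$ measures the failure of the cup product and the Gerstenhaber bracket to be compatible. Here I would invoke the derived-bracket characterization of BV algebras announced in the introduction of the present paper: it is enough to verify that the Gerstenhaber bracket on $HH^*(A,A)$ is reconstructed from $\Delta$ and the cup product by the derived-bracket formula. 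Using the standard description of $B$ via cyclic permutations on the bar complex and the formula for the Gerstenhaber bracket in terms of braces, this reduces to a chain-level identity on the Hochschild cochain complex of $A$.

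The main obstacle is this last step: matching, with correct signs, the transport of $B^\vee$ across Poincaré duality with the Gerstenhaber bracket at the cochain level. This requires choosing compatible models for the cup product, the bracket, the Poincaré-duality quasi-isomorphism $A\simeq A^\vee[d]$, and the Connes boundary, then tracking several homotopies between them --- essentially the content of~\cite[Theorem 23]{Felix-Thomas-Vigue:Hochschildmanifold} and~\cite[Theorem 22]{Menichi:BV_Hochschild}. Independence of the resulting BV structure from the choice of Poincaré-duality model follows a posteriori from the uniqueness up to quasi-isomorphism of the commutative model $A$ of $S^*(M)$.
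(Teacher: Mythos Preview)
The theorem is quoted here from~\cite{Felix-Thomas-Vigue:Hochschildmanifold} and~\cite{Menichi:BV_Hochschild}; the present paper does not give its own proof, so there is nothing in this paper to compare against directly. That said, your outline diverges from the approach in the cited sources and contains a genuine gap.

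For part~(a) you begin by replacing $S^*(M)$ with a quasi-isomorphic \emph{commutative} DGA and then invoke the Lambrechts--Stanley Poincar\'e duality model. Hypothesis~(\ref{corps et simplement connexe}) assumes only that $\Bbbk$ is a field, with no restriction on the characteristic. Over a field of positive characteristic $S^*(M)$ is in general not quasi-isomorphic to any commutative DGA (Steenrod operations obstruct formality of the $E_\infty$ structure to a strictly commutative one), and the Lambrechts--Stanley theorem is a rational-homotopy result. So your very first reduction fails in the generality claimed. The argument in~\cite{Felix-Thomas-Vigue:Hochschildmanifold} avoids commutativity entirely: Poincar\'e duality gives a quasi-isomorphism of right $S^*(M)$-modules $S_*(M)\buildrel{\simeq}\over\rightarrow S^*(M)^\vee$ (see the paragraph following Property~\ref{forme bilineaire algebre coalgebre} in this paper), and $FTV$ is built from this at the level of Hochschild cohomology of the non-commutative algebra $S^*(M)$, as an isomorphism of $HH^*(S^*(M),S^*(M))$-modules. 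No strict bimodule quasi-isomorphism $A\simeq A^\vee[d]$ at the algebra level is needed or available.

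For part~(b) you correctly point to the derived-bracket mechanism, but you also correctly flag that the actual content is the chain-level compatibility you have not carried out. In~\cite{Menichi:BV_Hochschild} the argument is structural rather than a direct chain computation: once $FTV$ is known to be $HH^*(S^*(M),S^*(M))$-linear and the unit is sent to a $B^\vee$-closed class, the calculus relations between the Gerstenhaber bracket, the cup action and Connes (co)boundary force the BV identity, exactly as in Proposition~\ref{BV algebre homologie de Hochschild} of this paper (which is the Eckmann--Hilton dual statement). Framing it as ``tracking several homotopies between models'' misidentifies where the work lies.
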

Jones~\cite{JonesJ:Cycheh} proved that there is an isomorphism
$$J:H_{p+d}(LM)\buildrel{\cong}\over\rightarrow
HH^{-p-d}(S^*(M),S^*(M)^\vee)$$
such that the $\Delta$ operator of the Batalin-Vilkovisky algebra
$H_{*+d}(LM)$ and Connes coboundary map $B^\vee$ on
$HH^{*-d}(S^*(M),S^*(M)^\vee)$ satisfies $J\circ\Delta=B^\vee\circ J$.
Therefore, as we explain in~\cite{Menichi:BV_Hochschild}, to prove conjecture~\ref{conjecture iso algebre de Gerstenhaber}, it suffices to show that
the composite $FTV\circ J$ is a morphism of graded algebras.

In~\cite{Felix-Menichi-Thomas:GerstduaiHochcoh},
together with Felix and Thomas, we prove that Hochschild
cohomology satisfies some Eckmann-Hilton or Koszul duality.
\begin{theor}~\cite[Corollary 2]{Felix-Menichi-Thomas:GerstduaiHochcoh}\label{iso duality in Gerstenhaber}(See also~\cite[Theorem 69 and below]{Chataur-Menichi:stringclass})
Let ${\Bbbk}$ be a field.
Let $G$ be a connected topological group.
Denote by $S^*(BG)$ the algebra of singular cochains on the classifying
space of $G$.
Suppose that for all $i\in\mathbb{N}$, $H_i(BG)$ is finite dimensional.
Then there exists an isomorphism of Gerstenhaber algebras
$$
Gerst:HH^*(S_*(G),S_*(G))\buildrel{\cong}\over\rightarrow HH^*(S^*(BG),S^*(BG)).
$$
\end{theor}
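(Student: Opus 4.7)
The plan is to identify $S^*(BG)$ with the classical bar dual of $S_*(G)$, and then invoke a general invariance statement asserting that Hochschild cohomology, \emph{together with its Gerstenhaber structure}, is preserved under this Koszul-type duality. First I would recall that for a path-connected topological group $G$ the reduced bar construction $B(S_*(G))$ is quasi-isomorphic to $S_*(BG)$ as a differential graded coalgebra; this is the classical bar/nerve comparison going back to Milnor--Moore and Eilenberg--Mac Lane for topological groups. The finite-type hypothesis $\dim H_i(BG)<\infty$ is precisely what is needed so that linear dualization sends this coalgebra quasi-isomorphism to a quasi-isomorphism of DGAs, yielding $S^*(BG)\simeq B(S_*(G))^\vee$. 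Under this identification the theorem reduces to producing, for every augmented connected DGA $A$ of finite type, an isomorphism of Gerstenhaber algebras $HH^*(A,A)\cong HH^*(B(A)^\vee,B(A)^\vee)$, and then specializing to $A=S_*(G)$.

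Next I would construct the underlying quasi-isomorphism at the level of Hochschild cochain complexes through the standard bar/cobar machinery. The twisted tensor product adjunction provides a Quillen equivalence between $A$-bimodules and $B(A)^\vee$-bimodules that sends the diagonal bimodule to the diagonal bimodule. Since $HH^*(A,A)$ is the derived endomorphism algebra of the diagonal bimodule, this already gives the isomorphism of associative algebras with cup product. Concretely one produces a two-sided bar resolution which serves as a common refinement: the resulting Hochschild cochain complex admits compatible comparison maps to both $C^*(A,A)$ and $C^*(B(A)^\vee,B(A)^\vee)$, each of which is a quasi-isomorphism under the finite-type hypothesis.

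The principal obstacle, and the technical heart of~\cite{Felix-Menichi-Thomas:GerstduaiHochcoh}, is to upgrade this cup-product isomorphism to an isomorphism of Gerstenhaber algebras. The Gerstenhaber bracket is defined from the brace operations on Hochschild cochains, which themselves come from the (homotopy) action of the little discs operad via Deligne's conjecture, and there is no reason a bare zigzag of chain-level quasi-isomorphisms should respect this finer structure. One must therefore refine the comparison so that it intertwines the brace-operad actions up to coherent homotopy; in practice this is done either by working with explicit $A_\infty$-enhancements of the bar/cobar equivalence, or by realizing both Hochschild complexes as derived centers in a symmetric monoidal $\infty$-category where the $E_2$-algebra structure is intrinsic. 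Once that compatibility is in hand, substituting $A=S_*(G)$ and using the identification $B(S_*(G))^\vee\simeq S^*(BG)$ from the first paragraph yields the desired isomorphism $Gerst$.
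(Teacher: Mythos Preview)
The present paper does not contain a proof of this theorem: it is quoted verbatim from \cite[Corollary 2]{Felix-Menichi-Thomas:GerstduaiHochcoh} (with a pointer to \cite[Theorem 69]{Chataur-Menichi:stringclass}) and used as a black box to relate Conjectures~\ref{iso gerstenhaber goodwillie} and~\ref{conjecture iso algebre de Gerstenhaber}. So there is no ``paper's own proof'' to compare your proposal against.

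That said, your outline is a fair description of the \emph{strategy} behind the cited result, but it is not a proof. You correctly identify the two ingredients: (i) the coalgebra quasi-isomorphism $B(S_*(G))\simeq S_*(BG)$ (cf.\ \cite[Proposition 6.13]{Felix-Halperin-Thomas:dgait}, used elsewhere in this paper) together with finite-type dualization to get $S^*(BG)\simeq B(S_*(G))^\vee$; and (ii) a Koszul-duality statement $HH^*(A,A)\cong HH^*(B(A)^\vee,B(A)^\vee)$ as Gerstenhaber algebras. Where your write-up becomes hand-waving is precisely at the point you yourself flag as the ``principal obstacle'': invoking Deligne's conjecture, $A_\infty$-enhancements, or derived centers in a symmetric monoidal $\infty$-category is not what \cite{Felix-Menichi-Thomas:GerstduaiHochcoh} actually does, and in any case none of these phrases constitutes an argument. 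The cited paper proceeds by constructing an explicit zigzag of quasi-isomorphisms of Hochschild cochain complexes and checking compatibility with cup product and Gerstenhaber bracket directly, with careful sign bookkeeping; your third paragraph gestures at several possible high-tech replacements without carrying any of them out. If you intend this as a genuine alternative proof, you would need to pin down one of those routes and execute it.
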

Therefore under~(\ref{corps et simplement connexe}),
Conjectures~\ref{conjecture iso algebre de Gerstenhaber}
and~\ref{iso gerstenhaber goodwillie} are equivalent
and under~(\ref{corps et simplement connexe}),
Theorem~\ref{structure BV sur la cohomologie de Hochschild de G} as stated in
this introduction follows from Theorem~\ref{structure BV sur la cohomologie de Hochschild des cochaines}.

The problem is that the isomorphism $Gerst$ in Theorem~\ref{iso duality in Gerstenhaber} does not admit a simple formula.
On the contrary, as we explain in Theorems~\ref{cas discret} and~\ref{cas connexe}, in this paper, the isomorphism $\mathcal{D}$ is very simple:
$\mathcal{D}^{-1}$ is given by the cap product with a fundamental class
$c\in HH_d(S_*(G),S_*(G))$.

In~\cite[Theorem 3.4.3 i)]{Ginzburg:CYalgebras}, Ginzburg
(See also~\cite[Proposition 1.4]{Lambre:dualiteVdB}) shows that
for any Calabi-Yau algebra $A$, the Van den Bergh duality isomorphism
$\mathcal{D}:HH_{d-p}(A,A)\buildrel{\cong}\over\rightarrow HH^p(A,A)$
is $HH^*(A,A)$-linear: $\mathcal{D}^{-1}$ is also given by the cap product with a fundamental class
$c\in HH_d(A,A)$.

We now give the plan of the paper:

{\bf Section 2:}
We recall the definitions of the Bar construction, of the Hochschild
(co)chain complex and of Hochschild (co)homology.

{\bf Section 3:}
We show that, for some augmented differential graded algebra $A$ such that
the dual of its reduced bar construction $B(A)^\vee$ satisfies Poincar\'e duality,
we have a Van den Bergh duality isomorphism
$HH_{d-p}(A,A)\cong HH^p(A,A)$ if $A$ is connected (Corollaries~\ref{cas simplement connexe} and~\ref{cas cochain connexe}).

{\bf Section 4:}
There is a well known isomorphism between group (co)homology and
Hochschild (co)homology. We show that, through this isomorphism,
cap products in Hochschild (co)homology
correspond to cap products in group (co)homology.

{\bf Section 5:}
We give a new characterization of Batalin-Vilkovisky algebras.

{\bf Section 6:}
Ginzburg proved that if Hochschild (co)homology satisfies
a Van den Bergh duality isomorphism $HH_{d-p}(A,A)\cong HH^p(A,A)$
then Hochschild cohomology has a Batalin-Vilkovisky algebra structure.
We rewrite the proof of Ginzburg using our new characterization of
Batalin-Vilkovisky algebras and insisting on signs.

{\bf Section 7:}
We show that a differential graded algebra quasi-isomorphic to
an algebra satisfying Poincar\'e duality, also satisfies Poincar\'e
duality
(Proposition~\ref{invariance homotopique algebre a dualite de Poincare}).
Finally, we show our main theorem for path-connected topological group.

{\bf Section 8:}
We show our main theorem for discrete groups.
Extending a result of Kontsevich~\cite[Corollary 6.1.4]{Ginzburg:CYalgebras}
and Lambre~\cite[Lemme 6.2]{Lambre:dualiteVdB},
we also show that, over any commutative ring ${\Bbbk}$, the group ring
${\Bbbk}[G]$ of an orientable Poincar\'e duality group is a Calabi-Yau algebra.

{\bf Section 9:}
Let $G$ be a path-connected compact Lie group of dimension $d$.
We give another Van Den Bergh type isomorphism
$$
HH^{p}(S^*(BG),S^*(BG))\cong HH_{-d-p}(S^*(BG),S^*(BG)).
$$
Therefore, the Gerstenhaber algebra $HH^{*}(S^*(BG),S^*(BG))$
is a Batalin-Vilkovisky algebra and we have a linear isomorphism
$$HH^{*}(S^*(BG),S^*(BG))\cong H^{*+d}(LBG).$$

{\bf Appendix:}
We recall the notion of derived bracket following Kosmann-Schwarzbach
~\cite{Kosmann-Schwarzbach:PoissontoGerst}.
We interpret our new characterization of Batalin-Vilkovisky algebra
in term of derived bracket (Theorem~\ref{injection d'une BV-algebre dans End}).
To any differential graded algebra $A$, we associate

-a new Lie bracket on $A$ (Remark~\ref{crochet de Lie associe a une adg}),

-a new Gerstenhaber algebra which is a sub algebra of the 
endomorphism algebra of $HH_*(A,A)$
(Theorem~\ref{hochschild morphism de gerstenhaber}).

We conjecture that
Theorem~\ref{structure BV sur la cohomologie de Hochschild de G}
is true without assuming that $G$ is discrete or path-connected.
Note that the proof of the discrete case (Sections 4 and 8) is independent
of the proof of the path-connected case (Sections 3 and 7).

{\em Acknowledgment:}
We wish to thank Jean-Claude Thomas for several discussions, in particular for pointing the Mittag-Leffler
condition which is the key of Proposition~\ref{quasi-iso du cap apres limite inverse}.

\section{Hochschild homology and cohomology}

We work over an arbitrary commutative ring ${\Bbbk}$ except in sections 3 and 7, where ${\Bbbk}$ is assumed to be a principal ideal domain and in section 9
where ${\Bbbk}$ is assumed to be a field.
We use the graded differential algebra of~\cite[Chapter
3]{Felix-Halperin-Thomas:ratht}.
In particular,
an element of lower degree $i\in\mathbb{Z}$ is by the
{\it classical convention}~\cite[p. 41-2]{Felix-Halperin-Thomas:ratht}
of upper degree $-i$. Differentials are of lower degree $-1$.
All the algebras considered in this paper, are unital
and associative.
Let $A$ be a differential graded algebra. 
Let $M$ be a right $A$-module and $N$ be a left $A$-module.
Denote by $sA$ the suspension of $A$, $(s A)_i=A_{i-1}$.
Let $d_0$ be the differential on the tensor product of complexes
$M\otimes T(sA)\otimes N$.
We denote the tensor product of the elements $m\in M$, $sa_1\in sA$,
\ldots , $sa_k\in sA$ and $n\in N$ by $m[a_1|\cdots|a_k]n$.
Let $d_1$ be the differential on the graded vector space
$M\otimes T(sA)\otimes N$ defined by:
\begin{align*}
d_1m[a_1|\cdots|a_k]n=&(-1)^{\vert m\vert } ma_1[a_2|\cdots|a_k]n\\
&+\sum _{i=1}^{k-1} (-1)^{\varepsilon_i}{m[a_1|\cdots|a_ia_{i+1}|\cdots|a_k]n}\\
&-(-1)^{\varepsilon_{k-1}} m[a_1|\cdots|a_{k-1}]a_kn;
\end{align*}
Here $\varepsilon_i=\vert m\vert +\vert a_1\vert+\cdots +\vert a_i\vert+i$.

The {\it bar construction of $A$ with coefficients in $M$ and in $N$}, denoted
$B(M;A;N)$, is the complex $(M\otimes
T(sA)\otimes N,d_0+d_1)$.
The {\it bar resolution of $A$}, denoted
$B(A;A;A)$, is the differential graded $(A,A)$-bimodule $(A\otimes
T(sA)\otimes A,d_0+d_1)$.
If $A$ is augmented then the {\it reduced bar construction of $A$},
denoted $B(A)$, is $B({\Bbbk};A;{\Bbbk})$.

Denote by $A^{op}$ the opposite algebra of $A$ and by $A^e:=A\otimes A^{op}$
the envelopping algebra of $A$.
Let $M$ be a differential graded $(A,A)$-bimodule.
Recall that any $(A,A)$-bimodule can be considered as a left (or right)
$A^{e}$-module.
The {\it Hochschild chain complex} is the complex
$M\otimes_{A^{e}} B(A;A;A)$ denoted $\mathcal{C}_*(A,M)$.
Explicitly $\mathcal{C}_*(A,M)$ is the complex ($M\otimes T(sA),d_0+d_1)$
with $d_0$ obtained by tensorization and
~\cite[(10) p. 78]{Cuntz-Skandalis-Tamarkin:cyclichomnoncomgeom}
\begin{align*}
d_1m[a_1|\cdots|a_k]=&(-1)^{\vert m\vert } ma_1[a_2|\cdots|a_k]\\
&+\sum _{i=1}^{k-1} (-1)^{\varepsilon_i}{m[a_1|\cdots|a_ia_{i+1}|\cdots|a_k]}\\
&-(-1)^{\vert sa_k\vert\varepsilon_{k-1}} a_km[a_1|\cdots|a_{k-1}].
\end{align*}
The {\it Hochschild homology of $A$ with coefficients in $M$} is the homology $H$
of the Hochschild chain complex:
$$HH_*(A,M):=H(\mathcal{C}_*(A,M)).$$
 The {\it Hochschild cochain complex} of $A$ with coefficients in $M$ is
the   complex $\mbox{Hom}_{A^{e}}(B(A;A;A),M)$ denoted $\mathcal{C}^\ast  (A,M)$.
Explicitly $\mathcal{C}^\ast  (A,M)$ is the complex 
    $$(\mbox{Hom}   (T(sA), M), D_0+D_1).$$
Here for $f \in  \mbox{Hom}( T(sA), M) $, 
  $D_0(f)([\,]) = d_M(f([\,]))$, $D_1(f)([\,])=0$, and for $k\geq 1$,
we have:
   $$D_0(f)([a_1|a_2|...|a_k])
  = d_{M}(f\left([a_1|a_2|...|a_k])\right) - \sum _{i=1} ^k
  (-1)^{\overline \epsilon_i} f([a_1|...|d_Aa_i|...|a_k])$$  and
  $$\renewcommand{\arraystretch}{1.6}
\begin{array}{ll}
D_1(f)([a_1|a_2|...|a_k])= &
   - (-1)^{|sa_1|\, |f|}  a_1 f([a_2|...|a_k])\\ &
- \sum _{i=2} ^k
  (-1)^{\overline \epsilon_i} f([a_1|...|a_{i-1}a_i|...|a_k])\\
 & + (-1) ^{\overline\epsilon _k} f([a_1|a_2|...|a_{k-1}])a_k
  \,,
\end{array}
\renewcommand{\arraystretch}{1}
$$
 where
  $\overline \epsilon _i = |f|+|sa_1|+|sa_2|+...+|sa_{i-1}|$.

The {\it Hochschild cohomology of $A$ with coefficients in $M$} is
 $$
 HH^*(A,M)=  H( \mathcal{C} ^\ast(A,M)).
 $$
Suppose that $A$ has an augmentation $\varepsilon:A\twoheadrightarrow {\Bbbk}$.
Let $\overline{A}:=\text{Ker}\varepsilon$ be the augmentation ideal.
We denote by $\overline{B}(A):=(Ts\overline{A},d_0+d_1)$ the normalized
reduced Bar construction, by
$\overline{\mathcal{C}}_*(A,M):=(M\otimes T(s\overline{A}),d_0+d_1)$ the normalized Hochschild chain complex and by $\overline{\mathcal{C}}^*(A,M):=(\mbox{Hom}   (T(s\overline{A}), M), D_0+D_1)$ the normalized Hochschild cochain complex.

\section{The isomorphism between Hochschild cohomology and Hochschild
  homology for differential graded algebras}\label{iso cohomology homology}
Let $A$ be a differential graded algebra.
Let $P$ and $Q$ be two $A$-bimodules.

The action of $HH^*(A,Q)$ on $HH_*(A,P)$ comes from a (right) action
of the $\mathcal{C}^*(A,Q)$ on $\mathcal{C}_*(A,P)$ given by
~\cite[(18) p. 82]{Cuntz-Skandalis-Tamarkin:cyclichomnoncomgeom},
~\cite{Lambre:dualiteVdB}

$$\cap:\mathcal{C}_*(A,P)\otimes\mathcal{C}^*(A,Q)\rightarrow \mathcal{C}_{*}(A,P\otimes_A Q)$$
\begin{equation}\label{cap Hochschild}
(m[a_1|\dots|a_n],f)\mapsto
(m[a_1|\dots|a_n])\cap f:=\sum_{p=0}^n \pm
(m\otimes_A f[a_1|\dots|a_p])[a_{p+1}|\dots|a_n].
\end{equation}
Here $\pm$ is the Koszul sign
$(-1)^{\vert f\vert (\vert a_1\vert+\dots \vert a_n\vert+n)}$~\cite[Proof of Lemma 16]{Menichi:BV_Hochschild}.
% Avec ce signe, c'est vraiment une action a droite.
% Pour avoir une action a gauche en cohomologie, il faut mettre
% le signe (-1)^{\vert f\vert\vert m\vert}
% Donc Tsygan fait une erreur dans sa formule

Let $f:A\rightarrow B$ be a morphism of differential graded algebras
and let $N$ be a $B$-bimodule.
The linear map $B\otimes_A N\rightarrow N$, $b\otimes n\mapsto b.n$
is a morphism of $B$-bimodules.
We call again cap product the composite
\begin{equation}\label{cap Hochschild morphism algebres}
\mathcal{C}_*(A,B)\otimes\mathcal{C}^*(A,N)
\buildrel{\cap}\over\rightarrow
\mathcal{C}_{*}(A,B\otimes_A N)
\rightarrow \mathcal{C}_{*}(A,N).
\end{equation}
In this paper, our goal (statement~\ref{iso donne par cap})
is to relate the cap product with $B=A$ to the cap product with
$N=B={\Bbbk}$.
\begin{statement}\label{iso donne par cap}
Let $A$ be an augmented differential graded algebra
such that each $A_i$ is ${\Bbbk}$-free, $i\in\mathbb{Z}$.
Let $c\in HH_d(A,A)$.
Denote by $[m]\in\text{Tor}^A_d({\Bbbk},{\Bbbk})$ the image of $c$ by the morphism
$$
HH_d(A,\varepsilon):HH_d(A,A)\rightarrow HH_d(A,{\Bbbk})=\text{Tor}^A_d({\Bbbk},{\Bbbk}).
$$
Suppose that 

$\bullet$ there exists a positive integer $n$ such that for all
$i\leq -n$ and for all $i\geq n$,
$\text{Tor}^A_i({\Bbbk},{\Bbbk})=0$,

$\bullet$ each
$\text{Tor}^A_i({\Bbbk},{\Bbbk})$
is of finite type, $i\in\mathbb{Z}$,

$\bullet$ the morphism of right $\text{Ext}^*_A({\Bbbk},{\Bbbk})$-modules
$$
Ext^p_A({\Bbbk},{\Bbbk})\buildrel{\cong}\over\rightarrow
\text{Tor}^A_{d-p}({\Bbbk},{\Bbbk}), a\mapsto [m]\cap a
$$
is an isomorphism.

Then for any $A$-bimodule $N$,
 the morphism %(of right $HH^*(A,A)$-modules)

$$
\mathbb{D}^{-1}:HH^p(A,N)\buildrel{\cong}\over\rightarrow
HH_{d-p}(A,N), a\mapsto c\cap a
$$
is also an isomorphism.
\end{statement}
This statement is the Eckmann-Hilton or Koszul dual
of~\cite[Proposition 11]{Menichi:BV_Hochschild}.
In this section, we will prove this statement if $A$ is connected.
But we wonder if this statement is true  in the non-connected case or even for ungraded
algebras.
\begin{propriete}\label{produit tensoriel et dual}
Let $B$ and $N$ be two complexes.
Consider the natural morphism of complexes
$\Theta:B^\vee\otimes N\rightarrow \text{Hom}(B,N)$, which sends $\varphi\otimes n$
to the linear map $f:B\rightarrow N$ defined by $f(b):=\varphi(b)n$.
Suppose that each $B_i$ is ${\Bbbk}$-free.

1) If $B_i=0$ for all $i\leq -n$ and for all $i\geq n$, for some positive integer $n$
and if each  $B_i$ is of finite type or

2) If $H_i(B)=0$ for all $i\leq -n$ and for all $i\geq n$, for some positive integer $n$
and if each  $H_i(B)$ is of finite type

\noindent then $\Theta$ is a homotopy equivalence.
\end{propriete}
\begin{proof}
1) Since $B$ is bounded, the component of degre $n$ of
$\text{Hom}(B,N)$ is the direct sum $\oplus_{q\in\mathbb{Z}}\text{Hom}(B_{q-n},N_q)$.
Since $B_{q-n}$ is free of finite type, $\text{Hom}(B_{q-n},N_q)$ is isomorphic
to $B_{q-n}^\vee\otimes N_q$. Therefore $\Theta$ is an isomorphism.

\noindent 2) Since ${\Bbbk}$ is a principal ideal domain,
the proof of~\cite[Lemma 5.5.9]{Spanier:livre} shows that
there exists an complex $B'$ satisfying 1) homotopy equivalent to $B$.
By naturality of $\Theta$, $\Theta$ is a homotopy equivalence of complexes.
\end{proof}
\begin{lem}\label{cas du bimodule trivial}
The statement holds whenever $N$ is a trivial $A$-bimodule, i.e. 
$a.n=\varepsilon(a)n=n.a$ for $a\in A$ and $n\in N$.
\end{lem}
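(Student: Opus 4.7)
The plan is to reduce the trivial-coefficient case to the $N={\Bbbk}$ case, which is the very hypothesis of Statement~\ref{iso donne par cap}, and then to propagate the resulting isomorphism to arbitrary trivial $N$ via the K\"unneth formula. First I would exploit the triviality of $N$: because $a\cdot n=\varepsilon(a)n=0=n\cdot a$ for $a\in\overline{A}$, the Hochschild differentials simplify and one obtains natural identifications of complexes
$$
\overline{\mathcal{C}}_*(A,N)\;=\;N\otimes\overline{B}(A),\qquad
\overline{\mathcal{C}}^*(A,N)\;=\;\mbox{Hom}(\overline{B}(A),N).
$$
Applying naturality of the cap product in its first argument to the augmentation $\varepsilon:A\to{\Bbbk}$, together with the identifications $A\otimes_A N\cong N\cong{\Bbbk}\otimes_A N$ (valid because $N$ is trivial), one obtains $c\cap f=[m]\cap f$ in $HH_*(A,N)$. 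So it suffices to prove that the chain-level cap with a cycle $m\in\overline{B}(A)$ representing $[m]$ induces an isomorphism $H^p(\mbox{Hom}(\overline{B}(A),N))\to H_{d-p}(N\otimes\overline{B}(A))$.

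By Property~\ref{produit tensoriel et dual}(2), applied to $\overline{B}(A)$ using that $H_i(\overline{B}(A))=\text{Tor}^A_i({\Bbbk},{\Bbbk})$ is bounded and of finite type by hypothesis, the morphism $\Theta:\overline{B}(A)^\vee\otimes N\to\mbox{Hom}(\overline{B}(A),N)$ is a homotopy equivalence. An unwinding of the cap formula~(\ref{cap Hochschild}) using the deconcatenation coproduct of $T(s\overline{A})$ shows that, modulo the signed swap $\overline{B}(A)\otimes N\cong N\otimes\overline{B}(A)$, the cap map $m\cap-$ corresponds through $\Theta$ to the tensored map $(m\cap-)\otimes 1_N:\overline{B}(A)^\vee\otimes N\to\overline{B}(A)\otimes N$. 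The problem thus reduces to showing that this tensored map is a quasi-isomorphism.

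For this I would apply K\"unneth. The proof of Property~\ref{produit tensoriel et dual}(2) furnishes a bounded complex $C'$ of free finite-type ${\Bbbk}$-modules with $C'\simeq\overline{B}(A)$; dualizing gives $(C')^\vee\simeq\overline{B}(A)^\vee$, still bounded and free of finite type. Both $\overline{B}(A)$ and $(C')^\vee$ are therefore complexes of free ${\Bbbk}$-modules, so the K\"unneth theorem produces short exact sequences computing $H_*(\overline{B}(A)^\vee\otimes N)$ and $H_*(\overline{B}(A)\otimes N)$ whose outer terms are built from $\text{Ext}^*_A({\Bbbk},{\Bbbk})$, $\text{Tor}^A_*({\Bbbk},{\Bbbk})$, $N$, and the corresponding ${\Bbbk}$-Tor groups. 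The hypothesized isomorphism $[m]\cap-:\text{Ext}^p_A({\Bbbk},{\Bbbk})\cong\text{Tor}^A_{d-p}({\Bbbk},{\Bbbk})$ together with naturality of K\"unneth pairs the outer terms of the two sequences into isomorphisms, and the five lemma delivers the isomorphism on the middle. The main technical obstacle is that $\overline{B}(A)^\vee$ is typically not a complex of free ${\Bbbk}$-modules, since $\overline{B}(A)$ need not be of finite type in each degree, so K\"unneth does not apply on the nose; the finite-type boundedness of $\text{Tor}^A_*({\Bbbk},{\Bbbk})$ in the hypotheses, via Property~\ref{produit tensoriel et dual}(2), is precisely what lets us replace $\overline{B}(A)$ by the well-behaved $C'$ and so close the argument.
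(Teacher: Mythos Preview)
Your proof is correct and follows essentially the same route as the paper: identify $\overline{\mathcal{C}}_*(A,N)=\overline{B}(A)\otimes N$ and $\overline{\mathcal{C}}^*(A,N)=\mbox{Hom}(\overline{B}(A),N)$, use Property~\ref{produit tensoriel et dual}(2) to replace the latter by $\overline{B}(A)^\vee\otimes N$, check that under $\Theta$ the cap with $c$ becomes $(m\cap-)\otimes N$, and conclude by K\"unneth. The one place you diverge is in the last step: you worry that $\overline{B}(A)^\vee$ is not ${\Bbbk}$-free and propose passing to the replacement complex $C'$. The paper avoids this detour by observing that over a principal ideal domain the dual of any module is torsion-free, hence $\overline{B}(A)^\vee$ is degreewise flat and the K\"unneth short exact sequence (in the form of~\cite[Theorem 5.3.3]{Spanier:livre}) applies to it directly; naturality of K\"unneth with respect to the chain map $m\cap-:\overline{B}(A)^\vee\to\overline{B}(A)$ and the five lemma then give the conclusion immediately. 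Your route via $C'$ also works, but you would still need to say how the map $m\cap-$ transports to a map $(C')^\vee\to C'$ compatible with the homotopy equivalences; the paper's observation sidesteps this bookkeeping entirely.
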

\begin{proof}
Since $N$ is a trivial $A$-bimodule, the normalized Hochschild chain complex
$\overline{\mathcal{C}}_*(A,N)$ is just the tensor product of complexes
$\overline{\mathcal{C}}_*(A,{\Bbbk})\otimes N=\overline{B}(A)\otimes
N$
(This is also true for the unnormalized Hochschild chain complex, but
it is less obvious).
And the normalized Hochschild cochain complex
$\overline{\mathcal{C}}^*(A,N)$ is just the Hom complex
$\text{Hom}(\overline{\mathcal{C}}_*(A,{\Bbbk}),N)
=\text{Hom}(\overline{B}(A),N)$.
Since the augmentation ideal of $A$, $\overline{A}$, is ${\Bbbk}$-free,
$\overline{B}(A)$ is also  ${\Bbbk}$-free.
Each $H_i(\overline{B}(A))$ is of finite type and
$H_i(\overline{B}(A))=\text{Tor}_i^A({\Bbbk},{\Bbbk})$
is null if $i\leq -n $ or $i\geq n$.
Therefore by part 2) of Property~\ref{produit tensoriel et dual},
$\Theta:\overline{B}(A)^\vee\otimes N\buildrel{\simeq}\over\rightarrow \text{Hom}(\overline{B}(A),N)$
is a quasi-isomorphism.
A straightforward calculation shows that the following diagram commutes

$$
\xymatrix{
\overline{B}(A)^\vee\otimes N\ar[r]^-\Theta_-\simeq\ar[dr]_{([m]\cap -)\otimes N}
& \text{Hom}(\overline{B}(A),N)=\overline{\mathcal{C}}^*(A,N)\ar[d]^{c\cap -}\\
& \overline{B}(A)\otimes N=\overline{\mathcal{C}}_*(A,N)
}
$$
Since $\overline{B}(A)$ is ${\Bbbk}$-free and its dual $\overline{B}(A)^\vee$
is torsion free,
by naturality of Kunneth formula~\cite[Theorem 5.3.3]{Spanier:livre}, $([m]\cap -)\otimes N$ is a quasi-isomorphism.
Therefore $c\cap -$ is also a quasi-isomorphism.
\end{proof}
\begin{proposition}\label{quasi-iso du cap apres limite inverse}
Let $A$ be an augmented differential graded algebra.
Let $N$ be an $A$-bimodule. And let $c\in HH_d(A,A)$ satisfying the
hypotheses of Statement~\ref{iso donne par cap}.
For any $k\geq 0$,
let $F^k:=\overline{A^e}^k.N$.
Then taking the inverse limit of the cap product with $c$ induces
a quasi-isomorphism of complexes
$$
\lim_\leftarrow c\cap -:\lim_\leftarrow \mathcal{C}^*(A,N/F^k)\buildrel{\simeq}\over\rightarrow \lim_\leftarrow
\mathcal{C}_*(A,N/F^k).
$$
\end{proposition}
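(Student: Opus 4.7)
The plan is to filter the bimodule $N$ by the powers $F^k=\overline{A^e}^k\cdot N$ of the augmentation ideal of $A^e$, reduce the successive subquotients to the trivial-bimodule case handled by Lemma~\ref{cas du bimodule trivial}, and then pass to the inverse limit by means of a Milnor $\lim^1$ short exact sequence.

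First, I observe that each subquotient $F^k/F^{k+1}$ is annihilated by $\overline{A^e}$, so it is a trivial $A$-bimodule; Lemma~\ref{cas du bimodule trivial} therefore furnishes a quasi-isomorphism $c\cap -:\mathcal{C}^*(A,F^k/F^{k+1})\to\mathcal{C}_*(A,F^k/F^{k+1})$ (with the usual degree shift by $d$). An induction on $k\geq 0$ then shows that $c\cap -:\mathcal{C}^*(A,N/F^k)\to\mathcal{C}_*(A,N/F^k)$ is a quasi-isomorphism for every $k$. The base case $k=0$ is trivial since $N/F^0=0$. For the inductive step, the short exact sequence of bimodules
$$0\to F^{k-1}/F^k\to N/F^k\to N/F^{k-1}\to 0$$
yields short exact sequences of Hochschild (co)chain complexes, because $\overline{B}(A)$ is ${\Bbbk}$-free so both $\overline{B}(A)\otimes -$ and $\mathrm{Hom}(\overline{B}(A),-)$ are exact in the bimodule variable. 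Naturality of $c\cap -$ gives a morphism of these short exact sequences of complexes; the five lemma applied to the induced long exact sequences in (co)homology, combined with the trivial-bimodule case and the induction hypothesis, closes the induction.

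Finally, to pass to the inverse limit, I would use that the transition maps $N/F^{k+1}\twoheadrightarrow N/F^k$ are surjective, so that the transitions in the inverse systems $\{\mathcal{C}^*(A,N/F^k)\}_k$ and $\{\mathcal{C}_*(A,N/F^k)\}_k$ are degreewise surjective and in particular satisfy the Mittag-Leffler condition. This guarantees that the Milnor short exact sequence
$$0\to{\lim_\leftarrow}^1 H_{*+1}(\mathcal{C}(A,N/F^k))\to H_*\bigl(\lim_\leftarrow\mathcal{C}(A,N/F^k)\bigr)\to\lim_\leftarrow H_*(\mathcal{C}(A,N/F^k))\to 0$$
holds for both systems. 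Since each $c\cap -$ is a quasi-isomorphism, the induced maps on both $\lim H_*$ and ${\lim}^1 H_{*+1}$ are isomorphisms, and the five lemma on the two Milnor sequences yields the quasi-isomorphism $\lim_\leftarrow c\cap -$ asserted in the statement. The main obstacle, precisely as the acknowledgment highlights, is this last step: one must justify carefully that degreewise surjectivity of the transitions is enough to invoke the Milnor $\lim^1$ sequence in this (bi)graded setting, and then piece the five-lemma argument together compatibly on the cohomological and on the homological side.
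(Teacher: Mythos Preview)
Your proposal is correct and follows essentially the same route as the paper: reduce to the trivial-bimodule case on each subquotient $F^k/F^{k+1}$ via Lemma~\ref{cas du bimodule trivial}, climb the tower $N/F^k$ by induction on $k$ using the short exact sequences $0\to F^{k-1}/F^k\to N/F^k\to N/F^{k-1}\to 0$ and the five lemma, and then pass to the inverse limit with the Milnor $\lim^1$ sequence (the paper cites~\cite[Theorem 3.5.8]{Weibel:inthomalg}), invoking Mittag--Leffler from the degreewise surjectivity of the transitions. The only cosmetic differences are that the paper works with the unnormalized $T(sA)$ rather than $\overline{B}(A)$ when checking exactness, and writes the short exact sequence with indices shifted by one.
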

\begin{proof}
Consider the augmentation ideal $\overline{A^e}$ of the envelopping algebra
$A^e$.
For any $k\geq 0$, let $\overline{A^e}^k$ be the image of the iterated tensor
product $\overline{A^e}^{\otimes k}$ by the iterated multiplication of
$A^e$, $\mu: (A^e)^{\otimes k}\rightarrow A^e$ and let
$F^k$ be the image of $\overline{A^e}^k\otimes N$ by the action
$A^e\otimes N\rightarrow N$.

The $F^k$ form a decreasing filtration of sub-$A$-bimodules and sub-complexes of $N$.
Since $F^k/F^{k+1}$ is a trivial $A$-bimodule, by Lemma~\ref{cas du bimodule trivial},
the morphism of complexes%of left $\mathcal{C}^*(A,A)$-modules
$$
\mathcal{C}^*(A,F^k/F^{k+1})\buildrel{\simeq}\over\rightarrow
\mathcal{C}_*(A,F^k/F^{k+1}), a\mapsto c\cap a
$$
is a quasi-isomorphism.
By Noether theorem, we have the short exact sequences of
$A$-bimodules

$$0\rightarrow F^k/F^{k+1}\rightarrow N/F^{k+1} \rightarrow N/F^{k}\rightarrow 0.
$$
%Since $Ts\overline{A}$ is ${\Bbbk}$-free, the functors $\text{Hom}_{\Bbbk}(Ts\overline{A},-)$
Since $TsA$ is ${\Bbbk}$-free, the functors $\text{Hom}_{\Bbbk}(TsA,-)$
and $-\otimes_{\Bbbk}TsA$ preserve short exact sequences.
Therefore consider the morphism of short exact sequences of complexes induced by the cap product with $c$
$$
\xymatrix{
0\ar[r]
&\mathcal{C}^*(A,F^k/F^{k+1})\ar[r]\ar[d]^{c\cap -}_{\simeq}
&\mathcal{C}^*(A,N/F^{k+1})\ar[r]\ar[d]^{c\cap -}
& \mathcal{C}^*(A,N/F^{k})\ar[r]\ar[d]^{c\cap -}
&0\\
0\ar[r]
&\mathcal{C}_*(A,F^k/F^{k+1})\ar[r]
&\mathcal{C}_*(A,N/F^{k+1})\ar[r]
& \mathcal{C}_*(A,N/F^{k})\ar[r]
&0
}
$$
Using the long exact sequences associated and the five lemma, by induction on $k$,
we obtain that the morphism of complexes%of left $\mathcal{C}^*(A,A)$-modules
$$
\mathcal{C}^*(A,N/F^{k})\buildrel{\simeq}\over\rightarrow
\mathcal{C}_*(A,N/F^{k}), a\mapsto c\cap a
$$
is a quasi-isomorphism for all $k\geq 0$.

The two towers of complexes

$$
\cdots\twoheadrightarrow  \mathcal{C}^*(A,N/F^{k+1})\twoheadrightarrow \mathcal{C}^*(A,N/F^{k})\twoheadrightarrow\cdots
$$
$$
\cdots\twoheadrightarrow  \mathcal{C}_*(A,N/F^{k+1})\twoheadrightarrow \mathcal{C}_*(A,N/F^{k})\twoheadrightarrow\cdots
$$
satisfy the trivial Mittag-Leffler condition, since all the maps in the two towers
are onto. Therefore by naturality of~\cite[Theorem 3.5.8]{Weibel:inthomalg},
for each $p\in\mathbb{Z}$, we have the 
 morphism of short exact sequences induced by the cap product with $c$
%a verifier naturalite et
%marche pour les complexes quelquonques, car un complexe de chaine au sens de Weibel est gradue par
% par les entiers relatifs.
%{\footnotesize
$$
\xymatrix{
\displaystyle{\lim_\leftarrow}^1 HH^{p-1}(A,N/F^{k})\ar[r]\ar[d]^{\displaystyle{\lim_\leftarrow}^1 c\cap -}_\cong
&H^{p}\displaystyle{\lim_\leftarrow} \mathcal{C}^*(A,N/F^{k})\ar[r]\ar[d]^{H(\displaystyle\lim_\leftarrow c\cap -)}
& \displaystyle{\lim_\leftarrow} HH^p(A,N/F^{k})\ar[d]^{\displaystyle\lim_\leftarrow c\cap -}_\cong\\
\displaystyle{\lim_\leftarrow}^1 HH_{d+1-p}(A,N/F^{k})\ar[r]
&H_{d-p}\displaystyle{\lim_\leftarrow}\mathcal{C}_*(A,N/F^{k})\ar[r]
& \displaystyle{\lim_\leftarrow} HH_{d-p}(A,N/F^{k})
}
$$%}
Using the five Lemma again, we obtain that the middle morphism

$$
H(\lim_\leftarrow c\cap -):
H^{p}\lim_\leftarrow \mathcal{C}^*(A,N/F^{k})\rightarrow
H_{d-p}\lim_\leftarrow \mathcal{C}_*(A,N/F^{k})
$$
is an isomorphism.
\end{proof}
\begin{cor}\label{cas simplement connexe}
The statement is true if $A$ and $N$ are non-negatively lower graded and
$H_0(\varepsilon):H_0(A)\buildrel{\cong}\over\rightarrow {\Bbbk}$
is an isomorphism.
\end{cor}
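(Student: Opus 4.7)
My plan is to deduce Corollary~\ref{cas simplement connexe} from Proposition~\ref{quasi-iso du cap apres limite inverse} by identifying the two inverse limits $\lim_\leftarrow \mathcal{C}^*(A, N/F^k)$ and $\lim_\leftarrow \mathcal{C}_*(A, N/F^k)$ with the ordinary Hochschild complexes $\mathcal{C}^*(A, N)$ and $\mathcal{C}_*(A, N)$. Granting this, Proposition~\ref{quasi-iso du cap apres limite inverse} immediately becomes the statement that $c \cap - : \mathcal{C}^*(A, N) \to \mathcal{C}_*(A, N)$ is a quasi-isomorphism, which is exactly Statement~\ref{iso donne par cap} on passing to homology.

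\textbf{Behaviour of the filtration.} The key point is that the connectedness assumption forces $F^k$ to vanish in low lower degrees. After replacing, if necessary, $A$ by a quasi-isomorphic augmented DGA with $A_0 = {\Bbbk}$ (a move that is legitimate because both the hypotheses and the conclusion of Statement~\ref{iso donne par cap} are quasi-isomorphism invariants of augmented DGAs), the augmentation ideal $\overline{A^e}$ of the enveloping algebra is concentrated in lower degrees $\geq 1$. Consequently $\overline{A^e}^k$ sits in degrees $\geq k$, and since $N$ is non-negatively graded one has $F^k = \overline{A^e}^k \cdot N \subseteq N_{\geq k}$. For each fixed $m \in \mathbb{N}$, the tower $\{(N/F^k)_m\}_k$ is therefore constant equal to $N_m$ as soon as $k > m$, so in particular $N \cong \lim_\leftarrow N/F^k$.

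\textbf{Stabilization of the Hochschild complexes.} On the chain side, since $sA$ lives in lower degree $\geq 1$, in each fixed total degree $n$ the complex $\mathcal{C}_n(A, N) = (N \otimes T(sA))_n$ is a \emph{finite} direct sum involving only coefficients $N_m$ with $m \leq n$. Combined with the previous paragraph, the tower $\{\mathcal{C}_n(A, N/F^k)\}_k$ is eventually constant, which gives $\mathcal{C}_*(A, N) \cong \lim_\leftarrow \mathcal{C}_*(A, N/F^k)$ as complexes. On the cochain side, $\mathcal{C}^*(A, -) = \mbox{Hom}(T(sA), -)$ commutes with inverse limits in the coefficient variable, whence $\mathcal{C}^*(A, N) \cong \lim_\leftarrow \mathcal{C}^*(A, N/F^k)$. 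Combining these two identifications with Proposition~\ref{quasi-iso du cap apres limite inverse} concludes the proof.

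\textbf{Main obstacle.} The only step demanding genuine justification is the initial reduction to the case $A_0 = {\Bbbk}$: one must verify that the hypotheses of Statement~\ref{iso donne par cap} (the finiteness and boundedness conditions on $\mbox{Tor}^A_*({\Bbbk}, {\Bbbk})$, together with the Poincar\'e duality isomorphism given by cap product with $[m]$) and the Van den Bergh type conclusion that $c \cap -$ is an isomorphism all transfer along a quasi-isomorphism of augmented DGAs. This is analogous in spirit to the homotopy invariance result announced in Section~7, and I would expect to prove it by a standard comparison of Hochschild (co)chain complexes along a quasi-isomorphism $A \xrightarrow{\simeq} A'$ with $A'_0 = {\Bbbk}$. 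Once this reduction is granted, the filtration argument above is routine.
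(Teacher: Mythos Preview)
Your proposal is correct and follows essentially the same two-case structure as the paper: first handle the strictly connected case $A_0={\Bbbk}$ via the filtration/inverse-limit argument you describe, then reduce the general case to it through a quasi-isomorphism with a connected model. For your ``main obstacle'', the paper's reduction is concrete rather than an appeal to abstract invariance: it builds an explicit ${\Bbbk}$-free sub-DGA $\tilde A\hookrightarrow A$ with $\tilde A_0={\Bbbk}$ (take $\tilde A_1=\ker(d\colon A_1\to A_0)$ and $\tilde A_n=A_n$ for $n\ge 2$), lifts $c$ to $\tilde c\in HH_d(\tilde A,\tilde A)$, and checks by hand that the squares relating $c\cap-$ to $\tilde c\cap-$ (and $[m]\cap-$ to $[\tilde m]\cap-$) commute.
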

\begin{proof}
Case 1: We first suppose that 
$\varepsilon:A_0\buildrel{\cong}\over\rightarrow {\Bbbk}
$ is an isomorphism.
Then $\overline{A^e}^k$
is concentrated in degres $\geq k$. Therefore
 $F^k$ and $\mathcal{C}_*(A,F^{k})$ are also concentrated in degres $\geq k$.
This means that for $n<k$ their components of degre $n$, $(F^k)_n$ and
$[\mathcal{C}_*(A,F^{k})]_n$, are trivial.
Therefore the tower in degre $n$
$$\cdots\rightarrow (N/F^{k+1})_n\twoheadrightarrow (N/F^{k})_n\rightarrow\cdots$$
is constant and equal to $N_n$ for $k>n$.
This implies that  $N_n=\displaystyle{\lim_\leftarrow} (N/F^{k})_n$.
Therefore as complexes and as $A$-bimodule, $N=\displaystyle{\lim_\leftarrow} N/F^{k}$.

Since $\mathcal{C}_*(A,N/F^{k})$ is the quotient $\mathcal{C}_*(A,N)/\mathcal{C}_*(A,F^{k})$,
we also have that as complexes,
$$\mathcal{C}_*(A,N)=\lim_\leftarrow \mathcal{C}_*(A,N/F^{k})$$

The functor $\mathcal{C}^*(A,-)$ from (differential) $A$-bimodules to complexes is a right adjoint (to the functor $B(A;A;A)\otimes -$).
Therefore $\mathcal{C}^*(A,-)$ preserves inverse limits.
 Since $N=\displaystyle{\lim_\leftarrow} N/F^{k}$ in the category of (differential) $A$-bimodules,
we obtain that as complex 
$$\mathcal{C}^*(A,N)=\mathcal{C}^*(A,\displaystyle{\lim_\leftarrow} N/F^{k})=\lim_\leftarrow \mathcal{C}^*(A,N/F^{k}).$$
Since for any $k\geq 0$, the following square commutes
$$
\xymatrix{
\mathcal{C}^*(A,N)\ar[r]\ar[d]^{c\cap -}
&\mathcal{C}^*(A,N/F^{k})\ar[d]^{c\cap -}\\
\mathcal{C}_*(A,N)\ar[r]
&\mathcal{C}_*(A,N/F^{k})},
$$
the quasi-isomorphism given by Proposition~\ref{quasi-iso du cap apres limite inverse}
$$
\lim_\leftarrow c\cap -:\lim_\leftarrow \mathcal{C}^*(A,N/F^k)\buildrel{\simeq}\over\rightarrow \lim_\leftarrow
\mathcal{C}_*(A,N/F^k)
$$
coincides with
 $c\cap -:\mathcal{C}^*(A,N)   \rightarrow \mathcal{C}_*(A,N)$.

Case 2: Now, we only suppose that
$H_0(\varepsilon):H_0(A)\buildrel{\cong}\over\rightarrow{\Bbbk}$
is an isomorphism. Let $\tilde{A}$ be the graded ${\Bbbk}$-module
defined by $\tilde{A}_0={\Bbbk}$,
$\tilde{A}_1=\text{Ker}\;d:A_1\rightarrow A_0$, $\tilde{A}_n= A_n$ for $n\geq 2$
(Compare with the upper graded version in~\cite[p. 184]{Felix-Halperin-Thomas:ratht}).
Clearly $\tilde{A}$ is a ${\Bbbk}$-free
sub differential graded algebra of $A$ and the
inclusion $j:\tilde{A}\buildrel{\simeq}\over\hookrightarrow A$
is a quasi-isomorphism since $\text{Im}\;(d:A_1\rightarrow A_0)$ is equal to $\overline{A}_0$.

Since
the augmentation ideals of $A$ and $\tilde{A}$, $\overline{A}$ and $\overline{\tilde{A}}$,
are ${\Bbbk}$-free and non-negatively lower graded, by~\cite[5.3.5]{LodayJ.:cych} or~\cite[4.3(iii)]{Felix-Halperin-Thomas:dgait},
the three morphisms $HH_*(j,N):HH_*(\tilde{A},N)\buildrel{\cong}\over\rightarrow HH_*(A,N)$,
 $HH^*(j,N):HH^*(A,N)\buildrel{\cong}\over\rightarrow HH^*(\tilde{A},N)$ and
$HH_*(j,j):HH_*(\tilde{A},\tilde{A})\buildrel{\cong}\over\rightarrow HH_*(A,A)$ are all isomorphims.
Let $\tilde{c}\in HH_d(\tilde{A},\tilde{A})$ such that
$HH_d(j,j)(\tilde{c})=c$. Using the definition of the cap product, it is
straightforward to check that the following square commutes
$$
\xymatrix{
HH^*(A,N)\ar[r]^{HH^*(j,N)}_\cong\ar[d]^{c\cap -}
&HH^*(\tilde{A},N)\ar[d]^{\tilde{c}\cap -}\\
HH_*(A,N)
&HH_*(\tilde{A},N)\ar[l]^{HH_*(j,N)}_\cong
}
$$
Let $\tilde{[m]}\in\text{Tor}^{\tilde{A}}_d({\Bbbk},{\Bbbk})$
such that $\text{Tor}^{j}_d({\Bbbk},{\Bbbk})(\tilde{[m]})=[m]$.
When $N={\Bbbk}$, the previous square specializes to the following commutative
square

$$
\xymatrix{
\text{Ext}^*_A({\Bbbk},{\Bbbk})\ar[r]^{\text{Ext}^*_j({\Bbbk},{\Bbbk})}_\cong\ar[d]_{[m]\cap -}^\cong
&\text{Ext}^*_{\tilde{A}}({\Bbbk},{\Bbbk})\ar[d]^{\tilde{[m]}\cap -}\\
\text{Tor}^A_*({\Bbbk},{\Bbbk})
&\text{Tor}_*^{\tilde{A}}({\Bbbk},{\Bbbk})\ar[l]^{\text{Tor}_*^j({\Bbbk},{\Bbbk})}_\cong
}
$$
By hypothesis, $[m]\cap -$ is an isomorphism. Therefore $\tilde{[m]}\cap -$
is also an isomorphism. So since $\tilde{A}_0={\Bbbk}$, we have seen 
in the case 1, that

$$
\tilde{c}\cap-:HH^*(\tilde{A},N)\buildrel{\cong}\over\rightarrow HH_*(\tilde{A},N)
$$
is an isomorphism. Therefore
$$
c\cap-:HH^*(A,N)\buildrel{\cong}\over\rightarrow HH_*(A,N)
$$
is also an isomorphism.
\end{proof}
\begin{cor}\label{cas cochain connexe}
The statement is true if $A$ and $N$ are non-negatively upper graded,
$H^0(\varepsilon):H^0(A)\buildrel{\cong}\over\rightarrow {\Bbbk}$
is an isomorphism and ${\Bbbk}$ is a field.
\end{cor}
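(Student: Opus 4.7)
The plan is to mimic the two-case proof of Corollary~\ref{cas simplement connexe}, substituting upper degrees for lower degrees throughout. The field hypothesis on ${\Bbbk}$ enters only in the reduction step to the case $A^0={\Bbbk}$.

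First I would handle the case where $A^0={\Bbbk}$. Then $\overline{A}$ is concentrated in upper degrees $\geq 1$, so $\overline{A^e}^k$ lives in upper degrees $\geq k$, and the filtration term $F^k=\overline{A^e}^k\cdot N$ satisfies $(F^k)^n=0$ for all $n<k$. For each fixed upper degree $n$, the tower $\{(N/F^k)^n\}$ is therefore eventually constant and equal to $N^n$, giving $N=\lim_\leftarrow N/F^k$. The same verification applied in each fixed upper degree of $\mathcal{C}_*(A,N/F^k)$ yields $\mathcal{C}_*(A,N)=\lim_\leftarrow \mathcal{C}_*(A,N/F^k)$, while $\mathcal{C}^*(A,-)$, being right adjoint to $B(A;A;A)\otimes_{A^e}-$, preserves inverse limits and so $\mathcal{C}^*(A,N)=\lim_\leftarrow \mathcal{C}^*(A,N/F^k)$. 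Proposition~\ref{quasi-iso du cap apres limite inverse} then supplies the desired quasi-isomorphism $c\cap-:\mathcal{C}^*(A,N)\to \mathcal{C}_*(A,N)$.

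Second, I would reduce the general case to the previous one by constructing a sub-dga $j:\tilde{A}\hookrightarrow A$ which is a quasi-isomorphism with $\tilde{A}^0={\Bbbk}$. Write $A^0={\Bbbk}\oplus J$ where $J=\overline{A}^0$; the hypothesis $H^0(\varepsilon)$ iso forces $d:J\hookrightarrow A^1$ to be injective. Using that ${\Bbbk}$ is a field, I would first choose a complement $K$ of $d(J)$ inside the cocycles $\ker(d:A^1\to A^2)$, then extend to a complement $C\supseteq K$ of $d(J)$ in all of $A^1$, and set $\tilde{A}^0={\Bbbk}$, $\tilde{A}^1=C$, $\tilde{A}^n=A^n$ for $n\geq 2$. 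A direct check (closure under $d$ and under the product, followed by a degree-by-degree computation of cohomology using $A^1=d(J)\oplus C$ and $\ker d|_{A^1}=d(J)\oplus K$) shows that $\tilde{A}$ is a sub-dga and that $j$ is a quasi-isomorphism. One then transfers $c$ to $\tilde{c}\in HH_d(\tilde{A},\tilde{A})$ and $[m]$ to $\widetilde{[m]}\in \text{Tor}^{\tilde{A}}_d({\Bbbk},{\Bbbk})$ via the invariance of Hochschild (co)homology under $j$ (automatic over a field since the augmentation ideals are automatically ${\Bbbk}$-free), and uses naturality of the cap product, exactly as in Case 2 of the proof of Corollary~\ref{cas simplement connexe}, to reduce the statement for $(A,N)$ to the statement for $(\tilde{A},N)$ settled in Case 1.

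The main obstacle is the construction of $\tilde{A}$: unlike the lower-graded setting, where $\tilde{A}_1=\ker d$ is canonical and automatically free over a PID as a submodule of $A_1$, the upper-graded analog is not canonical and requires choosing compatible direct complements $K\subseteq C$ of $d(J)$ in $A^1$. The existence of these splittings is exactly what forces the hypothesis that ${\Bbbk}$ be a field.
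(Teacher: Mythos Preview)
Your overall strategy matches the paper's exactly, and your Case~2 is correct; you spell out explicitly the construction of $\tilde A\hookrightarrow A$ that the paper obtains by citing \cite[p.~184]{Felix-Halperin-Thomas:ratht}.

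There is, however, a genuine gap in your Case~1, and it is precisely the one point the paper singles out. You write that ``the same verification applied in each fixed upper degree of $\mathcal{C}_*(A,N/F^k)$ yields $\mathcal{C}_*(A,N)=\lim_\leftarrow \mathcal{C}_*(A,N/F^k)$.'' This verification would require $[\mathcal{C}_*(A,F^k)]^n=0$ for $n<k$, and that fails for the \emph{unnormalized} Hochschild chain complex: since $A^0={\Bbbk}$, the element $s1\in sA$ sits in upper degree $-1$, so $T(sA)$ has nonzero components in arbitrarily negative upper degree, and $F^k\otimes T(sA)$ is not concentrated in upper degrees $\geq k$. (This phenomenon does not occur in the lower-graded case of Corollary~\ref{cas simplement connexe}, where $sA$ lives in lower degrees $\geq 1$.) The fix, which the paper states explicitly, is to run the entire Case~1 argument---and implicitly Proposition~\ref{quasi-iso du cap apres limite inverse}---with the \emph{normalized} complexes $\overline{\mathcal{C}}_*$ and $\overline{\mathcal{C}}^*$: when $A^0={\Bbbk}$ one has $\overline{A}=A^{\geq 1}$, so $s\overline{A}$ and $T(s\overline{A})$ are in non-negative upper degrees, and then $\overline{\mathcal{C}}_*(A,F^k)=F^k\otimes T(s\overline{A})$ is indeed concentrated in upper degrees $\geq k$. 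Once you make this substitution, your argument goes through verbatim.
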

\begin{proof}
Case 1: We first suppose that
$\varepsilon:A^0\buildrel{\cong}\over\rightarrow {\Bbbk}$ is an isomorphism.
Since $T(sA)$ has non-trivial elements of negative degres,
we need to use the normalized Hochschild chain and cochain complexes
$\overline{\mathcal{C}}_*$ and $\overline{\mathcal{C}}^*$ instead of the unnormalized
$\mathcal{C}_*$ and $\mathcal{C}^*$. Now the proof is the same as in Case 1
of the proof of Corollary~\ref{cas simplement connexe}.

Case 2: Now, we only suppose that that
$H^0(\varepsilon):H^0(A)\buildrel{\cong}\over\rightarrow {\Bbbk}$
is an isomorphism. Since ${\Bbbk}$ is a field, by~\cite[p. 184]{Felix-Halperin-Thomas:ratht}), there exists a differential graded
algebra  $\tilde{A}$, non-negatively upper graded,
equipped with a quasi-isomorphism
$j:\tilde{A}\buildrel{\simeq}\over\rightarrow A$ such that
$\tilde{A}^0={\Bbbk}$.
Now the rest of the proof is exactly the same as in Case 2
of the proof of Corollary~\ref{cas simplement connexe}.
\end{proof}
\section{Comparison of the Cap products in Hochschild and group (co)homology}
Let $G$ be a discrete group.
Let $M$ and $N$ be two ${\Bbbk}[G]$-bimodules.
Let $\eta:{\Bbbk}\rightarrow {\Bbbk}[G]$ be the unit map.
Let $E: {\Bbbk}[G]\rightarrow {\Bbbk}[G\times G^{op}]$ be
the morphism of algebras mapping $g$ to $(g,g^{-1})$.
Let $$\tilde{\eta}: {\Bbbk}[G\times G^{op}]\otimes_{{\Bbbk}[G]}{\Bbbk}
\rightarrow {\Bbbk}[G]$$
be the unique morphism of left ${\Bbbk}[G\times G^{op}]$-modules
extending $\eta$. Since ${\Bbbk}[G\times G^{op}]$ is flat as
left $ {\Bbbk}[G]$-module via $E$ and since $\tilde{\eta}$
is an isomorphism, by Eckmann-Schapiro~\cite[Chapt IV.Proposition 12.2]{Hilton-Stammbach}, we obtain the well-known isomorphisms between Hochschild (co)homology
and group (co)homology:
$$
\text{Ext}_E^*(\eta,N):HH^*({\Bbbk}[G],N)=
\text{Ext}_{{\Bbbk}[G\times G^{op}]}^*({\Bbbk}[G],N)
\buildrel{\cong}\over\rightarrow
\text{Ext}_{{\Bbbk}[G]}^*({\Bbbk},\tilde{N})=H^*(G,\tilde{N}).
$$
and
$$\text{Tor}^E_*(M,\eta):
H_*(G,\tilde{M})=\text{Tor}^{{\Bbbk}[G]}_*(\tilde{M},{\Bbbk})
\buildrel{\cong}\over\rightarrow\text{Tor}_{{\Bbbk}[G\times G^{op}]}^*(M,{\Bbbk}[G])=HH_*({\Bbbk}[G],M).
$$
Here $\tilde{M}$ and $\tilde{N}$ denote the ${\Bbbk}[G]$-modules
obtained by restriction of scalar via $E$.
Note that we regard any left ${\Bbbk}[G]$-module as an right ${\Bbbk}[G]$-module
via $g\mapsto g^{-1}$~\cite[p. 55]{Brown:cohgro}.
\begin{proposition}\label{comparaison cup cap Hochschild group cohomology}
Remark that the canonical surjection
$$q:\tilde{M}\otimes\tilde{N}\twoheadrightarrow \widetilde{M\otimes_{{\Bbbk}[G]}N}$$
is a morphism of ${\Bbbk}[G]$-modules, since $q(gmg^{-1}\otimes gng^{-1})=gm\otimes ng^{-1}$.

i) Cup product $\cup$ in Hochschild cohomology versus cup product in group
cohomology (slight extension of~\cite[Proposition 3.1]{Siegel-Witherspoon:Hochschildcoh}).
The following diagram commutes

\xymatrix{
HH^*({\Bbbk}[G],M)\otimes HH^*({\Bbbk}[G],N)
\ar[rr]^\cup \ar[d]|{\text{Ext}_E^*(\eta,M)\otimes\text{Ext}_E^*(\eta,N)}
&& HH^*({\Bbbk}[G],M\otimes_{{\Bbbk}[G]}N)
\ar[d]_{\text{Ext}_E^*(\eta,M\otimes_{{\Bbbk}[G]}N)}\\
H^*(G,\tilde{M})\otimes H^*(G,\tilde{N})\ar[r]_\cup
& H^*(G,\tilde{M}\otimes\tilde{N})\ar[r]_{H^*(G,q)}
&H^*(G,\widetilde{M\otimes_{{\Bbbk}[G]}N})
}

ii) Cap products $\cap$.
The following diagram commutes
$$
\xymatrix{
HH_*({\Bbbk}[G],M)\otimes HH^*({\Bbbk}[G],N)\ar[rr]^\cap
&& HH_*({\Bbbk}[G],M\otimes_{{\Bbbk}[G]}N)\\
H_*(G,\tilde{M})\otimes H^*(G,\tilde{N})\ar[r]_\cap
\ar[u]|{\text{Tor}^E_*(M,\eta)\otimes\text{Ext}_E^*(\eta,N)^{-1}}
& H_*(G,\tilde{M}\otimes\tilde{N})\ar[r]_{H_*(G,q)}
&H_*(G,\widetilde{M\otimes_{{\Bbbk}[G]}N})
\ar[u]^{\text{Tor}^E_*(M\otimes_{{\Bbbk}[G]}N,\eta)}
}
$$
\end{proposition}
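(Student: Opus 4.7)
The plan is to verify both diagrams at the chain level by choosing compatible projective resolutions. On the Hochschild side, use the two-sided bar resolution $\widetilde B := B({\Bbbk}[G];{\Bbbk}[G];{\Bbbk}[G])$, a ${\Bbbk}[G\times G^{op}]$-projective resolution of ${\Bbbk}[G]$. On the group side, use the one-sided bar resolution $P_* := B({\Bbbk}[G];{\Bbbk}[G];{\Bbbk})$, a ${\Bbbk}[G]$-projective resolution of ${\Bbbk}$. Since ${\Bbbk}[G\times G^{op}]$ is free as a right ${\Bbbk}[G]$-module via $E$, the complex ${\Bbbk}[G\times G^{op}]\otimes_{{\Bbbk}[G]}P_*$ is also a ${\Bbbk}[G\times G^{op}]$-projective resolution of ${\Bbbk}[G]$, and the tensor-Hom adjunction
\[
\mbox{Hom}_{{\Bbbk}[G\times G^{op}]}({\Bbbk}[G\times G^{op}]\otimes_{{\Bbbk}[G]}P_*,N)\;\cong\;\mbox{Hom}_{{\Bbbk}[G]}(P_*,\widetilde N)
\]
realizes $\mbox{Ext}^*_E(\eta,N)$ at the chain level, with a dual identification realizing $\mbox{Tor}^E_*(M,\eta)$.

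I would then construct an explicit ${\Bbbk}[G\times G^{op}]$-equivariant chain map $\phi:{\Bbbk}[G\times G^{op}]\otimes_{{\Bbbk}[G]}P_*\to\widetilde B$ lifting the identity on ${\Bbbk}[G]$. After identifying both sides with ${\Bbbk}[G\times G^{op}]\otimes T(s\overline{{\Bbbk}[G]})$ as graded modules, $\phi$ necessarily involves a conjugation twist forced by the $E$-action $(h,k)\cdot g=(hg,g^{-1}k)$ on ${\Bbbk}[G\times G^{op}]$; two such lifts are equivariantly homotopic, and either induces the fixed Eckmann--Shapiro isomorphisms on $\mbox{Ext}$ and $\mbox{Tor}$.

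For part (i), pull back the Hochschild cup product
\[
(f\cup g)[a_1|\cdots|a_{p+q}]= f[a_1|\cdots|a_p]\otimes_{{\Bbbk}[G]} g[a_{p+1}|\cdots|a_{p+q}]
\]
along $\phi$ and transport through the tensor-Hom adjunction. A direct calculation, tracking the conjugation twist inside $\phi$, shows that the resulting operation on $\mbox{Hom}_{{\Bbbk}[G]}(P_*,\widetilde M)\otimes\mbox{Hom}_{{\Bbbk}[G]}(P_*,\widetilde N)$ sends $[a_1|\cdots|a_{p+q}]$ to
\[
q\bigl(\widetilde f[a_1|\cdots|a_p]\otimes(a_1\cdots a_p)\cdot\widetilde g[a_{p+1}|\cdots|a_{p+q}]\bigr),
\]
where $(a_1\cdots a_p)\cdot n:=(a_1\cdots a_p)n(a_1\cdots a_p)^{-1}$ is the $\widetilde{\phantom{x}}$-action on $\widetilde N$. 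This is exactly $H^*(G,q)$ applied to the classical group-cohomology cup product coming from Alexander--Whitney and the diagonal $g\mapsto g\otimes g$ on ${\Bbbk}[G]$, which is the content of the diagram in (i).

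Part (ii) is handled by the same strategy applied to chains: combining $\phi$ with the explicit Hochschild cap product formula (\ref{cap Hochschild}), one verifies that the Hochschild cap product, pushed through the Eckmann--Shapiro identifications, agrees with the standard cap product in $H_*(G,\widetilde M\otimes\widetilde N)$ post-composed with $H_*(G,q)$. The main obstacle is cleanly setting up $\phi$ and carrying the $E$-twist by conjugation through the Alexander--Whitney formulas; these conjugations are precisely what produce both the twist $(a_1\cdots a_p)\cdot(-)$ and the surjection $q$ (recall $q(gmg^{-1}\otimes gng^{-1})=gm\otimes ng^{-1}$) on the group side. Once this matching is in hand, both diagrams reduce to combinatorial bookkeeping, and for (i) the argument directly extends that of Siegel--Witherspoon to the present bimodule setting.
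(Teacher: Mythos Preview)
Your proposal is correct and follows essentially the same route as the paper: Eckmann--Shapiro via the bar resolutions, with an explicit comparison map carrying the conjugation twist. The paper's map is the ${\Bbbk}[G]$-linear $\iota:B({\Bbbk}[G];{\Bbbk}[G];{\Bbbk})\to \widetilde{B({\Bbbk}[G];{\Bbbk}[G];{\Bbbk}[G])}$, $\iota(g_0[g_1|\cdots|g_n])=g_0[g_1|\cdots|g_n]g_n^{-1}\cdots g_0^{-1}$, which is precisely the adjoint of your $\phi$; the paper then verifies the cap diagram (ii) directly with Alexander--Whitney, exactly as you outline, and observes that $\text{Hom}_E(\iota,-)$ and $-\otimes_E\iota$ are in fact \emph{isomorphisms} of complexes (not merely quasi-isomorphisms), which slightly streamlines the bookkeeping.
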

\begin{rem} 
In the case $N={\Bbbk}[G]$~\cite[(3.3)]{Siegel-Witherspoon:Hochschildcoh}, the morphism of  ${\Bbbk}[G]$-modules
$q:\tilde{M}\otimes\tilde{{\Bbbk}[G]}\twoheadrightarrow \widetilde{M\otimes_{{\Bbbk}[G]}{\Bbbk}[G]}\cong\tilde{M}$ is simply the action $m\otimes g\mapsto m.g$.

In the case $M=N={\Bbbk}[G]$, the diagram i) in
Proposition~\ref{comparaison cup cap Hochschild group cohomology}
means that
$$
\text{Ext}_E^*(\eta,{\Bbbk}[G]):HH^*({\Bbbk}[G],{\Bbbk}[G])
\rightarrow H^*(G,\tilde{{\Bbbk}[G]})
$$
is a morphism of graded algebras.

In the case $N={\Bbbk}[G]$, the diagram ii) means that
$$\text{Tor}^E_*(M,\eta):H_*(G,\tilde{M})\rightarrow
HH_*({\Bbbk}[G],M)
$$
is a morphism of right $HH^*({\Bbbk}[G],{\Bbbk}[G])$-modules:
$$
\text{Tor}^E_*(\eta,{\Bbbk}[G])\left(\alpha\cap
\text{Ext}_E^*(\eta,{\Bbbk}[G])(\varphi)\right)=
\text{Tor}^E_*(\eta,{\Bbbk}[G])(\alpha)\cap\varphi
$$
for any $\alpha\in H_*(G,\tilde{M})$ and any
$\varphi\in HH^*({\Bbbk}[G],{\Bbbk}[G])$.
\end{rem}
\begin{proof}
Siegel and Witherspoon~\cite[Proposition 3.1]{Siegel-Witherspoon:Hochschildcoh}
proved i) using that for any projective resolution $P$ of ${\Bbbk}$
as left ${\Bbbk}[G]$-modules,
$$X:={\Bbbk}[G\times G^{op}]\otimes_{{\Bbbk}[G]}P$$ is a projective
resolution of  ${\Bbbk}[G]$ as ${\Bbbk}[G]$-bimodules.
Let $\iota:P\hookrightarrow\tilde{X}$ the left ${\Bbbk}[G]$-linear map
defined by $\iota(x)=(1,1)\otimes x$.
Using that
$$\text{Hom}_E(\iota,N):\text{Hom}_{{\Bbbk}[G\times G^{op}]}(X,N)
\buildrel{\cong}\over\rightarrow \text{Hom}_{{\Bbbk}[G]}(P,\tilde{N})$$
is an isomorphism of complexes inducing $\text{Ext}_E^*(\eta,N)$
and that 
$$
M\otimes_E\iota:\tilde{M}\otimes_{{\Bbbk}[G]}P
\buildrel{\cong}\over\rightarrow
M\otimes_{{\Bbbk}[G\times G^{op}]}X
$$ 
is an isomorphism of complexes inducing $\text{Tor}_*^E(M,\eta)$,
Siegel and Witherspoon~\cite[Proposition 3.1]{Siegel-Witherspoon:Hochschildcoh}
proved i). But one can also prove similarly ii).

We find more simple to give a proof of ii) using the Bar resolution.
Let $\iota:B({\Bbbk}[G];{\Bbbk}[G];{\Bbbk})\rightarrow 
\widetilde{B({\Bbbk}[G];{\Bbbk}[G];{\Bbbk}[G])}$
be the linear map defined by
$$\iota(g_0[g_1|\cdots|g_n])=g_0[g_1|\cdots|g_n]g_n^{-1}\dots g_0^{-1}.$$
Obviously $\iota$ fits into the commutative diagram of
left ${\Bbbk[G]}$-modules
$$
\xymatrix{
\widetilde{B({\Bbbk}[G];{\Bbbk}[G];{\Bbbk}[G])}\ar[r]
& {\Bbbk}[G]\\
B({\Bbbk}[G];{\Bbbk}[G];{\Bbbk})\ar[r]\ar@{.>}[u]^{\iota}
&{\Bbbk}\ar[u]_\eta
}$$
A straightforward computation shows that $\iota$ is a morphism
of complexes.
Therefore
$\text{Hom}_E(\iota,N)$ is an morphism of complexes from
$\mathcal{C}^*({\Bbbk}[G],N)\cong(\text{Hom}_{{\Bbbk}[G\times G^{op}]}(B({\Bbbk}[G];{\Bbbk}[G];{\Bbbk}[G]),N)$ to
$\text{Hom}_{{\Bbbk}[G]}(B({\Bbbk}[G];{\Bbbk}[G];{\Bbbk}),\tilde{N})$.
 inducing $\text{Ext}_E^*(\eta,N)$
and
$M\otimes_E\iota$ is an morphism of complexes
 from $$B(\tilde{M};{\Bbbk}[G];{\Bbbk})\cong\tilde{M}\otimes_{{\Bbbk}[G]}B({\Bbbk}[G];{\Bbbk}[G];{\Bbbk})$$
to
$$M\otimes_{{\Bbbk}[G\times G^{op}]}B({\Bbbk}[G];{\Bbbk}[G];{\Bbbk}[G])
\cong \mathcal{C}_*({\Bbbk}[G],M)
$$ 
 inducing $\text{Tor}_*^E(M,\eta)$.
Explicitly $M\otimes_E\iota$ is the morphism of complexes
$$
B(\tilde{M};{\Bbbk}[G];{\Bbbk})\rightarrow \mathcal{C}_*({\Bbbk}[G],M)
$$
defined by~\cite[(2.20)]{Feng-Tsygan}
\begin{equation}\label{application de Feng-Tsygan}
\xi(m[g_1|\dots|g_n]= g_n^{-1}\dots g_1^{-1}m[g_1|\dots|g_n].
\end{equation}
And $\text{Hom}_E(\iota,N):\mathcal{C}^*({\Bbbk}[G],N)\rightarrow
\text{Hom}(B({\Bbbk}[G]),\tilde{N}),d$ is the morphism of
complexes $\xi$ mapping $\varphi\in\mathcal{C}^*({\Bbbk}[G],N)$
to the linear map $\xi(\varphi):{\Bbbk}[G]\rightarrow\tilde{N}$ defined
by 
$$
\xi(\varphi)([g_1|\dots|g_n])=\varphi([g_1|\dots|g_n])g_n^{-1}\dots g_1^{-1}.
$$
Both $M\otimes_E\iota$ and $\text{Hom}_E(\iota,N)$ are in fact isomorphisms
of complexes.
The inverse of $M\otimes_E\iota$ is the morphism of complexes
$\Phi:\mathcal{C}_*({\Bbbk}[G],M)\rightarrow B(\tilde{M};{\Bbbk}[G];{\Bbbk})$
defined by~\cite[7.4.2.1]{LodayJ.:cych}
$$
\Phi(m[g_1|\dots|g_n])=g_1\dots g_n m[g_1|\dots|g_n].
$$
Let $F$ be any projective resolution of ${\Bbbk}$ as left
${\Bbbk}[G]$-module. Let $P$ and $Q$ be two ${\Bbbk}[G]$-modules.
The cap product in group cohomology is the composite~\cite[p. 113]{Brown:cohgro},
denoted $\cap$
$$
\xymatrix{
P\otimes_{{\Bbbk}[G]}F\otimes\text{Hom}_{{\Bbbk}[G]}(F,Q)
\ar[d]|{Id\otimes_{{\Bbbk}[G]}\Delta\otimes_{{\Bbbk}[G]} Id}\\
P\otimes_{{\Bbbk}[G]}(F\otimes F)\otimes\text{Hom}_{{\Bbbk}[G]}(F,Q)\ar[d]^\gamma\\
(P\otimes Q)\otimes_{{\Bbbk}[G]}F
}$$
where $\gamma(a\otimes x\otimes y\otimes u)=
(-1)^{\vert u\vert \vert x\vert+\vert u\vert\vert y\vert}a\otimes u(x)\otimes y$
and $\Delta$ is a {\it diagonal approximation}.
In the case, $F$ is the Bar resolution $B({\Bbbk}[G];{\Bbbk}[G];{\Bbbk})$, one can take
$\Delta$ to be the Alexander-Whitney map
$$AW:B({\Bbbk}[G];{\Bbbk}[G];{\Bbbk})\rightarrow
B({\Bbbk}[G];{\Bbbk}[G];{\Bbbk})\otimes B({\Bbbk}[G];{\Bbbk}[G];{\Bbbk})$$
defined by~\cite[p. 108 (1.4)]{Brown:cohgro}:
$$
AW(g_0[g_1|\dots|g_n])=\sum_{p=0}^n g_0[g_1|\dots|g_p]\otimes g_0\dots g_p[g_{p+1}|\dots|g_n].
$$
Therefore the cap product
$$\cap:B(P;{\Bbbk}[G];{\Bbbk})\otimes\text{Hom}(B({\Bbbk}[G]),Q),d\rightarrow
B(P\otimes Q;{\Bbbk}[G];{\Bbbk})$$ is the morphism of complexes
mapping $m[g_1|\dots|g_n]\otimes u:G^p\rightarrow Q$
to $m.g_1\dots g_p\otimes u(g_1,\dots,g_p).g_1\dots g_p[g_{p+1}|\dots|g_n]$.
Using the explicit formula~(\ref{cap Hochschild}) for the cap product in Hochschild cohomology,
it is easy to check that the following diagram commutes
$$
\xymatrix{
\mathcal{C}_*({\Bbbk}[G],M)\otimes \mathcal{C}^*({\Bbbk}[G],N)\ar[rr]^\cap\ar[d]|{\Phi\otimes\text{Hom}_E(\iota,N)}
&& \mathcal{C}_*({\Bbbk}[G],M\otimes_{{\Bbbk}[G]}N)\ar[d]^\Phi\\
B(\tilde{M};{\Bbbk}[G];{\Bbbk})\otimes B(\tilde{N};{\Bbbk}[G];{\Bbbk})\ar[r]_-\cap
& B(\tilde{M}\otimes\tilde{N};{\Bbbk}[G];{\Bbbk})\ar[r]_{B(q;{\Bbbk}[G];{\Bbbk})}
&B(\widetilde{M\otimes_{{\Bbbk}[G]}N};{\Bbbk}[G];{\Bbbk})
}$$
By applying homology, ii) is proved.
\end{proof} 
\begin{defin}~\cite[7.4.5 when z=1]{LodayJ.:cych}\label{definition de la section}
Let $\sigma:B({\Bbbk}[G])\hookrightarrow \mathcal{C}_*({\Bbbk}[G],{\Bbbk}[G])$
be the linear map defined by
$$
\sigma([g_1|\dots|g_n])=g_n^{-1}\dots g_1^{-1}[g_1|\dots|g_n].
$$
\end{defin}
\begin{propriete}\label{proprietes section}

i) ~\cite[7.4.5 when z=1]{LodayJ.:cych} The map $\sigma$ is a morphism
of cyclic modules.

ii) The morphism of complexes $\sigma$ coincides with the composite

$$
B({\Bbbk}[G])\buildrel{B(\eta;{\Bbbk}[G];{\Bbbk})}\over\rightarrow
B(\widetilde{{\Bbbk}[G]};{\Bbbk}[G];{\Bbbk})\build\rightarrow_\cong^\xi
\mathcal{C}_*({\Bbbk}[G];{\Bbbk}[G]).
$$
Here $\xi$ is the isomorphism of complexes defined by~(\ref{application de Feng-Tsygan}). Note that the unit map $\eta:{\Bbbk}\rightarrow\widetilde{{\Bbbk}[G]}$
is a morphism of ${\Bbbk}[G]$-modules.

iii) In particular, in homology, $\sigma$ coincides with
$$\text{Tor}^E(\eta,\eta):H_*(G;{\Bbbk})\rightarrow HH_*({\Bbbk}[G];{\Bbbk}[G]).$$

iv) The map $\sigma$ is a section of
$$\mathcal{C}_*({\Bbbk}[G],\varepsilon):\mathcal{C}_*({\Bbbk}[G],{\Bbbk}[G])
\rightarrow \mathcal{C}_*({\Bbbk}[G],{\Bbbk})=B({\Bbbk}[G]).$$
\end{propriete}
\begin{cor}\label{cap group egal cap Hochschild avec section}
Let $G$ be any discrete group. Let $N$ be a ${\Bbbk}[G]$-bimodule.
Let $\sigma:H_*(G;{\Bbbk})\rightarrow HH_*({\Bbbk}[G];{\Bbbk}[G])$ be the
section of $HH_*({\Bbbk}[G],\varepsilon):HH_*({\Bbbk}[G],{\Bbbk}[G])
\rightarrow H_*(G,{\Bbbk})$ defined in
Definition~\ref{definition de la section}.
Let $z\in H_d(G,{\Bbbk})$ be any element in group homology.
Then the following square commutes
$$
\xymatrix{
H^p(G,\tilde{N})\ar[r]^{z\cap -}
&H_{d-p}(G,\tilde{N})\ar[d]^{\text{Tor}^E_*(N,\eta)}_\cong\\
HH^p({\Bbbk}[G],N)\ar[r]^{\sigma(z)\cap -}\ar[u]^{\text{Ext}_E^*(\eta,N)}_\cong
& HH_{d-p}({\Bbbk}[G],N)
}
$$
\end{cor}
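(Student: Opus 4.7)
The plan is to deduce the square from Proposition~\ref{comparaison cup cap Hochschild group cohomology} ii) specialized to $M = {\Bbbk}[G]$. With this specialization $M \otimes_{{\Bbbk}[G]} N \cong N$, and, by the remark following the proposition, the surjection $q: \widetilde{{\Bbbk}[G]} \otimes \tilde{N} \to \tilde{N}$ becomes the left action $g \otimes n \mapsto g\cdot n$. The diagram of Proposition~\ref{comparaison cup cap Hochschild group cohomology} ii) then yields, for every $\beta \in H_d(G, \widetilde{{\Bbbk}[G]})$ and $a \in H^p(G, \tilde{N})$, the identity
\[
\text{Tor}^E_*({\Bbbk}[G], \eta)(\beta) \,\cap\, \text{Ext}_E^*(\eta, N)^{-1}(a) \;=\; \text{Tor}^E_*(N, \eta)\bigl(H_*(G,q)(\beta \cap a)\bigr).
\]

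Next I will apply this identity to $\beta := H_d(G, \eta)(z)$ and to $a := \text{Ext}_E^*(\eta,N)(\alpha)$, where $\alpha$ is an arbitrary class in $HH^p({\Bbbk}[G],N)$. By Property~\ref{proprietes section} iii), the left-hand side becomes exactly $\sigma(z) \cap \alpha$. For the right-hand side, the cap product $\cap: H_*(G, P) \otimes H^*(G, Q) \to H_*(G, P \otimes Q)$ is natural in the coefficient module $P$, so
\[
H_*(G,q)\bigl(H_d(G,\eta)(z) \cap a\bigr) \;=\; H_*\bigl(G,\, q\circ(\eta\otimes \mathrm{id})\bigr)(z\cap a).
\]
The composite $q\circ(\eta\otimes\mathrm{id}):{\Bbbk}\otimes\tilde{N}\to\widetilde{{\Bbbk}[G]}\otimes\tilde{N}\to\tilde{N}$ sends $1\otimes n$ to $1\cdot n=n$, so it coincides with the canonical isomorphism ${\Bbbk}\otimes\tilde{N}\cong\tilde{N}$. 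Therefore the right-hand side reduces to $\text{Tor}^E_*(N,\eta)(z\cap a) = \text{Tor}^E_*(N,\eta)\bigl(z\cap\text{Ext}_E^*(\eta,N)(\alpha)\bigr)$, which is precisely the commutativity of the square.

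I do not anticipate any real obstacle: the argument is a short diagram chase resting on the two previously established facts (Proposition~\ref{comparaison cup cap Hochschild group cohomology} ii) and Property~\ref{proprietes section} iii)) together with elementary naturality of the group-cohomological cap product in the coefficient module. The only point requiring brief verification is that the morphism $q$ in the specialized proposition really is the left action $g\otimes n\mapsto g\cdot n$, which follows from the identification $\widetilde{{\Bbbk}[G]\otimes_{{\Bbbk}[G]}N}\cong\tilde{N}$ made explicit in the paper's remark.
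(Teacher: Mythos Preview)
Your argument is correct and follows essentially the same route as the paper: specialize Proposition~\ref{comparaison cup cap Hochschild group cohomology} ii) to $M={\Bbbk}[G]$, use naturality of the group cap product along $\eta:{\Bbbk}\to\widetilde{{\Bbbk}[G]}$, and invoke Property~\ref{proprietes section} iii) to identify the composite $\text{Tor}^E_*({\Bbbk}[G],\eta)\circ H_*(G,\eta)$ with $\sigma$. The only minor slip is that the paper's remark describes $q$ in the case $N={\Bbbk}[G]$ rather than $M={\Bbbk}[G]$, but this is harmless since you only need the composite $q\circ(\eta\otimes\mathrm{id})$, which you verify directly.
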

\begin{proof}
$$
\xymatrix{
HH_*({\Bbbk}[G],{\Bbbk}[G])\otimes HH^*({\Bbbk}[G],N)\ar[rr]^\cap
&& HH_*({\Bbbk}[G],{\Bbbk}[G]\otimes_{{\Bbbk}[G]}N)\\
H_*(G,\widetilde{{\Bbbk}[G]})\otimes H^*(G,\tilde{N})\ar[r]_\cap
\ar[u]|{\text{Tor}^E_*({\Bbbk}[G],\eta)\otimes\text{Ext}_E^*(\eta,N)^{-1}}
& H_*(G,\widetilde{{\Bbbk}[G]}\otimes\tilde{N})\ar[r]_{H_*(G,q)}
&H_*(G,\tilde{N})
\ar[u]^{\text{Tor}^E_*(N,\eta)}\\
H_*(G,{\Bbbk})\otimes H^*(G,\tilde{N})\ar[r]_\cap
\ar[u]|{H_*(G,\eta)\otimes Id}
& H_*(G,{\Bbbk}\otimes\tilde{N})\ar[u]^{H_*(G,\eta\otimes\tilde{N} )}\ar[ur]_\cong
}
$$
The top rectangle commutes by ii) of Proposition~\ref{comparaison cup cap Hochschild group cohomology} in the case $M={\Bbbk}[G]$.
The bottom square commutes by naturality of the cap product in group
(co)homology with respect to the morphism of ${\Bbbk}[G]$-modules
$\eta:{\Bbbk}\rightarrow\widetilde{{\Bbbk}[G]}$.
The bottom triangle commutes by functoriality of $H_*(G,-)$.
By ii) or iii) of Property~\ref{proprietes section}, the vertical composite is
$$\sigma\otimes\text{Ext}_E^*(\eta,N)^{-1}:
H_*(G,{\Bbbk})\otimes H^*(G,\tilde{N})\rightarrow
HH_*({\Bbbk}[G],{\Bbbk}[G])\otimes HH^*({\Bbbk}[G],N).
$$
\end{proof}
\section{A new definition of Batalin-Vilkovisky algebras}
\begin{defin}\label{definition algebre de Gerstenhaber}
A {\it Gerstenhaber algebra} is a
commutative graded algebra $A$
equipped with a linear map
$\{-,-\}:A_i \otimes A_j \to A_{i+j+1}$ of degree $1$
such that:

\noindent a) the bracket $\{-,-\}$ gives $A$ a structure of graded
Lie algebra of degree $1$. This means that for each $a$, $b$ and $c\in A$
\begin{equation}\label{antisymmetrie}
\{a,b\}=-(-1)^{(\vert a\vert+1)(\vert b\vert+1)}\{b,a\}\text{ and} 
\end{equation}
$$\{a,\{b,c\}\}=\{\{a,b\},c\}+(-1)^{(\vert a\vert+1)(\vert b\vert+1)}
\{b,\{a,c\}\}.$$

\noindent b)  the product and the Lie bracket satisfy the following relation
called the Poisson relation:
$$\{a,bc\}=\{a,b\}c+(-1)^{(\vert a\vert+1)\vert b\vert}b\{a,c\}.$$
\end{defin}
\begin{defin}\label{definition BV algebre}
A {\it Batalin-Vilkovisky algebra} is a Gerstenhaber algebra
$A$ equipped with a degree $1$ linear map $\Delta:A_{i}\rightarrow A_{i+1}$
such that $\Delta\circ\Delta=0$ and such that the bracket is given by
\begin{equation}\label{relation BV} 
\{a,b\}=(-1)^{\vert a\vert}\left(\Delta(a\cup b)-(\Delta a)\cup b-(-1)^{\vert
  a\vert}a\cup(\Delta b)\right)
\end{equation}
for $a$ and $b\in A$.
\end{defin}
\begin{rem}
In~(\ref{relation BV}), a sign
(here the sign chosen is $(-1)^{\vert a\vert}$) is needed
(See~\cite[(1.6)]{Koszul:Crochet} or~\cite[beginning of the proof of Proposition 1.2]{Getzler:BVAlg}),
since the Lie bracket of degre $+1$ is graded antisymmetric (equation~(\ref{antisymmetrie}))while
the associative product is graded commutative. Therefore the definition of Batalin-Vilkovisky algebra
in~\cite[Theorem 3.4.3 (ii)]{Ginzburg:CYalgebras} and~\cite[p. 1]{Lambre:dualiteVdB} has a sign problem.
\end{rem}
The following characterization of Batalin-Vilkovisky algebras was proved by Koszul and
rediscovered by Getzler and by Penkava and Schwarz.
\begin{proposition}~\cite[p. 3]{Koszul:Crochet}~\cite[Proposition
  1.2]{Getzler:BVAlg}~\cite{Penkava-Schwarz:algebraic}\label{equivalence
  BV 2nd order derivation}
Let $A$ be  a commutative graded algebra
$A$ equipped with an operator $\BV:A_i\rightarrow A_{i+1}$ of degree $1$
such that $\BV\circ \BV=0$.
Consider the bracket $\{\;,\;\}$ of degree $+1$ defined
by 
$$\{a,b\}=(-1)^{\vert a\vert}\left(\BV(ab)-(\BV a)b-(-1)^{\vert
  a\vert}a(\BV b)\right)$$
for any $a$, $b\in A$.
Then $A$ is a Batalin-Vilkovisky algebra
if and only if $\Delta$ is a differential operator of degree $\leq 2$,
this means that for $a$, $b$ and $c\in A$,
\begin{multline}\label{2nd order derivation}
\BV(abc)=\BV(ab)c+(-1)^{\vert a\vert} a\BV(bc)+(-1)^{(\vert a\vert -1)\vert b\vert}b\BV(ac)\\
-(\BV a)bc-(-1)^{\vert a\vert}a(\BV b)c
-(-1)^{\vert a\vert +\vert b\vert} ab(\BV c).
\end{multline}
\end{proposition}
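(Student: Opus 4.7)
The plan is to prove the equivalence in both directions under the standing hypotheses that $A$ is graded commutative, $\Delta$ has degree $+1$, and $\Delta^{2}=0$. Introduce the second deviation $\Phi_{2}(a,b):=\Delta(ab)-(\Delta a)b-(-1)^{|a|}a\,\Delta b$, so by definition $\{a,b\}=(-1)^{|a|}\Phi_{2}(a,b)$; the second-order condition~(\ref{2nd order derivation}) is precisely the vanishing of the analogous threefold deviation obtained by subtracting its right-hand side from $\Delta(abc)$. The three things that need to be verified for $A$ to be a Batalin-Vilkovisky algebra are graded antisymmetry of $\{-,-\}$, the Poisson rule, and the graded Jacobi identity.

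First I would verify graded antisymmetry: using $ba=(-1)^{|a||b|}ab$ together with the Koszul signs for $\Delta$, a short direct calculation yields $\Phi_{2}(b,a)=(-1)^{|a||b|}\Phi_{2}(a,b)$, and substituting back into the formula for $\{-,-\}$ gives~(\ref{antisymmetrie}); this uses nothing beyond the degree of $\Delta$. Next, I would show that the Poisson rule is equivalent to the second-order condition: expand $\Delta((ab)c)$ and $\Delta(a(bc))$ by applying the identity $\Delta(xy)=\Phi_{2}(x,y)+(\Delta x)y+(-1)^{|x|}x\,\Delta y$ twice in each, use associativity to equate the two expansions, and cancel the terms involving $\Delta$ on a single generator. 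What survives is the single identity $\Phi_{2}(a,bc)-(-1)^{|a|}a\,\Phi_{2}(b,c)=\Phi_{2}(ab,c)-\Phi_{2}(a,b)c$. Rewriting this identity in terms of $\{-,-\}$ (using the graded commutativity of $\Phi_{2}$ just established) reproduces exactly the Poisson rule, while regrouping its terms in the form ``$\Delta(abc)=\cdots$'' reproduces exactly~(\ref{2nd order derivation}); hence the two are equivalent. The converse direction of the proposition (BV implies second-order) is an immediate consequence.

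The remaining issue, and the main obstacle, is to derive the graded Jacobi identity from $\Delta^{2}=0$ together with the second-order condition. My plan is to compute $\Delta^{2}(abc)$ in two ways: first apply $\Delta$ to~(\ref{2nd order derivation}), then expand each resulting summand with the formula $\Delta(xy)=\pm\{x,y\}+(\Delta x)y\pm x\,\Delta y$. Every term containing $\Delta^{2}$ on a single factor vanishes by hypothesis, while the surviving terms, after commuting each remaining $\Delta$ past the adjacent factors using the now-established Poisson rule, reorganise precisely into the Jacobiator $\{a,\{b,c\}\}-\{\{a,b\},c\}-(-1)^{(|a|+1)(|b|+1)}\{b,\{a,c\}\}$. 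The vanishing of $\Delta^{2}$ therefore forces the Jacobi identity. The only technical difficulty throughout is Koszul sign bookkeeping, which is exactly the subtlety emphasised in the Remark preceding the statement; I would carry out this step by tracking signs through a fixed ordering convention rather than trying to guess them.
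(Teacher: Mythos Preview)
The paper does not supply its own proof of this proposition: it is stated with citations to Koszul, Getzler, and Penkava--Schwarz and is immediately followed by the remark about units, with no proof environment in between. So there is nothing in the paper to compare your argument against; you are reconstructing the classical proof.

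Your outline is essentially the standard one (close to Getzler's), and the overall strategy---antisymmetry for free, then ``Poisson $\Leftrightarrow$ second-order'', then Jacobi from $\Delta^{2}=0$---is correct. One point to fix: the step where you ``expand $\Delta((ab)c)$ and $\Delta(a(bc))$ and equate by associativity'' does not give the second-order condition; it gives the tautological cocycle identity
\[
\Phi_{2}(ab,c)+\Phi_{2}(a,b)\,c=\Phi_{2}(a,bc)+(-1)^{|a|}a\,\Phi_{2}(b,c),
\]
which holds for any $\Delta$ whatsoever and therefore cannot be equivalent to the Poisson rule. The clean way to get the equivalence is to write the Poisson rule as $\Phi_{2}(a,bc)=\Phi_{2}(a,b)\,c+(-1)^{(|a|+1)|b|}b\,\Phi_{2}(a,c)$, expand each $\Phi_{2}$, and use graded commutativity to pass $b$ across $\Delta a$ and across $a$; after simplification this is literally~(\ref{2nd order derivation}). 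With that correction your plan goes through.
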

Remark that till now, in this section, it is not necessary that the algebras have an unit.
Now if the algebras have an unit,
we give a new characterization of Batalin-Vilkovisky algebra.
%This new characterization of Batalin-Vilkovisky algebras will allow us to simplify
%Ginzburg proof of Proposition~\ref{BV algebre homologie de Hochschild}.
One implication in this new characterization is inspired by
Ginzburg's proof of Proposition~\ref{BV algebre homologie de Hochschild}.
As we will recall in the proof of Theorem~\ref{injection d'une BV-algebre dans End},
the converse in this characterization is due to~\cite[``the restriction of this derived bracket to $A$
is the BV-bracket'', p. 1270]{Kosmann-Schwarzbach:PoissontoGerst}.
\begin{proposition}\label{equivalence BV derived bracket}
Let $A$ be  a Gerstenhaber algebra
$A$ equipped with an operator $\BV:A\rightarrow A$ of degree $1$
such that $\BV\circ \BV=0$.
For any $a\in A$, denote by $l_a:A\rightarrow A$, the left multiplication by $a$,
explicitly $l_a(b)=ab$, $b\in A$.
Denote by $[f,g]=f\circ g-(-1)^{\vert f\vert\vert g\vert}g\circ g$ the graded commutator
of two endomorphisms $f:A\rightarrow A$ and $g:A\rightarrow A$.
Then $A$ is a Batalin-Vilkovisky algebra
if and only if for $a$, $b\in A$,
$$l_{\{a,b\}}=-[[l_a,\Delta],l_b]\quad\text{and}\quad\Delta (1)=0.$$
\end{proposition}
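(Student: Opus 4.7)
The plan is to reduce both directions of the equivalence to a single explicit expansion of $-[[l_a,\Delta],l_b](c)$ for arbitrary $a,b,c\in A$, and then compare that expansion to the Batalin-Vilkovisky formula~(\ref{relation BV}) and to the second-order derivation identity~(\ref{2nd order derivation}).

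First I would unfold the double graded commutator, using $|l_x|=|x|$, $|\Delta|=1$ and $l_x(y)=xy$. A direct computation, applying graded commutativity once in the form $ba=(-1)^{|a||b|}ab$ to reorder the factor $bac$, yields
\begin{align*}
-[[l_a,\Delta],l_b](c)
 =\,& (-1)^{|a|}\Delta(abc)-a\Delta(bc)+(-1)^{|b|}ab\Delta(c)\\
    & -(-1)^{|a|+(|a|+1)|b|}\,b\Delta(ac).
\end{align*}
Both directions then follow from this identity by pure sign bookkeeping.

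For $(\Rightarrow)$, assume that $A$ is a Batalin-Vilkovisky algebra. The relation $\Delta(1)=0$ is a formal consequence of the Poisson axiom (which forces $\{a,1\}=0$) combined with~(\ref{relation BV}) applied at $a=b=1$. For the bracket identity I would invoke Proposition~\ref{equivalence BV 2nd order derivation}: since $\Delta$ is then a differential operator of order at most $2$, the identity~(\ref{2nd order derivation}) lets me substitute for $(-1)^{|a|}\Delta(abc)$ in the displayed expansion. After the obvious cancellations, the remaining terms collapse to $(-1)^{|a|}\bigl(\Delta(ab)-(\Delta a)b-(-1)^{|a|}a\Delta b\bigr)c=\{a,b\}c=l_{\{a,b\}}(c)$.

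For $(\Leftarrow)$, assume the two identities of the statement and evaluate the displayed formula at $c=1$. Since $\Delta(1)=0$, the term $(-1)^{|b|}ab\Delta(c)$ vanishes, and moving $b$ past $\Delta(a)$ via $b\Delta(a)=(-1)^{|b|(|a|+1)}\Delta(a)b$ makes the exponent $|a|+(|a|+1)|b|+(|a|+1)|b|\equiv|a|\pmod{2}$. Hence $-[[l_a,\Delta],l_b](1)=(-1)^{|a|}\bigl(\Delta(ab)-(\Delta a)b-(-1)^{|a|}a\Delta b\bigr)$, and the hypothesis identifies this with $l_{\{a,b\}}(1)=\{a,b\}$. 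This is precisely~(\ref{relation BV}), so $A$ is a Batalin-Vilkovisky algebra. The only real obstacle is keeping Koszul signs straight in the initial expansion; once that identity is in hand, both implications are immediate and no structural input beyond Proposition~\ref{equivalence BV 2nd order derivation} is needed.
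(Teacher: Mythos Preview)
Your proof is correct and follows essentially the same route as the paper: both expand $-[[l_a,\Delta],l_b](c)$ explicitly, then for $(\Rightarrow)$ substitute via the second-order identity~(\ref{2nd order derivation}) to collapse the result to $\{a,b\}c$, and for $(\Leftarrow)$ evaluate at $c=1$ and use graded commutativity to recover~(\ref{relation BV}). The only cosmetic difference is your derivation of $\Delta(1)=0$ from the Poisson relation and~(\ref{relation BV}) at $a=b=1$, whereas the paper simply sets $a=b=c=1$ in~(\ref{2nd order derivation}); both are immediate.
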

\begin{proof}
For $a$ and $b\in A$,
\begin{multline*}
[[l_a,\Delta],l_b]=
\left(l_a\circ\Delta-(-1)^{\vert a}\Delta\circ
l_a\right)\circ l_b\\
-(-1)^{\vert b\vert(\vert a\vert+1)} l_b
\circ\left(l_a\circ\Delta-(-1)^{\vert a}\Delta\circ l_a\right)\\
=l_a\circ\Delta\circ l_b -(-1)^{\vert a\vert}\Delta\circ l_{ab}
-(-1)^{\vert b\vert}l_{ab}\circ\Delta+(-1)^{\vert b\vert(\vert a\vert
  +1)+\vert a\vert}l_b\circ\Delta\circ l_a.
\end{multline*}
Therefore  by applying this equality of operators to $c\in A$, we have
\begin{multline}\label{expression derived bracket}
-(-1)^{\vert a\vert}[[l_a,\Delta],l_b](c)=
 -(-1)^{\vert a\vert}a\Delta(bc)+\Delta(abc)\\
+(-1)^{\vert a\vert+\vert b\vert}ab\Delta(c)
-(-1)^{\vert b\vert(\vert a\vert +1)}b\Delta(ac).
\end{multline}
Suppose that $A$ is a Batalin-Vilkovisky algebra.
By Proposition~\ref{equivalence
  BV 2nd order derivation}, using~(\ref{expression derived bracket}), we obtain that
$$
-(-1)^{\vert a\vert}[[l_a,\Delta],l_b](c)=
\Delta(ab)c-(\Delta a)bc -(-1)^{\vert a\vert}a(\Delta b)c
=(-1)^{\vert a\vert}\{a,b\}c.
$$
Therefore $-[[l_a,\Delta],l_b]=l_{\{a,b\}}$.
In the case $a=b=c=1$, equation~(\ref{2nd order derivation})
gives $\Delta(1)=3\Delta(1)-3\Delta(1)=0$.

Conversely, suppose that $\Delta(1)=0$ and that
$l_{\{a,b\}}=-[[l_a,\Delta],l_b]$.
Then using~(\ref{expression derived bracket})
$$
\{a,b\}=l_{\{a,b\}}(1)=(-1)^{\vert a\vert}
\left( -(-1)^{\vert a\vert}a\Delta(b)+\Delta(ab)
+0-(\Delta a)b\right).
$$
Therefore, by Definition~\ref{definition BV algebre}, $A$ is a Batalin-Vilkovisky algebra.
\end{proof}
\section{Batalin-Vilkovisky algebra structures on Hochschild cohomology}
Let $A$ be a differential graded algebra.
The cap product defined in Section~\ref{iso cohomology homology},
$$HH_*(A,A)\otimes HH^*(A,A)\buildrel{\cap}\over\rightarrow HH_*(A,A),
c\otimes a\mapsto c\cap a$$
is a right action.

Following Tsygan definition of a calculus, we want a left action.
Therefore, we define as in~\cite[Definition 1.2]{Lambre:dualiteVdB},
$$
\mathcal{C}^*(A,A)\otimes \mathcal{C}_*(A,A)\rightarrow
\mathcal{C}_*(A,A)$$
\begin{equation}\label{passage action gauche droite}
f\otimes c\mapsto i_f(c)=f\cdot c:=(-1)^{\vert c\vert\vert f\vert}c\cap f.
\end{equation}
Explicitly
$$i_f(m[a_1|\dots|a_n]):=\sum_{p=0}^n (-1)^{\vert m\vert\vert f\vert}
(m.f[a_1|\dots|a_p])[a_{p+1}|\dots|a_n].$$
The sign in~\cite[(18)
p. 82]{Cuntz-Skandalis-Tamarkin:cyclichomnoncomgeom}
is different.
But with our choice of signs, we recover Proposition 2.6
in~\cite[p. 82]{Cuntz-Skandalis-Tamarkin:cyclichomnoncomgeom}.
Indeed for $D$, $E\in\mathcal{C}^*(A,A)$ and $c\in\mathcal{C}_*(A,A)$,
\begin{multline*}
D\cdot(E\cdot c)=(-1)^{\vert c\vert\vert E\vert} D\cdot(c\cap E)
=(-1)^{\vert c\vert \vert E\vert+\vert D\vert \vert c\vert+\vert
  D\vert \vert E\vert}
(c\cap E)\cap D\\
=(-1)^{\vert c\vert \vert E\vert+\vert D\vert \vert c\vert+\vert
  D\vert \vert E\vert}
c\cap (E\cup D)=
(-1)^{\vert D\vert\vert E\vert}(E\cup D)\cdot c
\end{multline*}
Since the cup product on $HH^*(A,A)$ is graded commutative,
for $D$, $E\in HH^*(A,A)$ and $c\in HH_*(A,A)$, we have
\begin{equation}\label{action a gauche}
D\cdot(E\cdot c)=(D\cup E)\cdot c,
\end{equation} i. e. a left action.
Note that in~\cite{Menichi:BV_Hochschild}, the author
forgot to twist the right action by the sign $(-1)^{\vert c\vert\vert
  f\vert}$, therefore has also a sign problem!
\begin{proposition}~\cite[Theorem 3.4.3 (ii)]{Ginzburg:CYalgebras}\label{BV algebre homologie de Hochschild}
Let $c\in HH_{d}(A,A)$ such that
the morphism of left $HH^*(A,A)$-modules
$$HH^p(A,A)\buildrel{\cong}\over\rightarrow
HH_{d-p}(A,A),\;a\mapsto a\cdot c$$
is an isomorphism.
If $B(c)=0$ then the Gerstenhaber algebra
$HH^*(A,A)$ equipped with $-B$ is a Batalin-Vilkovisky algebra.
\end{proposition}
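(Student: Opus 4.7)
The plan is to apply the new characterization of Batalin-Vilkovisky algebras from Proposition~\ref{equivalence BV derived bracket} to the operator $\Delta := -\mathcal{D}^{-1} \circ B \circ \mathcal{D}$ on $HH^*(A,A)$, where $\mathcal{D}$ denotes the Van den Bergh isomorphism $a \mapsto a \cdot c$. Then $\Delta \circ \Delta = 0$ follows from $B \circ B = 0$, and $\Delta(1) = -\mathcal{D}^{-1}(B(\mathcal{D}(1))) = -\mathcal{D}^{-1}(B(c)) = 0$ by the hypothesis $B(c)=0$. Since $HH^*(A,A)$ is already a Gerstenhaber algebra, it remains only to verify the derived bracket identity $l_{\{a,b\}} = -[[l_a, \Delta], l_b]$ in $\mathrm{End}(HH^*(A,A))$.

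To carry this out I would first exploit that $\mathcal{D}$ is $HH^*(A,A)$-linear: by the associativity~(\ref{action a gauche}) of the left action, $\mathcal{D}(a \cup b) = (a \cup b) \cdot c = a \cdot (b \cdot c) = i_a(\mathcal{D}(b))$, so $\mathcal{D} \circ l_a = i_a \circ \mathcal{D}$. Tracking signs, this gives
$$\mathcal{D} \circ [l_a, \Delta] = -[i_a, B] \circ \mathcal{D}$$
and consequently
$$\mathcal{D} \circ [[l_a, \Delta], l_b] = -[[i_a, B], i_b] \circ \mathcal{D},$$
while $\mathcal{D} \circ l_{\{a,b\}} = i_{\{a,b\}} \circ \mathcal{D}$. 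Since $\mathcal{D}$ is an isomorphism, the derived bracket identity is therefore equivalent to the Cartan-type relation
$$i_{\{a,b\}} = [[i_a, B], i_b]$$
as operators on $HH_*(A,A)$.

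To establish this last relation I would work at the chain level, invoking the non-commutative calculus structure on the pair $(HH^*(A,A), HH_*(A,A))$ due to Daletski--Gelfand--Tsygan (and refined by Getzler, Tamarkin--Tsygan). The two key homotopies are Rinehart's Cartan formula $L_f \simeq [B, i_f] + i_{\delta f}$, expressing the Lie derivative in terms of Connes' boundary and contraction, together with $[L_f, i_g] \simeq \pm\, i_{\{f,g\}}$ relating the Lie derivative to the Gerstenhaber bracket. Since contractions by Hochschild coboundaries induce zero on homology, composing these two identities yields $[[i_a, B], i_b] = i_{\{a,b\}}$ in $\mathrm{End}(HH_*(A,A))$, which is what is needed.

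The main obstacle is rigorous sign bookkeeping throughout, as emphasized in the paper: one must reconcile the right action~(\ref{cap Hochschild}), the Koszul twist~(\ref{passage action gauche droite}) converting it into the left action $i_f$, the graded commutator signs in the derived bracket formula, and the chain-level homotopies defining the calculus. Once these signs are aligned, the argument is formal, which is precisely the advantage of routing the proof through Proposition~\ref{equivalence BV derived bracket} rather than through the second-order differential operator characterization of Proposition~\ref{equivalence  BV 2nd order derivation}.
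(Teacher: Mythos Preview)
Your proposal is correct and follows essentially the same route as the paper: define $\Delta$ via $\mathcal{D}$ and $B$, use the $HH^*(A,A)$-linearity of $\mathcal{D}$ to transport the derived-bracket identity on $HH^*(A,A)$ to the identity $i_{\{a,b\}}=[[i_a,B],i_b]$ on $HH_*(A,A)$, establish the latter from the Daletski--Gelfand--Tsygan calculus relations $L_a=[B,i_a]$ and $i_{\{a,b\}}=(-1)^{|a|+1}[L_a,i_b]$, and conclude via Proposition~\ref{equivalence BV derived bracket}. The paper's proof is organized in exactly this way, with the same intermediate sign computation.
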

\begin{proof}%[Proof of Proposition~\ref{BV algebre homologie de Hochschild}]
Let us rewrite the proof of Victor Ginzburg
(or more precisely the proof, we already gave in~\cite[Proposition 13 and Lemma 15]{Menichi:BV_Hochschild})
using explicitly our
Proposition~\ref{equivalence BV derived bracket}
and our choice of signs.
Denote by
$$HH^p(A,A)\otimes HH_j(A,A)\rightarrow HH_{j-p+1}(A,A)$$
$$a\otimes x\mapsto L_{a}(x)
$$
the action of the suspended graded Lie algebra $HH^*(A,A)$
on $HH_*(A,A)$.
Gelfand, Daletski and
Tsygan~\cite{Daletski-Gelfand-Tsygan:variantnoncommdiffgeom} proved that the Gerstenhaber algebra
$HH^*(A,A)$ and Connes boundary map $B$ on $HH_*(A,A)$ form a calculus
~\cite[p. 93]{Cuntz-Skandalis-Tamarkin:cyclichomnoncomgeom}.
In particular, we have the two relations
$$ L_a=[B,i_{a}]$$
and~\cite[Proposition 2.9 p. 83]{Cuntz-Skandalis-Tamarkin:cyclichomnoncomgeom}
\begin{equation}\label{action d'un crochet}
i_{\{a,b\}}=(-1)^{\vert a\vert+1}[L_a,i_b].
\end{equation}
Therefore
\begin{equation}\label{gerstenhaber bracket=derived bracket}
i_{\{a,b\}}=(-1)^{\vert a\vert+1}[[B,i_{a}],i_b]=[[i_{a},B],i_b].
\end{equation}
The operator  $\Delta$ on $HH^*(A,A)$ is defined
by $$(\Delta a)\cdot c:=-B(a\cdot c)\quad\text{ for any }\quad a\in HH^*(A,A).$$
Thus $B(c)=0$ implies $\Delta(1)=0$.
Since we have a left action (equation~(\ref{action a gauche})), $l_a(b)\cdot c=(a\cup b)\cdot c=
a\cdot(b\cdot c)=i_a(b\cdot c)$
and so equation~(\ref{gerstenhaber bracket=derived bracket}) is equivalent
to
$$
l_{\{a,b\}}=-[[l_{a},\Delta],l_b].
$$
Therefore, by Proposition~\ref{equivalence BV derived bracket},
$HH^*(A,A)$ is a Batalin-Vilkovisky algebra.
\end{proof}
\begin{rem}(Signs)

i) In~\cite[Example 4.6 p. 93]{Cuntz-Skandalis-Tamarkin:cyclichomnoncomgeom}, Tsygan writes
that it follows from~\cite[2.9 p. 83]{Cuntz-Skandalis-Tamarkin:cyclichomnoncomgeom},
that $i_{\{a,b\}}=[L_a,i_b]$. We do not understand why he has no sign in this formula.
We believe that from~\cite[2.9 p. 83]{Cuntz-Skandalis-Tamarkin:cyclichomnoncomgeom}, the correct
equation with the signs is equation~(\ref{action d'un crochet}) above.

ii) In a calculus, there is a third relation, that we do not use in this paper:
$$
L_{ab}=L_{a}i_b+(-1)^{\vert a\vert}i_aL_b.
$$
Since $ab=(-1)^{\vert a\vert\vert b\vert}ba$,
$$
L_{ab}=(-1)^{\vert a\vert\vert b\vert}L_{ba}=(-1)^{\vert a\vert\vert b\vert}L_bi_a
+(-1)^{(\vert a\vert+1)\vert b\vert}i_bL_a
$$
and therefore
\begin{equation}\label{action de Lie d'un produit correct}
[L_a,i_b]=(-1)^{\vert a\vert\vert b\vert}[L_b,i_a]
\end{equation}
Since $\{a,b\}=-(-1)^{(\vert a\vert+1)(\vert b\vert+1)}\{b,a\}$,

-if we suppose like Tsygan that
 $i_{\{a,b\}}=[L_a,i_b]$, we obtain that
\begin{equation}\label{action de Lie d'un produit d'apres tsygan}
[L_a,i_b]=-(-1)^{(\vert a\vert+1)(\vert b\vert+1)}[L_b,i_a].
\end{equation}
The two equations~(\ref{action de Lie d'un produit correct}) and~(\ref{action de Lie d'un produit d'apres tsygan}) seem incoherent. Therefore we believe that the definition of calculus of Tsygan has some sign
problem.

-on the contrary, if we suppose~(\ref{action d'un crochet}),
we obtain again~(\ref{action de Lie d'un produit correct}).
\end{rem}
\section{Proof of the main theorem for path-connected groups}
\noindent{\bf Cap products associated to coalgebras}.
Let $C$ be a (differential graded) coalgebra.
Then its dual $C^\vee$ is a (differential graded) algebra.
Let $N$ be a left $C$-comodule.
Denote by $\Delta_N:N\rightarrow C\otimes N$ the structure map.
Let $\cap:N\otimes C^\vee\rightarrow N$ be the composite
\begin{equation}\label{definition cap product coalgebre}
N\otimes C^\vee\buildrel{\Delta_N\otimes C^\vee}\over\rightarrow
C\otimes N\otimes C^\vee\buildrel{C\otimes\tau}\over\rightarrow
C\otimes C^\vee\otimes N\buildrel{ev\otimes N}\over\rightarrow {\Bbbk}\otimes N\cong N.
\end{equation}
Here $\tau$ denotes the twist map given by
$n\otimes\varphi\mapsto (-1)^{\vert n\vert\vert\varphi\vert}\varphi\otimes n$
and $ev$ is the evaluation map defined by
$ev(c\otimes \varphi)=(-1)^{\vert \varphi\vert\vert c\vert}\varphi(c)$.
Then $N$ equipped with the cap product is a right
$C^\vee$-module~\cite[Proposition 2.1.1]{Sweedler:livre}.
In this paper, we are only interested in the case $N=C$.
\begin{ex}
Let $X$ be any topological space.
The (normalized or unnormalized) singular chains of $X$, $S_*(X)$ forms
a differential graded coalgebra~\cite[p. 244-5]{MacLane:hom}.
The cap product defined by~(\ref{definition cap product coalgebre}) associated to $C=S_*(X)$,
$\cap:S_*(X)\otimes S^*(X)\rightarrow S_*(X)$ is the usual cap product.
\end{ex}
\begin{ex}\label{comparaison cap produit bar construction}
Let $A$ be any augmented differential graded algebra.
Then the reduced (normalized or not) Bar construction $B(A)=\mathcal{C}_*(A,{\Bbbk})$
is a differential graded coalgebra.
The diagonal $\Delta:B(A)\rightarrow B(A)\otimes B(A)$ is given
by $$\Delta([a_1|\dots|a_n])=\sum_{p=0}^n[a_1|\dots|a_p]\otimes [a_{p+1}|\dots|a_n].$$
The cap product defined by~(\ref{definition cap product coalgebre}) associated to $C=B(A)$ is given by
$$\cap:B(A)\otimes B(A)^\vee\rightarrow B(A)$$
$$[a_1|\dots|a_n]\cap f=\sum_{p=0}^n
(-1)^{\vert f\vert(\vert a_1\vert+\dots+\vert a_n\vert+n)}f([a_1|\dots|a_p])[a_{p+1}|\dots|a_n].$$
Therefore this cap product coincides with the cap product on the Hochschild (co)chain complex
$\cap:\mathcal{C}_*(A,{\Bbbk})\otimes \mathcal{C}^*(A,{\Bbbk})
\rightarrow\mathcal{C}_*(A,{\Bbbk})$
defined by~(\ref{cap Hochschild morphism algebres}) in the case $N=B={\Bbbk}$.
\end{ex}
\begin{proposition}\label{invariance homotopique algebre a dualite de Poincare}
Let $f:C\buildrel{\simeq}\over\rightarrow D$ be a quasi-isomorphism of coalgebras. Suppose that $C$ and $D$ are ${\Bbbk}$-free.
Let $\tilde{c}\in C$ and $\tilde{d}\in D$ such that
$\tilde{d}=H_*(f)([\tilde{c}])$.
Consider the cap products defined by~(\ref{definition cap product coalgebre}) associated to the coalgebras $C$ and $D$.
Then 

the morphism of right $C^\vee$-modules $\tilde{c}\cap -:C^\vee\rightarrow C$
given by $a\mapsto \tilde{c}\cap a$ is a quasi-isomorphism
if and only if

the morphism of right $D^\vee$-modules $\tilde{d}\cap -:D^\vee\rightarrow D$
given by $a\mapsto \tilde{d}\cap a$ is a quasi-isomorphism.
\end{proposition}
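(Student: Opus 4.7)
The plan is to relate the two cap products through a commutative square whose vertical arrows are quasi-isomorphisms, then conclude by a trivial diagram chase. First, since $f$ is a morphism of differential graded coalgebras, its linear dual $f^\vee:D^\vee\to C^\vee$ is a morphism of differential graded algebras. A direct unraveling of the definition~(\ref{definition cap product coalgebre}), together with $\Delta_D\circ f=(f\otimes f)\circ\Delta_C$ and the obvious adjunction-style identity $ev_D\circ(f\otimes D^\vee)=ev_C\circ(C\otimes f^\vee)$ between the two evaluation pairings, yields the chain-level identity
$$
f(\tilde{c}\cap f^\vee(a))=f(\tilde{c})\cap a\quad\text{for all }a\in D^\vee.
$$
In other words, the map $f(\tilde{c})\cap -:D^\vee\to D$ factors at the chain level as $f\circ(\tilde{c}\cap -)\circ f^\vee$.

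Next, the hypothesis $[\tilde{d}]=H_*(f)([\tilde{c}])$ says exactly that $f(\tilde{c})$ and $\tilde{d}$ are homologous in $D$, so the chain maps $f(\tilde{c})\cap -$ and $\tilde{d}\cap -$ from $D^\vee$ to $D$ induce the same map on (co)homology. Combined with the identity above, this produces the commutative square
$$
\xymatrix{
H^*(D^\vee)\ar[r]^{[\tilde{d}]\cap -}\ar[d]_{H^*(f^\vee)}
& H_*(D)\\
H^*(C^\vee)\ar[r]_{[\tilde{c}]\cap -}
& H_*(C)\ar[u]_{H_*(f)}
}
$$

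It remains to see that the two vertical arrows are isomorphisms. The right vertical arrow $H_*(f)$ is an isomorphism by hypothesis. The left vertical arrow $H^*(f^\vee)$ is an isomorphism because ${\Bbbk}$ is a principal ideal domain and $C$, $D$ are ${\Bbbk}$-free: the universal coefficient theorem applied to the complexes $C$ and $D$ produces natural short exact sequences expressing $H^n(C^\vee)$ and $H^n(D^\vee)$ as extensions of $\mathrm{Hom}(H_n(-),{\Bbbk})$ by $\mathrm{Ext}^1(H_{n-1}(-),{\Bbbk})$; the quasi-isomorphism $f$ induces isomorphisms on each of these functorial pieces, so the five lemma gives an isomorphism on $H^*(-{}^\vee)$. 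This step is the only place where the principal ideal domain and ${\Bbbk}$-freeness hypotheses are used, and is the main (though still elementary) technical point in the argument.

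Finally, in a commutative square of four ${\Bbbk}$-linear maps in which two opposite edges are isomorphisms, one of the remaining edges is an isomorphism if and only if the other is. Applied to the square above, this gives the desired equivalence: $[\tilde{c}]\cap -$ is an isomorphism on homology if and only if $[\tilde{d}]\cap -$ is, which is exactly the conclusion of the proposition.
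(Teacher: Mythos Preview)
Your proof is correct and follows essentially the same approach as the paper's: both establish the chain-level identity $f(\tilde{c}\cap f^\vee(a))=f(\tilde{c})\cap a$, use $[\tilde{d}]=[f(\tilde{c})]$ to obtain the commutative square in homology, and then invoke the universal coefficient theorem over a principal ideal domain to see that $H^*(f^\vee)$ is an isomorphism. The only cosmetic difference is that the paper phrases the factorization via the right $D^\vee$-module structure (the composite $f\circ(\tilde{c}\cap-)\circ f^\vee$ is $D^\vee$-linear and sends $1$ to $f(\tilde{c})$), whereas you verify it directly from the coalgebra compatibility.
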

\begin{proof}
The transpose of $f$: $f^\vee:D^\vee\rightarrow C^\vee$ is a morphism
of differential graded algebras.
Therefore $f^\vee$ is a morphism of right $D^\vee$-modules.
Dually, since $f$ is a morphism of coalgebras, $f$ is a morphism
of left $D$-comodules and therefore $f$
is also a morphism of right $D^\vee$-modules by~(\ref{definition cap product coalgebre}),
i. e. $f(c\cap f^\vee(\varphi))=f(c)\cap\varphi$ for any $c\in C$
and $\varphi\in D^\vee$. Note that if $f$ is the coalgebra
map $S_*(\lambda):S_*(X)\rightarrow S_*(Y)$ induced by
a continuous map $\lambda:X\rightarrow Y$, this formula is well known
(\cite[Chapter VI 5. Theorem (4)]{Bredon:topologygeometry} or~\cite[p. 241]{Hatcher:algtop}).

The composite of the morphisms of right $D^\vee$-modules
$$D^\vee\buildrel{f^\vee}\over\rightarrow C^\vee\buildrel{\tilde{c}\cap -}\over\rightarrow
C\buildrel{f}\over\rightarrow D
$$
maps $1$ to $f(\tilde{c})$ and therefore coincides with the morphism
of right $D^\vee$-modules $D^\vee\rightarrow D$, $a\mapsto f(\tilde{c})\cap a$.
Since $[\tilde{d}]=[f(\tilde{c})]$, the two maps
$a\mapsto  f(\tilde{c})\cap a$ and $a\mapsto  \tilde{d}\cap a$
coincide after passing to homology.
Therefore after passing to homology, the following square commutes
\begin{equation}\label{carre naturalite cap}
\xymatrix{D^\vee\ar[r]^{f^\vee}\ar[d]_{\tilde{d}\cap -}
& C^\vee\ar[d]^{\tilde{c}\cap -}\\
D
& C\ar[l]_{f}^\simeq}
\end{equation}
Since both $C$ and $D$ are ${\Bbbk}$-free and ${\Bbbk}$ is a principal ideal
domain, by naturality of the universal coefficient theorem for
cohomology, $H_*(f^\vee)$ is an isomorphism since
$H_*(f)$ is an isomorphism.
The proposition follows nows from the square~(\ref{carre naturalite cap}).
\end{proof}
\begin{theor}\label{cas connexe}
Let $M$ be a simply-connected oriented Poincar\'e duality space of formal dimension $d$.
Let $G$ be a topological group such that $M$ is a classifying space for $G$
or let $G$ be $\Omega M$ the (Moore) pointed loop space on $M$.
Let $[M]\in H_d(M)$ be its fundamental class.
Let $c$ the image of $[M]$ through the composite
$$
H_*(M)\buildrel{H_*(s)}\over\rightarrow H_*(LM)\buildrel{BFG^{-1}}\over\rightarrow
HH_*(S_*(G),S_*(G)).
$$
Then

a) The morphism of left $HH^*(S_*(G),S_*(G))$-modules
$$\mathbb{D}^{-1}:HH^p(S_*(G),S_*(G))\buildrel{\cong}\over\rightarrow HH_{d-p}(S_*(G),S_*(G)),
\quad a\mapsto a.c,$$
is an isomorphism.

b) The Gerstenhaber algebra $HH^*(S_*(G),S_*(G))$ equipped with the operator
$\Delta:=-\mathbb{D}\circ B\circ\mathbb{D}^{-1}$ is a Batalin-Vilkovisky algebra.
\end{theor}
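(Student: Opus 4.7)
The plan is to derive part (a) from Corollary~\ref{cas simplement connexe} applied to $A:=S_*(G)$ and to derive part (b) from Proposition~\ref{BV algebre homologie de Hochschild}, using the key point that the bar coalgebra $B(S_*(G))$ is quasi-isomorphic, as a differential graded coalgebra, to $S_*(M)$.

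\textbf{Part (a).} Since $G$ is path-connected (in both cases, $G$ topological group with classifying space $M$, or $G=\Omega M$ with $M$ simply-connected), $A=S_*(G)$ is an augmented non-negatively lower graded differential graded algebra with $H_0(\varepsilon)$ an isomorphism and each $A_i$ free. Thus the hypotheses of Corollary~\ref{cas simplement connexe} hold once we verify the cap-product assumption of Statement~\ref{iso donne par cap}. The classical Rothenberg--Steenrod/Eilenberg--Moore theorem gives a quasi-isomorphism of differential graded coalgebras $B(S_*(G))\buildrel{\simeq}\over\rightarrow S_*(M)$ (for the case $BG\simeq M$) and similarly, via the adjunction with the cobar construction, for $G=\Omega M$. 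Passing to homology yields the isomorphism
$$
\text{Tor}^A_*({\Bbbk},{\Bbbk})=H_*(B(S_*(G)))\cong H_*(M),
$$
under which the class $[m]=HH_d(A,\varepsilon)(c)$ is sent to the fundamental class $[M]$; indeed, by construction $c=BFG^{-1}\circ H_*(s)([M])$ and the composite of $HH_d(A,\varepsilon)$ with the above isomorphism is the natural map (coming from the bar resolution identification) that recovers $[M]$. Finiteness and boundedness of $\text{Tor}^A_*({\Bbbk},{\Bbbk})$ follow from $H_*(M)$ having the corresponding properties since $M$ is a finite-dimensional Poincar\'e duality space.

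To verify that $[m]\cap -:\text{Ext}^p_A({\Bbbk},{\Bbbk})\buildrel{\cong}\over\rightarrow \text{Tor}^A_{d-p}({\Bbbk},{\Bbbk})$ is an isomorphism, I would invoke Proposition~\ref{invariance homotopique algebre a dualite de Poincare} applied to the coalgebra quasi-isomorphism $B(S_*(G))\simeq S_*(M)$, together with Example~\ref{comparaison cap produit bar construction} which identifies the coalgebra cap product on $B(A)$ with the Hochschild cap product on $\mathcal{C}_*(A,{\Bbbk})\otimes\mathcal{C}^*(A,{\Bbbk})$. Since $M$ satisfies Poincar\'e duality, cap product with $[M]$ is an isomorphism $H^p(M)\to H_{d-p}(M)$, and Proposition~\ref{invariance homotopique algebre a dualite de Poincare} then transfers this to $B(A)$. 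Corollary~\ref{cas simplement connexe} (applied with $N=A$) then produces the Van den Bergh isomorphism $\mathbb{D}^{-1}:HH^p(A,A)\buildrel{\cong}\over\rightarrow HH_{d-p}(A,A)$, $a\mapsto c\cap a=(-1)^{|a||c|}a\cdot c$, which is precisely the map of part (a).

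\textbf{Part (b).} By Proposition~\ref{BV algebre homologie de Hochschild} it suffices to verify $B(c)=0$ in $HH_*(S_*(G),S_*(G))$. By the Burghelea--Fiedorowicz--Goodwillie theorem recalled in the introduction, $BFG\circ B=\Delta\circ BFG$, so $B(c)=0$ is equivalent to $\Delta\circ H_*(s)([M])=0$ in $H_{*+d}(LM)$. But the $\Delta$ operator on $H_*(LM)$ is induced by the $S^1$-action on $LM$ by rotation of loops, and the section $s:M\hookrightarrow LM$ sending a point to the corresponding constant loop lands in the $S^1$-fixed locus. Consequently $\Delta\circ H_*(s)=0$, and Proposition~\ref{BV algebre homologie de Hochschild} applies to give the desired Batalin-Vilkovisky structure.

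\textbf{Main obstacle.} The delicate point is the identification $\text{Tor}^A_*({\Bbbk},{\Bbbk})\cong H_*(M)$ \emph{as comodules over themselves} (i.e., as coalgebras), so that the cap-product isomorphism transports correctly; this is exactly what Proposition~\ref{invariance homotopique algebre a dualite de Poincare} is tailored to handle. A secondary technical point is that the classical coalgebra-level quasi-isomorphism $B(S_*(G))\simeq S_*(BG)$ must be invoked in the correct form: either by Rothenberg--Steenrod for a topological group $G$, or via Adams--Hilton/cobar--bar for $G=\Omega M$.
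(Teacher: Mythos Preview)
Your argument is correct and follows essentially the same route as the paper. The only points where the paper is more explicit are: the coalgebra comparison is actually a zigzag $B(S_*(G))\buildrel{\simeq}\over\leftarrow B(S_*(EG);S_*(G);{\Bbbk})\buildrel{\simeq}\over\rightarrow S_*(M)$ (so Proposition~\ref{invariance homotopique algebre a dualite de Poincare} is applied twice), and the identification $HH_d(S_*(G),\varepsilon)(c)=[m]$ is obtained via the commutative square relating $BFG$, $HH_*(S_*(G),\varepsilon)$, $H_*(ev)$ and the Moore isomorphism $\theta$, together with $ev\circ s=\mathrm{id}_M$.
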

Here $s$ denotes $s:M\hookrightarrow LM$ the
inclusion of the constant loops into $LM$
and $BFG$ is the isomorphism of graded ${\Bbbk}$-modules between the free loop space
homology of $M$ and the Hochschild homology of $S_*(G)$ introduced by
Burghelea, Fiedorowicz~\cite{Burghelea-Fiedorowicz:chak} and Goodwillie~\cite{Goodwillie:cychdfl}.
Finally $B$ denotes Connes boundary on
$HH_{*}(S_*(G),S_*(G))$.
\begin{rem}
We expect that the above theorem can be extended to any path-connected topological
monoid $G$ instead of just the topological monoid of pointed Moore loop spaces $\Omega M$
or instead of just any topological group.
\end{rem}
\begin{proof}
By~\cite[Proposition 6.13 in the case F=pt]{Felix-Halperin-Thomas:dgait} when $G$ is a topological group
or by~\cite[Theorem 6.3]{Felix-Halperin-Thomas:dgait} when $G=\Omega M$,
there exists a differential graded coalgebra $B(S_*(EG);S_*(G);{\Bbbk})$
and two quasi-isomorphisms of coalgebras
$$
B(S_*(G))\buildrel{\simeq}\over\leftarrow B(S_*(EG);S_*(G);{\Bbbk})\buildrel{\simeq}\over\rightarrow S_*(M).
$$
The induced isomorphism in homology is the well known isomorphism due to
Moore~\cite[Corollary 7.29]{McCleary:usersguide}
$$
\theta:\text{Tor}^{S_*(G)}({\Bbbk},{\Bbbk})= H_*(B(S_*(G)))\buildrel{\cong}\over\rightarrow H_*(M).
$$
Let $[m]\in H_*(B(S_*(G)))$ such that $\theta( [m])=[M]$.
By Proposition~\ref{invariance homotopique algebre a dualite de Poincare} and Example~\ref{comparaison cap produit bar construction},
the cap product with $[m]$, $[m]\cap -:B(S_*(G))^\vee\buildrel{\simeq}\over\rightarrow B(S_*(G))$,
$a\mapsto [m]\cap a$ is quasi-isomorphism.

Denote by $ev:LM\twoheadrightarrow M$, $l\mapsto l(0)$ the evaluation map.
The isomorphism $BFG$ of Goodwillie, Burghelea and Fiedorowicz fits into the commutative square.
$$
\xymatrix{
HH_*(S_*(G),S_*(G))\ar[r]^-{BFG}_-\cong\ar[d]_{HH_*(S_*(G),\varepsilon)}
&H_*(LM)\ar[d]_{H_*(ev)}\\
HH_*(S_*(G),{\Bbbk})\ar[r]^-{\theta}_-\cong
&H_*(M)
}
$$
Here $\varepsilon$ denote the augmentation of $S_*(G)$.
Let $c:=BFG^{-1}\circ H_d(s)([M])$.
Since $\sectiontriviale$ is a section of the evaluation map $ev$,
$HH_*(S_*(G),\varepsilon)(c)=[m]$.
So the hypotheses of statement~\ref{iso donne par cap}
are satisfied for $A=S_*(G)$.

Let $N$ be any non-negatively graded $S_*(G)$-bimodule.
Since $M$ is simply connected, by Corollary~\ref{cas simplement connexe},
we obtain that the morphism
$$
\mathcal{D}^{-1}:HH^p(S_*(G),N)\buildrel{\cong}\over\rightarrow
HH_{d-p}(S_*(G),N),\quad a\mapsto c\cap a
$$
is an isomorphism.
By taking $N=S_*(G)$ and by passing from a right action
to a left action by~(\ref{passage action gauche droite}),
we obtain a).

The isomorphism $BFG$ of Goodwillie, Burghelea and Fiedorowicz
satisfies $\Delta\circ BFG=BFG\circ B$.
Consider $M$ equipped with the trivial $S^1$-action.
The section 
$s:M\hookrightarrow LM$ is $S^1$-equivariant.
Since $$B(c)=B\circ BFG^{-1}\circ H_d(s)([M])=BFG^{-1}\circ \Delta\circ
H_d(s)([M])=0,$$
by Proposition~\ref{BV algebre homologie de Hochschild},
we obtain b).
\end{proof}
\section{Proof of the main theorem for discrete groups}
\begin{theor}\label{cas discret}
Let $G$ be a discrete group
such that its classifying space $K(G,1)$ is an oriented Poincar\'e duality space
of formal dimension $d$.
Let $[M]\in H_d(G,{\Bbbk})$ be a fundamental class.
Let $c$ be the image of $[M]$ by
$Tor^E_*(\eta,\eta):H_*(G,{\Bbbk})\rightarrow HH_*({\Bbbk}[G],{\Bbbk}[G])$
(Property~\ref{proprietes section} ii)).
Then 

a) The morphism of left $HH^*({\Bbbk}[G],{\Bbbk}[G])$-modules
$$\mathbb{D}^{-1}:HH^p({\Bbbk}[G],{\Bbbk}[G])
\buildrel{\cong}\over\rightarrow HH_{d-p}({\Bbbk}[G],{\Bbbk}[G]), a\mapsto a.c
$$
is an isomorphism.

b) The Gerstenhaber algebra $HH^*({\Bbbk}[G],{\Bbbk}[G])$
equipped with the operator $\Delta:=-\mathbb{D}\circ B\circ \mathbb{D}^{-1}$
is a Batalin-Vilkovisky algebra.
\end{theor}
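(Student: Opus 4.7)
The plan is to reduce part a) to classical Poincar\'e duality for the group $G$ via the comparison between Hochschild and group (co)homology established in Section~4, and then to deduce part b) from a) by verifying the hypothesis $B(c)=0$ of Ginzburg's Proposition~\ref{BV algebre homologie de Hochschild}.

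For a), I would apply Corollary~\ref{cap group egal cap Hochschild avec section} with $N={\Bbbk}[G]$ and $z=[M]$. That corollary identifies the right action $c\cap-=\sigma([M])\cap-$ on the Hochschild (co)homology of ${\Bbbk}[G]$ with the classical cap product $[M]\cap-$ on the group (co)homology of $G$ with coefficients in $\widetilde{{\Bbbk}[G]}$, up to the two vertical Eckmann-Shapiro isomorphisms $\text{Ext}^*_E(\eta,{\Bbbk}[G])$ and $\text{Tor}^E_*({\Bbbk}[G],\eta)$. Since $K(G,1)$ is an orientable Poincar\'e duality space of formal dimension $d$, classical Poincar\'e duality with local coefficients (applied to the ${\Bbbk}[G]$-module $\widetilde{{\Bbbk}[G]}$) gives that $[M]\cap-:H^p(G,\widetilde{{\Bbbk}[G]})\to H_{d-p}(G,\widetilde{{\Bbbk}[G]})$ is an isomorphism. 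Conjugating by the Eckmann-Shapiro isomorphisms therefore shows that the right action $c\cap-$ is an isomorphism, and a straightforward conversion from a right to a left action via formula~(\ref{passage action gauche droite}) yields a).

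For b), I would apply Proposition~\ref{BV algebre homologie de Hochschild} to $A={\Bbbk}[G]$ and to the class $c$ produced in a); it only remains to check that $B(c)=0$. By Property~\ref{proprietes section} i), $\sigma$ is a morphism of cyclic ${\Bbbk}$-modules, so it intertwines Connes boundary operators on source and target, and it suffices to see that Connes boundary vanishes on $[M]\in H_*(G,{\Bbbk})=H_*(B({\Bbbk}[G]))$. A clean way to obtain this is to use the Burghelea-Fiedorowicz-Goodwillie isomorphism $BFG$: via $BFG$ the class $c$ corresponds to the image of $[M]\in H_d(K(G,1))$ under the $S^1$-equivariant inclusion $\sectiontriviale:K(G,1)\hookrightarrow LK(G,1)$ of constant loops; because $K(G,1)$ carries the trivial $S^1$-action and $BFG$ exchanges $B$ and $\Delta$, we get $B(c)=BFG^{-1}(\Delta\,H_d(\sectiontriviale)[M])=0$, exactly as in the proof of Theorem~\ref{cas connexe}.

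The main technical obstacle I expect is bookkeeping: signs and left/right module conventions must be tracked carefully when one combines the two Eckmann-Shapiro identifications of Corollary~\ref{cap group egal cap Hochschild avec section} with the right-to-left action conversion of~(\ref{passage action gauche droite}), and one should verify that the identification $q:\widetilde{{\Bbbk}[G]}\otimes\widetilde{{\Bbbk}[G]}\twoheadrightarrow\widetilde{{\Bbbk}[G]\otimes_{{\Bbbk}[G]}{\Bbbk}[G]}\cong\widetilde{{\Bbbk}[G]}$ used in Proposition~\ref{comparaison cup cap Hochschild group cohomology} does give the module structure needed to apply Poincar\'e duality. Once these compatibilities are in place, a) is immediate from Poincar\'e duality for $G$, b) follows from a) and Proposition~\ref{BV algebre homologie de Hochschild}, and c) follows from b) by~\cite[Proposition 28]{Menichi:BV_Hochschild} as indicated in the introduction.
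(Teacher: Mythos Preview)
Your argument for part a) is essentially identical to the paper's: apply Corollary~\ref{cap group egal cap Hochschild avec section} to transport Poincar\'e duality in group (co)homology (with coefficients in $\widetilde{{\Bbbk}[G]}$, or more generally $\tilde N$ for any bimodule $N$) across the Eckmann--Shapiro isomorphisms, then convert right to left via~(\ref{passage action gauche droite}). The paper proceeds for arbitrary $N$ and only specializes to $N={\Bbbk}[G]$ at the end, but this is cosmetic.

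For part b) the skeleton is again the same---reduce to $B(c)=0$ via Property~\ref{proprietes section}~i) and then invoke Proposition~\ref{BV algebre homologie de Hochschild}---but your justification of $B([M])=0$ takes a different route. The paper simply cites a result of Karoubi (\cite[E.7.4.8]{LodayJ.:cych} or \cite[Theorem 9.7.1]{Weibel:inthomalg}) to the effect that Connes' $B$ vanishes on $HH_*({\Bbbk}[G],{\Bbbk})=H_*(G,{\Bbbk})$, which settles the matter in one line. Your proposal instead passes through the Burghelea--Fiedorowicz--Goodwillie isomorphism and the $S^1$-equivariance of the constant-loop section, mimicking the proof of Theorem~\ref{cas connexe}. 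That works in spirit, but it silently assumes the compatibility $BFG\circ\sigma=H_*(\sectiontriviale)$ (equivalently, that your $c=\sigma([M])$ coincides with $BFG^{-1}\!\circ H_d(\sectiontriviale)([M])$), which the paper never establishes in the discrete case; you would have to prove this separately. Karoubi's result avoids this detour entirely.

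Finally, there is no part c) in Theorem~\ref{cas discret}; the Lie-bracket statement you mention belongs to Theorem~\ref{structure BV sur la cohomologie de Hochschild de G} in the introduction and is not part of what is being proved here.
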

\begin{proof}
Let $N$ be any ungraded ${\Bbbk}[G]$-bimodule.
Since, by hypothesis, $G$ is orientable Poincar\'e duality group,
the cap product with $[M]$ in group (co)homology gives an isomorphism
(~\cite[10.1 iv), Remark 1 and Example 1 p. 222]{Brown:cohgro}, \cite[Th 15.3.1]{Geoghegan})
$$
[M]\cap -:H^p(G,\tilde{N})\buildrel{\cong}\over\rightarrow H_{d-p}(G,\tilde{N}), a\mapsto [M]\cap a.
$$
Therefore, by Corollary~\ref{cap group egal cap Hochschild avec section}, the cap product with $c=\sigma([M])$ in Hochschild (co)homology gives
the isomorphism
$$
c\cap -:HH^p({\Bbbk}[G],N)\rightarrow HH_{d-p}({\Bbbk}[G],N), a\mapsto c\cap a.
$$
Taking $N={\Bbbk}[G]$ and passing from a right action to left action as
in~(\ref{passage action gauche droite}), we obtain a).

By i) of Property~\ref{proprietes section}, $\sigma:H_*(G;{\Bbbk})\rightarrow HH_*({\Bbbk}[G],{\Bbbk}[G])$
commute with Connes boundary map $B$ on $H_*(G;{\Bbbk})$ and on $HH_*({\Bbbk}[G],{\Bbbk}[G])$.
By a well known result of Karoubi (for example~\cite[E.7.4.8]{LodayJ.:cych}
or~\cite[Theorem 9.7.1]{Weibel:inthomalg}), Connes boundary map $B$ is trivial on $H_*(G;{\Bbbk})$.
Therefore $B(c)=B\circ \sigma([M])=\sigma\circ B([M])=0$.
By applying Proposition~\ref{BV algebre homologie de Hochschild}, we obtain b).
\end{proof}
\begin{propriete}\label{cap produit est B-lineaire}
Let $A$ and $B$ be two algebras (differential graded if we want).
Let $N$ be an $(A,A\otimes B)$-bimodule.
Let $c\in HH_d(A,A)$.
Then 

i) $HH^*(A,N)$ and $HH_*(A,N)$ are two right $B$-modules and

ii) the cap product
$$
c\cap -:HH^p(A,N)\rightarrow HH_{d-p}(A,N),\quad a\mapsto c\cap a
$$
is a morphism of right $B$-modules.
\end{propriete}
\begin{proof}
Since $N$ is an $(A^e,B)$-bimodule,
$\mathcal{C}^*(A,N)\cong\text{Hom}_{A^e}(B(A;A;A),N)$ is a (differential
graded) right $B$-module and its homology $HH^*(A,N)$ is a right $B$-module.
Similarly $\mathcal{C}_*(A,N)\cong N\otimes_{A^e}B(A;A;A)$ and $HH_*(A,N)$
are two right $B$-modules.
Let $c$ be $a[a_1|\dots|a_n]\in \mathcal{C}_n(A,A)$.
Let $f\in \mathcal{C}^p(A,N)$.
By definition,
$
c\cap f:=\pm
a f([a_1|\dots|a_p])[a_{p+1}|\dots|a_n].
$
Therefore for any $b\in B$,
\begin{multline*}
(c\cap f)\cdot b=\pm
a f([a_1|\dots|a_p])b[a_{p+1}|\dots|a_n]=\\
\pm a (f\cdot b)([a_1|\dots|a_p])[a_{p+1}|\dots|a_n]=c\cap (f\cdot b).
\end{multline*}
\end{proof}
\begin{rem}\label{bimodule qui nous interessent}
We will be only interested in the case $N=A\otimes A$ and $B=A^e$.
Here the $A$-bimodule structure on $N$ is given by
$a\cdot (x\otimes y)\cdot b=ax\otimes yb$ and is called the {\it outer}
structure~\cite[(1.5.1)]{Ginzburg:CYalgebras}.
And the right $B$-module on $N$ is given by
$(x\otimes y)\cdot (a\otimes b)=xa\otimes by$, $x\otimes y\in N$, $a\otimes b\in B$ and is called the {\it inner} structure.
\end{rem}
\begin{defin}(\cite[Definition 3.2.3, (3.2.5), Remark 3.2.8]{Ginzburg:CYalgebras} or simply~\cite[Definition 2.1]{Berger-Taillefer})\label{definition algebre de Calabi-Yau}
An ungraded algebra $A$ is {\it Calabi-Yau} of dimension $d$ if

i) viewed as an $A$-bimodule over itself, $A$ admits a finite resolution
by finite type projective $A$-bimodules, i. e. there exists
an exact sequence of $A^e$-projective finite type module of the form
$$
0\rightarrow P_i\rightarrow P_{i-1}\rightarrow\dots\rightarrow P_1\rightarrow
P_0\rightarrow A\rightarrow 0,
$$

ii) for all $k\neq d$, $HH^k(A,A\otimes A)=0$ and

iii) as $(A,A)$-bimodule, $HH^d(A,A\otimes A)$ is isomorphic to $A$
(Here the $(A,A)$-bimodule on $HH^*(A,A\otimes A)$ is given by
Property~\ref{cap produit est B-lineaire}
and Remark~\ref{bimodule qui nous interessent}).
\end{defin}
\begin{proposition}(Stated without proof in~\cite[Remark 3.4.2]{Ginzburg:CYalgebras})\label{dualite de Van den Bergh implique Calabi-Yau}
Let $A$ be an ungraded algebra. Let $c\in HH_d(A,A)$.
Suppose that for every $A$-bimodule $N$,
$c\cap -:HH^p(A,N)\buildrel{\cong}\over\rightarrow HH_{d-p}(A,N)$, $a\mapsto c\cap a$, is an isomorphism. Then $A$ satisfies conditions ii) and iii) of Definition~\ref{definition algebre de Calabi-Yau}.
\end{proposition}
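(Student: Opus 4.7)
The plan is to apply the Van den Bergh duality hypothesis to the specific bimodule $N = A\otimes A$ equipped with the outer structure of Remark~\ref{bimodule qui nous interessent}. With this outer structure, $A\otimes A$ is nothing but the enveloping algebra $A^e$ viewed as a left module over itself, and so is free (in particular flat) of rank one. Since $B(A;A;A)$ is a projective resolution of $A$ over $A^e$, we obtain
\[
HH_n(A,A\otimes A)\;=\;\text{Tor}_n^{A^e}(A\otimes A,A)\;\cong\;\begin{cases} A & \text{if } n=0, \\ 0 & \text{otherwise,}\end{cases}
\]
the degree zero identification being induced by the canonical surjection $A\otimes A \twoheadrightarrow A\otimes_A A \cong A$.

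Applying the hypothesis with this $N$, the cap product with $c$ then gives a ${\Bbbk}$-linear isomorphism $HH^p(A,A\otimes A)\cong HH_{d-p}(A,A\otimes A)$ for every $p$. The right-hand side vanishes unless $p=d$, which is precisely condition~ii) of Definition~\ref{definition algebre de Calabi-Yau}. For $p=d$ we obtain in addition a ${\Bbbk}$-linear isomorphism $HH^d(A,A\otimes A)\cong A$, the skeleton of condition~iii).

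To upgrade the latter to an isomorphism of $(A,A)$-bimodules, I would invoke Property~\ref{cap produit est B-lineaire} with $B=A^e$: the cap product $c\cap -$ is then a morphism of right $A^e$-modules with respect to the inner $A^e$-action of Remark~\ref{bimodule qui nous interessent}. What remains is to identify the inner $A^e$-action on $HH_0(A,A\otimes A)$, transported through the canonical identification with $A$, with the tautological $(A,A)$-bimodule structure on $A$. This is a direct chain-level verification: the inner action $(x\otimes y)\cdot(a\otimes b)=xa\otimes by$ on $A\otimes A$ is compatible with the relations defining $A\otimes_A A$, and (after identifying $A\otimes_A A\cong A$ via $x\otimes y\mapsto yx$) descends to the standard bimodule action on $A$. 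The only, albeit minor, obstacle is precisely this last bookkeeping---reconciling the translation between right $A^e$-modules and $(A,A)$-bimodules (i.e.\ that right action by $a\otimes b\in A\otimes A^{op}$ corresponds to left multiplication by $b$ and right multiplication by $a$) with the Koszul-sign conventions of the cap product formula~(\ref{cap Hochschild}). Once settled, conditions~ii) and~iii) of Definition~\ref{definition algebre de Calabi-Yau} both follow from a single application of the Van den Bergh duality hypothesis to $N=A\otimes A$.
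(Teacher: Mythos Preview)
Your proof is correct and follows essentially the same approach as the paper: apply the duality hypothesis to $N=A\otimes A$ with the outer structure, use its freeness over $A^e$ to compute its Hochschild homology, and invoke Property~\ref{cap produit est B-lineaire} for the $A^e$-linearity. The paper's explicit identification $(A\otimes A)\otimes_{A^e}A\cong A$ is $(x\otimes y)\otimes_{A^e}m\mapsto ymx$, which matches your later formula $x\otimes y\mapsto yx$; your earlier phrasing via ``the canonical surjection $A\otimes A\twoheadrightarrow A\otimes_A A$'' is a slight misnomer (that quotient is multiplication $x\otimes y\mapsto xy$, not the $A^e$-coinvariants), but as you note this is only bookkeeping.
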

\begin{proof}
Let $N$ be a free $(A,A)$-bimodule. Then $HH_*(A,N)=0$ if $*\neq 0$.
Therefore $HH^k(A,N)=0$ if $k\neq d$.
Suppose moreover that $N$ is a $(A,A\otimes B)$-bimodule.
The quasi-isomorphism of complexes
$\mathcal{C}_*(A,N)\cong N\otimes_{A^e}B(A;A;A)
\buildrel{\simeq}\over\rightarrow N\otimes_{A^e}A$
is a morphism of right $B$-modules.
By Property~\ref{cap produit est B-lineaire},
$$
c\cap -:HH^d(A,N)\rightarrow HH_{0}(A,N)\cong N\otimes_{A^e} A
$$
is an isomorphism of right $B$-modules.

Let $N$ be the $(A,A)$-bimodule $A\otimes A$ with the outer structure
and $B=A^e$ (See Remark~\ref{bimodule qui nous interessent}).
Then $N\otimes_{A^e} A=(A\otimes A)\otimes_{A^e} A
\buildrel{\cong}\over\rightarrow A$, $(x\otimes y)\otimes_{A^e} m\mapsto ymx$
is an isomorphism whose inverse is the map mapping $a\mapsto (1\otimes 1)\otimes_{A^e} a$. A straightforward calculation shows that theses isomorphisms
are right $A^e$-linear. Therefore, we have proved that
$HH^d(A,A\otimes A)$ is isomorphic to $A$ as right $A^e$-modules.
\end{proof}
\begin{theor}
Let ${\Bbbk}$ be any commutative ring.
Let $G$ be a orientable Poincar\'e duality group of dimension $d$.
Then its group ring
${\Bbbk}[G]$ is a Calabi-Yau algebra of dimension $d$.
\end{theor}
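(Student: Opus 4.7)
The plan is to verify the three conditions of Definition~\ref{definition algebre de Calabi-Yau} in turn. Conditions ii) and iii) will follow almost immediately from what has already been proved in the paper, while condition i) requires a separate finite-resolution argument of a classical flavor.

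For ii) and iii), I would invoke Theorem~\ref{cas discret} a). Its proof, which goes through Corollary~\ref{cap group egal cap Hochschild avec section} and the Poincar\'e duality isomorphism $[M]\cap -:H^p(G,\tilde{N})\buildrel{\cong}\over\rightarrow H_{d-p}(G,\tilde{N})$ in group (co)homology, in fact establishes that for $c=\sigma([M])$ the cap product
$$c\cap -:HH^p({\Bbbk}[G],N)\buildrel{\cong}\over\rightarrow HH_{d-p}({\Bbbk}[G],N)$$
is an isomorphism for \emph{every} ${\Bbbk}[G]$-bimodule $N$, not only for $N={\Bbbk}[G]$. Applying Proposition~\ref{dualite de Van den Bergh implique Calabi-Yau} to $A={\Bbbk}[G]$ then delivers conditions ii) and iii) in one stroke.

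It remains to establish condition i). Since $G$ is an orientable Poincar\'e duality group of dimension $d$, it is of type $FP$ in the sense of Bieri--Eckmann: the trivial module $\mathbb{Z}$ admits a resolution of length $d$ by finitely generated projective $\mathbb{Z}[G]$-modules. Tensoring such a resolution with ${\Bbbk}$ over $\mathbb{Z}$ produces a finite resolution
$$0\to P_d\to P_{d-1}\to\cdots\to P_0\to {\Bbbk}\to 0$$
of ${\Bbbk}$ by finitely generated projective left ${\Bbbk}[G]$-modules. Recalling from Section~4 that ${\Bbbk}[G\times G^{op}]$ is free (hence flat) as a left ${\Bbbk}[G]$-module via the embedding $E$, and that $\tilde{\eta}:{\Bbbk}[G\times G^{op}]\otimes_{{\Bbbk}[G]}{\Bbbk}\cong {\Bbbk}[G]$, I would apply the exact functor ${\Bbbk}[G\times G^{op}]\otimes_{{\Bbbk}[G]}-$ to this resolution. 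The result is a finite resolution of ${\Bbbk}[G]$ by finitely generated projective ${\Bbbk}[G]$-bimodules, which is exactly condition i).

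The main obstacle is really just the bookkeeping in verifying condition i); the substantive duality content of conditions ii) and iii) is already packaged into Theorem~\ref{cas discret} and Proposition~\ref{dualite de Van den Bergh implique Calabi-Yau}. The $FP$ property of orientable Poincar\'e duality groups is classical (see, for instance, Brown's book on the cohomology of groups), so this obstacle reduces to invoking the right reference and checking that the Eckmann--Shapiro style base change along $E$ preserves both finite generation and projectivity of bimodules.
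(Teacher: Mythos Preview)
Your proposal is correct and follows essentially the same route as the paper: for conditions ii) and iii) you invoke the cap-product isomorphism from the proof of Theorem~\ref{cas discret} for arbitrary bimodules $N$ and then apply Proposition~\ref{dualite de Van den Bergh implique Calabi-Yau}, exactly as the paper does. For condition i) the paper cites \cite[Remark 2, p.~222]{Brown:cohgro} directly for a finite projective resolution of ${\Bbbk}$ over ${\Bbbk}[G]$ and then applies ${\Bbbk}[G\times G^{op}]\otimes_{{\Bbbk}[G]}-$, while you make the base change from $\mathbb{Z}$ to ${\Bbbk}$ explicit; this is a cosmetic difference only.
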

When ${\Bbbk}$ is a field of characteristic $0$
or of characteristic prime to the cardinal of $G$,
this theorem was proved
by Kontsevich~\cite[Corollary 6.1.4]{Ginzburg:CYalgebras}
and Lambre~\cite[Lemme 6.2]{Lambre:dualiteVdB}.
\begin{proof}
By~\cite[Remark 2. p. 222]{Brown:cohgro}, there exists
a finite resolution $P\buildrel{\simeq}\over\rightarrow {\Bbbk}$
of ${\Bbbk}$ by finite type projective ${\Bbbk}[G]$-left modules.
Then $X:={\Bbbk}[G\times G^{op}]\otimes_{{\Bbbk}[G]}P\buildrel{\simeq}\over\rightarrow {\Bbbk}[G]$ is a finite type resolution of  ${\Bbbk}[G]$
by finite type projective ${\Bbbk}[G]$-bimodules.

In the proof of Theorem~\ref{cas discret},we saw that for any
${\Bbbk}[G]$-bimodule $N$,
$c\cap -:HH^p({\Bbbk}[G],N)\buildrel{\cong}\over\rightarrow HH_{d-p}({\Bbbk}[G],N)$, $a\mapsto c\cap a$, is an isomorphism.
Therefore, by Proposition~\ref{dualite de Van den Bergh implique Calabi-Yau},
${\Bbbk}[G]$ is a Calabi-Yau algebra
of dimension $d$.
\end{proof}
\section{String topology of classifying spaces}
In~\cite{Chataur-Menichi:stringclass}, Chataur and the author,
and in~\cite{BGNX:Stringstacks},
Behrend, Ginot, Noohi and Xu developped a string topology theory
dual to Chas-Sullivan string topology.
\begin{theor}~\cite{BGNX:Stringstacks,Chataur-Menichi:stringclass}\label{BV structure sur cohomologie des lacets libres du classifiant}
Let $G$ be a path-connected compact Lie group of dimension $d$.
Denote by $BG$ its classifying space.
Then the shifted free loop space cohomology $H^{*+d}(LBG)$
is a (possibly non-unital) Batalin-Vilkovisky algebra.
\end{theor}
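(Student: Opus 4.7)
The plan is to prove this theorem as the Eckmann--Hilton/Koszul dual of Theorem~\ref{cas connexe}, by establishing a Van Den Bergh type isomorphism for the cochain algebra $A:=S^*(BG)$ and then invoking Proposition~\ref{BV algebre homologie de Hochschild} exactly as in the path-connected group case. Concretely, I would work with the augmented differential graded algebra $A=S^*(BG)$ (non-negatively upper graded, with $H^0(\varepsilon)\colon H^0(A)\cong{\Bbbk}$), aiming to produce a fundamental class $c\in HH_{-d}(A,A)$ whose cap product induces
$$
\mathcal{D}^{-1}\colon HH^p(A,A)\xrightarrow{\cong}HH_{-d-p}(A,A),\qquad a\mapsto c\cap a.
$$

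First I would construct $c$. Since $G$ is a path-connected compact Lie group of dimension $d$, the loop space $\Omega BG\simeq G$ is an orientable Poincar\'e duality space of formal dimension $d$, with fundamental class $[G]\in H_d(G)$. By the Eilenberg--Moore theorem, there is a quasi-isomorphism of coalgebras between the reduced bar construction $B(S^*(BG))$ and $S_*(\Omega BG)=S_*(G)$, so $[G]$ corresponds to a class $[m]\in\operatorname{Tor}^{A}_{-d}({\Bbbk},{\Bbbk})=H_{-d}(B(A))$. Using Proposition~\ref{invariance homotopique algebre a dualite de Poincare} together with Example~\ref{comparaison cap produit bar construction}, Poincar\'e duality on $G$ translates into the fact that $[m]\cap-\colon \operatorname{Ext}^p_A({\Bbbk},{\Bbbk})\xrightarrow{\cong}\operatorname{Tor}^A_{-d-p}({\Bbbk},{\Bbbk})$ is an isomorphism. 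I then lift $[m]$ to a class $c\in HH_{-d}(A,A)$ projecting to $[m]$ under $HH_{-d}(A,\varepsilon)$; a natural choice is the image of $[G]\cong H_d(s)([G])$ under some dual Burghelea--Fiedorowicz--Goodwillie type isomorphism $HH_*(S^*(BG),S^*(BG))\cong H^{-*}(LBG)$ composed with the inclusion of constant loops.

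With this $c$ in hand, Corollary~\ref{cas cochain connexe} (applicable since $A$ is non-negatively upper graded, ${\Bbbk}$ is a field, and $H^0(\varepsilon)$ is an iso) upgrades the bar-level duality to the bimodule-level statement
$$
c\cap-\colon HH^p(A,N)\xrightarrow{\cong}HH_{-d-p}(A,N)
$$
for any $A$-bimodule $N$; specializing to $N=A$ and converting the right action into a left action via~(\ref{passage action gauche droite}) yields part (a) of the Van Den Bergh isomorphism. Next I would verify $B(c)=0$, using $S^1$-equivariance of the constant loop section $s\colon BG\hookrightarrow LBG$ for the trivial $S^1$-action on $BG$, exactly as in the proof of Theorem~\ref{cas connexe}. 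Then Proposition~\ref{BV algebre homologie de Hochschild} endows the Gerstenhaber algebra $HH^*(S^*(BG),S^*(BG))$ with a Batalin--Vilkovisky structure via $\Delta:=-\mathcal{D}\circ B\circ\mathcal{D}^{-1}$. Finally, the linear identification $HH^*(S^*(BG),S^*(BG))\cong H^{*+d}(LBG)$ is obtained by combining Jones' isomorphism $H^*(LBG)\cong HH_{-*}(S^*(BG),S^*(BG)^\vee)$ with the Van Den Bergh duality $HH_{-d-p}(A,A)\cong HH^p(A,A)$ (after identifying $A^\vee$ appropriately, since $BG$ needs finite type cohomology for Jones' theorem to apply directly to $A$ rather than $A^\vee$). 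Transporting the $\Delta$ operator through this linear isomorphism gives a BV algebra structure on $H^{*+d}(LBG)$.

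The main obstacle is the construction and identification of the fundamental class $c\in HH_{-d}(A,A)$ together with checking that it satisfies the hypothesis of Statement~\ref{iso donne par cap}, which really is the content of the Poincar\'e duality of $G=\Omega BG$ transferred up through the bar construction; the sign and degree bookkeeping in passing from the lower-graded convention on $S_*(G)$ to the upper-graded convention on $S^*(BG)$ is delicate. The lack of a unit in the resulting BV structure reflects the fact that $HH^*(S^*(BG),S^*(BG))\cong H^{*+d}(LBG)$ is a shifted isomorphism, so the unit $1\in HH^0$ corresponds to a class in $H^d(LBG)$ rather than in $H^0(LBG)$, which is why the statement allows for non-unital BV algebras.
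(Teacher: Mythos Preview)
The statement you are asked to prove is not proved in the paper at all: it is quoted from the references~\cite{BGNX:Stringstacks,Chataur-Menichi:stringclass}, where the Batalin--Vilkovisky structure on $H^{*+d}(LBG)$ is constructed \emph{geometrically}, as the dual Chas--Sullivan theory. The product is the composite $H^*(LBG\times LBG)\xrightarrow{H^*(\tilde\Delta)} H^*(BG^{S^1\vee S^1})\xrightarrow{comp^!} H^{*-d}(LBG)$ and the $\Delta$-operator is the $S^1$-action on $LBG$; what those papers prove is that \emph{this} specific structure satisfies the BV axioms. Your proposal does not address this structure at all.

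What you actually sketch is the proof of a different result, namely Theorem~\ref{BV structure sur cohomologie de Hochschild du classifiant}: a BV structure on $HH^*(S^*(BG),S^*(BG))$ coming from a Van~den~Bergh type duality. You then say you will ``transport the $\Delta$ operator through this linear isomorphism'' $J\circ\mathbb D^{-1}$ to get a BV algebra on $H^{*+d}(LBG)$. But transporting along a mere linear isomorphism only produces \emph{some} BV structure, not the geometric one of the theorem; and the paper explicitly states right after Theorem~\ref{BV structure sur cohomologie de Hochschild du classifiant} that identifying these two BV structures is a \emph{conjecture}. So your last step is precisely the open problem, not a proof.

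There are also smaller gaps in your reconstruction of the Hochschild side. The class $[m]\in\operatorname{Tor}^{S^*(BG)}_{-d}(\Bbbk,\Bbbk)$ does not arise directly from $[G]\in H_d(G)$ via an Eilenberg--Moore coalgebra quasi-isomorphism $B(S^*(BG))\simeq S_*(G)$ as you write: one has $H(B(S^*(BG)))\cong H^*(G)$, and the paper instead uses that $H_*(\Omega BG)$ is a Frobenius algebra (finite-dimensional Hopf algebra) together with the bar/cobar duality $\psi:BS^*(BG)\xrightarrow{\simeq}(\Omega S_*(BG))^\vee$ of Proposition~\ref{dualite bar cobar}. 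Likewise your lift $c$ and the claim $B(c)=0$ do not come from the constant-loop section $s:BG\hookrightarrow LBG$ (the fundamental class lives on $G$, not $BG$); in the paper $c=J^{-1}(\mathbb I)$ where $\mathbb I\in H^d(LBG)$ is produced by an equivariant Gysin map (Lemma~\ref{remplacant unite de la BV-algebre}), and $\Delta\mathbb I=0$ is proved there by a separate argument.
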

The goal of this section is to prove the following theorem:
\begin{theor}\label{BV structure sur cohomologie de Hochschild du classifiant}
Let $G$ be a path-connected compact Lie group of dimension $d$.
Denote by $S^*(BG)$ the singular cochains on the classifying space of $G$.
Then

a) There exists an explicit isomorphism of left $HH^*(S^*(BG),S^*(BG))$-modules
$$\mathbb{D}^{-1}:HH^p(S^*(BG),S^*(BG))\buildrel{\cong}\over\rightarrow
HH_{-d-p}(S^*(BG),S^*(BG)).$$

b) The Gerstenhaber algebra $HH^*(S^*(BG),S^*(BG))$ equipped with the operator
$\Delta:=-\mathbb{D}\circ B\circ\mathbb{D}^{-1}$ is a Batalin-Vilkovisky algebra.
\end{theor}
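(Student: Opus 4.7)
The plan is to parallel the proof of Theorem~\ref{cas connexe} for path-connected topological groups, replacing Corollary~\ref{cas simplement connexe} by Corollary~\ref{cas cochain connexe} and applying Statement~\ref{iso donne par cap} to the cochain algebra $A=S^*(BG)$ instead of the chain algebra $S_*(G)$. First I verify the standing hypotheses of Corollary~\ref{cas cochain connexe}: ${\Bbbk}$ is a field by the standing assumption of Section 9, $A=S^*(BG)$ is non-negatively upper graded, and since $BG$ is path-connected, $H^0(\varepsilon):H^0(A)\buildrel{\cong}\over\rightarrow{\Bbbk}$.

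Next, since $G$ is connected, $BG$ is simply-connected and the Rothenberg-Steenrod / Eilenberg-Moore theorem provides an isomorphism of graded coalgebras
$$
\theta:\text{Tor}^{S^*(BG)}({\Bbbk},{\Bbbk})=H(B(S^*(BG)))\buildrel{\cong}\over\rightarrow H^*(\Omega BG)\cong H^*(G).
$$
Because $G$ is a closed oriented $d$-manifold, $H^*(G)$ satisfies Poincar\'e duality. Let $[m]\in\text{Tor}^{S^*(BG)}_{-d}({\Bbbk},{\Bbbk})$ be the class corresponding under $\theta$ to the top cohomology class of $G$. By Proposition~\ref{invariance homotopique algebre a dualite de Poincare} and Example~\ref{comparaison cap produit bar construction} applied to (a coalgebra model of) the quasi-isomorphism $\theta$, Poincar\'e duality on $G$ translates into the statement that cap product with $[m]$ is a quasi-isomorphism $B(S^*(BG))^\vee\to B(S^*(BG))$. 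This verifies the last hypothesis of Statement~\ref{iso donne par cap} with the Van den Bergh dimension taken to be $-d$. To produce a lift $c\in HH_{-d}(S^*(BG),S^*(BG))$ of $[m]$, I would use the constant-loop section $s:BG\hookrightarrow LBG$ together with a Jones / Burghelea-Fiedorowicz-Goodwillie style identification $HH_*(S^*(BG),S^*(BG))\cong H^{-*}(LBG)$ valid because $BG$ is simply connected. Corollary~\ref{cas cochain connexe} then yields (a), after converting the resulting right action into a left action via~(\ref{passage action gauche droite}).

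For (b), I apply Proposition~\ref{BV algebre homologie de Hochschild}: it suffices to show $B(c)=0$. This is expected to follow from the $S^1$-equivariance of the constant-loop section $s$ (with $BG$ carrying the trivial $S^1$-action), since the BFG-type isomorphism should intertwine Connes' operator $B$ on $HH_*$ with the $\Delta$ operator on $H^*(LBG)$. The main obstacle is the very construction of the fundamental class $c$: unlike the situation in Theorem~\ref{cas connexe}, $BG$ is infinite dimensional and carries no top cohomology class, so $c$ cannot simply be pulled back from $BG$ but must be built from the top class of the fiber $G$ via the free loop fibration $G\to LBG\to BG$. Producing a precise $S^1$-equivariant model of this construction and verifying both that its image in $HH_{-d}(S^*(BG),S^*(BG))$ lifts $[m]$ and that it is killed by $B$ is the key technical difficulty.
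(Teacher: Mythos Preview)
Your overall plan is correct and matches the paper's: verify the hypotheses of Statement~\ref{iso donne par cap} for $A=S^*(BG)$, apply Corollary~\ref{cas cochain connexe}, and then invoke Proposition~\ref{BV algebre homologie de Hochschild} once $B(c)=0$ is known. You also correctly isolate the real difficulty, namely the construction of the class $c\in HH_{-d}(S^*(BG),S^*(BG))$ lifting $[m]$ and satisfying $B(c)=0$: since $BG$ has no top class, the naive use of the constant-loop section $s$ cannot produce $c$, and you leave this unresolved.

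The paper fills exactly this gap via Lemma~\ref{remplacant unite de la BV-algebre}. The class is built not from $s$ but from the \emph{equivariant Gysin map} associated to the unit $\eta:\{e\}\hookrightarrow G$: one takes $\mathbb{I}:=H^*(\gamma)^{-1}\circ H^*(|\Phi|)\circ(EG\times_G\eta^!)(1)\in H^d(LBG)$ and sets $c:=J^{-1}(\mathbb{I})$ via the Jones isomorphism. The vanishing $\Delta\mathbb{I}=0$ (hence $B(c)=0$, since $J$ intertwines $B$ and $\Delta$) is imported from~\cite[(58)]{Chataur-Menichi:stringclass}, and the fact that $H^*(i)(\mathbb{I})$ generates $H^*(\Omega BG)$ as a free right $H_*(\Omega BG)$-module is precisely the Frobenius property of the finite-dimensional Hopf algebra $H_*(G)$. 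So the ``$S^1$-equivariant model'' you were looking for is this Gysin/wrong-way construction along the fibre inclusion $G\hookrightarrow LBG$.

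There is also a technical slip in your verification that $[m]\cap-$ is a quasi-isomorphism. You invoke Proposition~\ref{invariance homotopique algebre a dualite de Poincare} for ``a coalgebra model of $\theta$'', but no zigzag of \emph{coalgebra} quasi-isomorphisms between $B(S^*(BG))$ and a chain model of $G$ is supplied, and none is readily available. The paper proceeds differently: from the Frobenius property of $H_*(\Omega BG)$ it deduces (Property~\ref{algebre de Frobenius est une propriete homologique}) a right-module quasi-isomorphism $\theta:\Omega S_*(BG)\buildrel{\simeq}\over\rightarrow(\Omega S_*(BG))^\vee$, and then uses the bar--cobar adjunction of Proposition~\ref{dualite bar cobar} together with Property~\ref{forme bilineaire algebre coalgebre}~i) to obtain the commuting square
\[
\xymatrix{
\Omega S_*(BG)\ar[r]^-{\phi}_-{\simeq}\ar[d]_-{\theta}^-{\simeq}
& B(S^*(BG))^\vee\ar[d]^-{m\cap-}\\
(\Omega S_*(BG))^\vee
& B(S^*(BG))\ar[l]^-{\psi}_-{\simeq}
}
\]
from which the desired quasi-isomorphism $[m]\cap-$ follows. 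In short: your strategy is the paper's, but the two places where you wave your hands (producing $c$ with $B(c)=0$, and transporting the duality to $B(S^*(BG))$) are precisely where the paper supplies the substantive work, via Lemma~\ref{remplacant unite de la BV-algebre} and the Frobenius/bar--cobar machinery of \S9.
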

Both Batalin-Vilkovisky algebras in Theorems~\ref{BV structure sur cohomologie des lacets libres du classifiant} and~\ref{BV structure sur cohomologie de Hochschild du classifiant} are determined by an orientation class of $H_d(G)$.
In~\cite{JonesJ:Cycheh}, Jones gave
an isomorphism of graded vector spaces
$$
J:HH_*(S^*(BG),S^*(BG))\buildrel{\cong}\over\rightarrow H^*(LBG).
$$
Again, we conjecture that the isomorphism of graded vector spaces
$J\circ \mathbb{D}^{-1}:HH^*(S^*(BG),S^*(BG))\buildrel{\cong}\over\rightarrow
H^{*+d}(LBG)$ is a morphism of Batalin-Vilkovisky algebras.

Theorem~\ref{BV structure sur cohomologie de Hochschild du classifiant}
is the Eckmann-Hilton or Koszul dual of the following theorem
proved by Chataur and the author.
\begin{theor}\cite[Theorem 54]{Chataur-Menichi:stringclass}
Let $G$ be a path-connected compact Lie group of dimension $d$.
Denote by $S_*(G)$ the algebra of singular chains of $G$.
Consider Connes coboundary map $H(B^\vee)$
on the Hochschild cohomology of $S_*(G)$ with coefficients in its dual,
$HH^{*}(S_*(G);S^*(G))$.
then there is an isomorphism of graded vector spaces of upper degree $d$
$$\mathcal{D}^{-1}:HH^p(S_*(G);S_*(G))\buildrel{\cong}\over\rightarrow
HH^{p+d}(S_*(G);S^*(G))$$
such that the Gerstenhaber algebra $HH^*(S_*(G);S_*(G))$
equipped with the operator
$\Delta=\mathcal{D}\circ H(B^\vee)\circ \mathcal{D}^{-1}$ is a Batalin-Vilkovisky
algebra. 
\end{theor}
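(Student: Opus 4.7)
The plan is to follow the pattern of the Koszul dual theorem (Theorem~\ref{structure BV sur la cohomologie de Hochschild des cochaines}, proved in~\cite{Felix-Thomas-Vigue:Hochschildmanifold} and~\cite{Menichi:BV_Hochschild}), with the differential graded algebra $S_*(G)$ replacing the cochain algebra $S^*(M)$, and with the closed oriented $d$-manifold underlying $G$ playing the role of the Poincar\'e duality space $M$.

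First, I would construct $\mathcal{D}^{-1}$ as the map induced on Hochschild cohomology by a Poincar\'e duality quasi-isomorphism at the level of $(S_*(G),S_*(G))$-bimodules. Since $G$ is a path-connected compact Lie group of dimension $d$, its underlying topological space is a closed oriented $d$-manifold on which left and right translations are orientation-preserving; one can therefore choose a (strictly or homotopy) bi-invariant cycle $[G]\in S_d(G)$ representing the fundamental class, for instance by averaging against Haar measure on the chain level via the Eilenberg--Zilber shuffle map. Cap product with $[G]$ then provides a morphism of complexes of $(S_*(G),S_*(G))$-bimodules
$$
\cap [G]\,:\,S^*(G)\longrightarrow S_*(G)
$$
that decreases upper cohomological degree by $d$, and classical Poincar\'e duality on $G$ guarantees it is a quasi-isomorphism. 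Applying the functor $HH^*(S_*(G),-)$ to this bimodule quasi-isomorphism produces the desired degree-$d$ isomorphism
$$
\mathcal{D}\,:\,HH^{p+d}(S_*(G);S^*(G))\buildrel{\cong}\over\rightarrow HH^p(S_*(G);S_*(G)),
$$
whose inverse is $\mathcal{D}^{-1}$; this establishes part a).

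For part b), transport Connes' coboundary $B^\vee$ via $\mathcal{D}$ to the operator $\Delta:=\mathcal{D}\circ B^\vee\circ\mathcal{D}^{-1}$ on $HH^*(S_*(G);S_*(G))$. To prove the BV relation, I would develop the ``dual calculus'' analogue of Proposition~\ref{BV algebre homologie de Hochschild}: for the natural left action of $HH^*(S_*(G);S_*(G))$ on $HH^*(S_*(G);S^*(G))$ induced by the Hochschild cup product, one proves Tsygan-type relations $L^\vee_a=[B^\vee, i^\vee_a]$ and $i^\vee_{\{a,b\}}=(-1)^{|a|+1}[L^\vee_a, i^\vee_b]$. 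Combined with Proposition~\ref{equivalence BV derived bracket}, the BV identity then reduces to the single vanishing $B^\vee(\mathcal{D}^{-1}(1))=0$. The class $\mathcal{D}^{-1}(1)\in HH^d(S_*(G);S^*(G))$ is represented by a Hochschild cocycle of the form ``evaluate on the bi-invariant fundamental chain $[G]$'', and its $B^\vee$-triviality follows from the cyclic invariance of integration against a bi-invariant top-dimensional class.

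The principal obstacle I anticipate is the chain-level verification that $B^\vee$ annihilates the cocycle representing $\mathcal{D}^{-1}(1)$; although the underlying geometric fact (cyclic invariance of integration against a bi-invariant fundamental class) is transparent, making it precise at the Hochschild cochain level requires an explicit choice of representative and a direct calculation with the cyclic operator. A cleaner alternative route would combine the already-established VdB isomorphism $\mathbb{D}^{-1}$ of Theorem~\ref{cas connexe} with the finite-type linear duality $HH^n(A,A^\vee)\cong HH_n(A,A)^\vee$, valid over a field when $HH_*(A,A)$ is of finite type (which holds for $A=S_*(G)$ by compactness of $G$), and transport the BV structure established in Theorem~\ref{structure BV sur la cohomologie de Hochschild de G}(b) through this chain of isomorphisms; in that route the obstacle becomes the identification of the linear dual of Connes' $B$ on $HH_*(A,A)$ with Connes' $B^\vee$ on $HH^*(A,A^\vee)$, which is classical but requires a careful sign-by-sign statement.
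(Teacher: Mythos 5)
First, note that the paper does not prove this theorem: it is imported verbatim from \cite[Theorem 54]{Chataur-Menichi:stringclass}, and only its ingredients are visible here (Lemmas 55 and 57 and equation (58) of that reference, quoted in the proof of Lemma~\ref{remplacant unite de la BV-algebre}). Measured against that actual argument, your proposal has the right overall shape for part b) --- reducing the BV relation, via the calculus relations and Proposition~\ref{equivalence BV derived bracket}, to the single vanishing $B^\vee(\mathcal{D}^{-1}(1))=0$ is exactly the mechanism of \cite[Theorem 22]{Menichi:BV_Hochschild} --- but two of your key steps are problematic. The main gap is in part a): the topological cap product $\cap[G]:S^*(G)\to S_*(G)$ is built from the \emph{coalgebra} (diagonal) structure of $S_*(G)$, so it is a morphism of modules over the cup-product algebra $S^*(G)$, not over the Pontryagin algebra $S_*(G)$; bi-invariance of the cycle $[G]$ does not make it a map of $S_*(G)$-bimodules. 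What the source actually uses is the Frobenius structure of the Hopf algebra $H_*(G)$, i.e.\ the one-sided module map $a\mapsto \eta_!\cdot a$ where $\eta_!$ is the shriek map of the unit $\{e\}\hookrightarrow G$; by the Hopf-module structure theorem \cite[Theorem 5.1.2]{Sweedler:livre} this differs from Poincar\'e duality by the antipode (see the discussion in Section 9 of the present paper), so the two dualities you are conflating genuinely disagree. Moreover the source does not need a full bimodule quasi-isomorphism $S^*(G)\simeq S_*(G)$ at all: it establishes the one-sided isomorphism on $H_*(G)$ and promotes it to all coefficients by the filtration/inverse-limit argument of \cite[Proposition 11]{Menichi:BV_Hochschild}, the Koszul dual of Section 3 here.

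Your ``cleaner alternative route'' would fail outright: Theorem~\ref{cas connexe} (and Theorem~\ref{structure BV sur la cohomologie de Hochschild de G}) requires $BG$ to be a simply-connected Poincar\'e duality space of finite formal dimension, which is false for every positive-dimensional compact Lie group ($BS^1=\mathbb{CP}^\infty$); the present theorem and Theorem~\ref{cas connexe} live in Koszul-dual, mutually exclusive hypotheses, so there is no isomorphism $\mathbb{D}^{-1}$ to transport. Finally, on the anticipated obstacle $B^\vee(\mathcal{D}^{-1}(1))=0$: the source does not verify this by a chain-level cyclic-invariance computation on Hochschild cochains, but geometrically, by identifying $HH^*(S_*(G),S^*(G))$ with $H^*(LBG)$ and $B^\vee$ with the $\Delta$ operator there, and exhibiting the class $\mathbb{I}=\mathcal{D}^{-1}(1)$ as the image of $1$ under an $S^1$-equivariant Gysin map (cf.\ Lemma~\ref{remplacant unite de la BV-algebre}), whence $\Delta\mathbb{I}=0$. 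If you want to complete your route, that is the step to imitate rather than a direct computation with the cyclic operator.
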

\subsection{Frobenius algebras}.
\begin{defin}\label{definition algebre de Frobenius}
Let $A$ be a differential graded algebra.
We say that $A$ is a {\it Frobenius algebra}
if there is a quasi-isomorphism of right $A$-modules
$A\buildrel{\simeq}\over\rightarrow A^\vee$.
In particular, a graded algebra $A$ is a Frobenius algebra
if $A$ is isomorphic as right $A$-modules to its dual 
$A^\vee$.
\end{defin}
\begin{propriete}~\cite[Theorem 9.8]{Lu-Palmieri-Wu-Zhang}\label{algebre de Frobenius est une propriete homologique}
Let $A$ be a differential graded algebra.
Then $A$ is a Frobenius algebra if and only if its homology
$H(A)$ is a Frobenius algebra.
\end{propriete}
\begin{proof}
Let $M$ be any left $A$-module.
A straightforward computation shows that the linear map
$\mu:H(\text{Hom}(M,{\Bbbk}))\rightarrow
\text{Hom}(H(M),{\Bbbk})$ mapping a cycle $f:M\rightarrow{\Bbbk}$
to $H(f):H(M)\rightarrow {\Bbbk}$ is a morphism of right
$H(A)$-modules.
Since in this section, ${\Bbbk}$ is a field, by the universal coefficient
theorem for cohomology, this map $\mu$ is an isomorphism.
We are only interested in the case $M=A$.

Suppose that we have an quasi-isomorphism of right $A$-modules
$\Theta:A\buildrel{\simeq}\over\rightarrow A^\vee$. Then the composite
$H(A)\buildrel{H(\Theta)}\over\rightarrow H(A^\vee)
\buildrel{\mu}\over\rightarrow H(A)^\vee$ is an isomorphism
of right $H(A)$-modules.

Conversely, suppose that we have an isomorphism of right
$H(A)$-modules,
$\Theta:H(A)\buildrel{\cong}\over\rightarrow H(A)^\vee$.
Then the composite
$H(A)\buildrel{\Theta}\over\rightarrow H(A)^\vee
\buildrel{\mu^{-1}}\over\rightarrow H(A^\vee)$ is also an isomorphism
of right $H(A)$-modules.
Let $x$ be a cycle of $A^\vee$ such that
$\mu^{-1}\circ\Theta(1)=[x]$.The morphism of right $A$-modules
$A\rightarrow A^\vee$, $a\mapsto xa$, coincides in homology
with the isomorphism $\mu^{-1}\circ\Theta$.
\end{proof}
\begin{cor}
Let $A$ and $B$ be two differential graded algebras
such that $H(A)\cong H(B)$ as graded algebras.
Then $A$ is Frobenius if and only if $B$ is.
\end{cor}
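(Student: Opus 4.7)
The plan is to reduce the statement immediately to Property~\ref{algebre de Frobenius est une propriete homologique}, which characterizes the Frobenius property of a differential graded algebra in terms of its homology. According to that property, $A$ is Frobenius if and only if the graded algebra $H(A)$ is Frobenius, and similarly for $B$. So it suffices to establish that the Frobenius property of a graded algebra depends only on its isomorphism class as a graded algebra.

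More precisely, I would argue as follows. Suppose $\varphi:H(A)\buildrel{\cong}\over\rightarrow H(B)$ is an isomorphism of graded algebras. Via $\varphi$, every right $H(B)$-module $N$ pulls back to a right $H(A)$-module $\varphi^*N$, and this pullback functor is an equivalence of categories sending $H(B)$ (as a right module over itself) to $H(A)$. Moreover, taking the graded ${\Bbbk}$-dual, the transpose $\varphi^\vee:H(B)^\vee\rightarrow H(A)^\vee$ is an isomorphism of right $H(A)$-modules when $H(B)^\vee$ is viewed via $\varphi$. Hence an isomorphism of right $H(A)$-modules $H(A)\cong H(A)^\vee$ exists if and only if an isomorphism of right $H(B)$-modules $H(B)\cong H(B)^\vee$ exists.

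Combining these two observations gives the chain of equivalences
\[
A \text{ Frobenius} \Longleftrightarrow H(A) \text{ Frobenius} \Longleftrightarrow H(B) \text{ Frobenius} \Longleftrightarrow B \text{ Frobenius},
\]
where the outer equivalences come from Property~\ref{algebre de Frobenius est une propriete homologique} and the middle equivalence from the transport argument above. There is no real obstacle here; the only point that needs to be verified carefully is that the identification of modules via $\varphi$ is compatible with the graded ${\Bbbk}$-dual, so that an $H(A)$-linear isomorphism $H(A)\cong H(A)^\vee$ really does correspond to an $H(B)$-linear isomorphism $H(B)\cong H(B)^\vee$. Once this routine check is in place, the corollary follows in one line.
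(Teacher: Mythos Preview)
Your proposal is correct and follows exactly the route the paper intends: the paper states this corollary immediately after Property~\ref{algebre de Frobenius est une propriete homologique} without giving a proof, treating it as an obvious consequence of that property together with the evident isomorphism-invariance of the Frobenius condition for graded algebras. Your transport-of-structure argument along $\varphi$ is precisely the routine verification the paper leaves implicit.
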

Note that it is not necessary that there is a quasi-isomorphism
of algebras $f:A\buildrel{\simeq}\over\rightarrow B$
(Compare with Proposition~\ref{invariance homotopique algebre a dualite de Poincare} or~\cite[Corollary 9.9]{Lu-Palmieri-Wu-Zhang}).
\begin{propriete}\label{forme bilineaire algebre coalgebre}
Let $A$ be a graded algebra and let $C$ be a graded coalgebra.
Consider a bilinear form $<\;,\;>:C\otimes A:\rightarrow {\Bbbk}$.
Let $\phi:A\rightarrow C^\vee$, $a\mapsto <-,a>$
and let $\psi:C\rightarrow A^\vee$, $c\mapsto <c,->$ be the right
and left adjoints.
Suppose that $\phi$ is a morphism of graded algebras.
Then 

i) $\psi$ is a morphism of right $A$-modules
with respect to the cap
product~(\ref{definition cap product coalgebre})
associated to the coalgebra $C$
, i. e.
$\psi(c\cap\phi(a))=\psi(c).a$ for any $c\in C$ and $a\in A$.

ii) If $A$ is non-negatively graded and of finite type in each degre
then $\psi:C\rightarrow A^\vee$ is a morphism of graded coalgebras.
\end{propriete}
\begin{proof}
Let $\Delta c=\sum c'\otimes c"$ be the diagonal of $c$.
By definition, the cap product $c\cap\phi(a)$ is
equal to $\sum\pm <c',a>c"$.
Therefore $\psi(c\cap\phi(a))$ is the form on $A$, mapping
$x\in A$ to $\sum\pm <c',a><c",x>$.
On the other hand, $\psi(c).a$ is the form on $A$ mapping
$x\in A$ to $<c,ax>$. But $\psi$ is a morphism of algebras if only
and if for every $a$, $x\in A$ and $c\in C$,
$<c,ax>=\sum\pm <c',a><c",x>$.
\end{proof}
Let us give a well-known application of i) of
Property~\ref{forme bilineaire algebre coalgebre}.
Let $C=S_*(M)$ and $A=C^\vee=S^*(M)$.
We obtain that the quasi-isomorphism $\psi:S_*(M)\rightarrow
S^*(M)^\vee$ is a morphism of
$S^*(M)$-modules~\cite[Section 7]{Felix-Thomas-Vigue:Hochschildmanifold}.
Therefore by Poincar\'e duality, $S^*(M)$ is a Frobenius algebra.
And $H^*(M)$ also.
\subsection{String topology of manifolds}.
Let $M$ be a closed oriented $d$-dimensional smooth manifold.
Denote by $\mathbb{H}_*(M):=H_{*+d}(M)$.
Poincar\'e duality~\cite[Theorem 3.30]{Hatcher:algtop}
gives an isomorphism of graded algebras
$$H^*(M)\cong\mathbb{H}_{*}(M)$$ where

-the product on $H^*(M)$ is the cup product $H^*(\Delta)$,

-the product on $\mathbb{H}_{*}(M)$ is the intersection product $\Delta_!$
and

-the fundamental class $[M]\in H_d(M)$ is the unit of $\mathbb{H}_{*}(M)$.
 
Chas and Sullivan have defined a Batalin-Vilkovisky algebra on
$\mathbb{H}_*(LM):=H_{*+d}(LM)$.
The Chas-Sullivan loop product on $\mathbb{H}_*(LM)$
mixes the intersection product $\Delta_!$ on $\mathbb{H}_{*}(M)$
and the Pontryagin product $H_*(comp)$ on $H_*(\Omega M)$.

More precisely, let $\tilde{\Delta}:M^{S^1\vee S^1}\hookrightarrow LM\times LM$
be the inclusion map and let $comp:M^{S^1\vee S^1}\rightarrow LM$
be the map obtained by composing loops.
The Chas-Sullivan loop product is the composite
$$
H_*(LM\times LM)\buildrel{\tilde{\Delta}_!}\over\rightarrow
H_{*-d}(M^{S^1\vee S^1})\buildrel{H_*(comp)}\over\rightarrow
H_{*-d}(LM).
$$
The loop product admits $H_d(s)([M])$ as unit.
More generally $H_*(s):\mathbb{H}_*(M)\rightarrow\mathbb{H}_*(LM)$
is a morphism of algebras preserving the units.
Let $i:\Omega M\hookrightarrow LM$ be the inclusion of the pointed loops
into the free loops.
The shriek map of $i$, called the intersection map,
$i_!:\mathbb{H}_*(LM)\rightarrow H_*(\Omega M)$,
is also a morphism of algebras preserving the units~\cite[Proposition 3.4]{Chas-Sullivan:stringtop}.

The unit of the Batalin-Vilkovisky algebra $\mathbb{H}_*(LM)$
and the fact that $\Delta 1=0$ in any unital Batalin-Vilkovisky
algebras was the key for proving Theorem~\ref{cas connexe}.

\subsection{\bf Versus string topology of classifying spaces}.
Let $G$ be a path-connected Lie group of dimension $d$.
Denote by $\mathbb{H}^*(\Omega BG)=H^{*+d}(\Omega BG)$.
Since $H_*(\Omega BG)$ is a finite dimensional Hopf algebra,
$H_*(\Omega BG)$ is a Frobenius algebra:
%(Definition~\ref{definition algebre de Frobenius} below)
there is an isomorphism of right $H_*(\Omega BG)$-modules~\cite[Section 4.1]{Chataur-Menichi:stringclass}
$$\Theta:H_*(\Omega BG)\cong \mathbb{H}^*(\Omega BG).$$
By~\cite[Theorem 5.1.2, with left Hopf modules instead of right Hopf modules]{Sweedler:livre}, 
the composite of the antipode of the Hopf algebra $H_*(\Omega BG)$ and of $\Theta$, 
$
H_*(\Omega BG)\buildrel{S}\over\rightarrow 
H_*(\Omega BG)\buildrel{\Theta}\over\rightarrow H^*(\Omega BG)
$
is an isomorphism of left Hopf modules over $H_*(\Omega BG)$, and so coincides with
Poincar\'e duality.

Therefore this isomorphism $\Theta$ is an isomorphism of algebras
if

-the product on $H_*(\Omega BG)$ is the Pontryagin product $H_*(comp)$,

-the product on $\mathbb{H}^*(\Omega BG)$ is the composite
$$H^*(\Omega BG)\otimes H^*(\Omega BG)
\buildrel{\tau}\over\rightarrow H^*(\Omega BG)\otimes H^*(\Omega BG)
\buildrel{comp^!}\over\rightarrow H^{*-d}(\Omega BG)
$$
where $\tau$ denote the twist map given by $a\otimes b\mapsto
(-1)^{\vert a\vert\vert b\vert}b\otimes a$ and
$comp^!$ is the shriek map of $comp$.

Of course, $\Theta(1)$ is the unit of the algebra $\mathbb{H}^*(\Omega BG)$.

The product on $\mathbb{H}^*(LBG):=H^{*+d}(LBG)$
mixes the cup product $H^*(\Delta)$ on $H^*(BG)$
and the product $comp^!$ on $\mathbb{H}^*(\Omega BG)$.
More precisely, the product on $\mathbb{H}^*(LBG)$ is the
composite
$$
H^*(LBG\times LBG)\buildrel{H^*(\tilde{\Delta})}\over\rightarrow
H^*(BG^{S^1\vee S^1})\buildrel{comp^!}\over\rightarrow H^{*-d}(LBG).
$$
Comparing with the definition of the Chas-Sullivan loop product
defined above, we see a general principle. In order to pass from string topology of manifolds to string
topology of classifying spaces, you replace

-homology by cohomology,

-shriek map in homology like $\tilde{\Delta}_!$ by the map
induced in singular cohomology like $H^*(\tilde{\Delta})$,

-maps induced in singular homology like $H_*(comp)$
by shriek map in cohomology like $comp^!$.

\noindent In particular, you never change the direction of arrows.

Guided by this general principle, we now transpose the proof
of Theorem~\ref{cas connexe} into a proof of Theorem~\ref{BV structure sur cohomologie de Hochschild du classifiant}.
Using this general principle, the product on $\mathbb{H}^*(LBG)$
should have $s^!(1)$ as an unit.
More generally $s^!:H^*(BG)\rightarrow \mathbb{H}^*(LBG)$
should be a morphism of algebras preserving the units.
Also $H^*(i):\mathbb{H}^*(LBG)\rightarrow \mathbb{H}^*(\Omega BG)$
should be a morphism of algebras preserving the units.
The problem is that $s^!$ is not easy to define
~\cite[Remark 56]{Chataur-Menichi:stringclass}
and that we have not yet proved the previous assertions.
Instead, we are going only to prove the following lemma.
\begin{lem}\label{remplacant unite de la BV-algebre}
There exists an explicit element $\mathbb{I}\in H^d(LBG)$
such that $\Delta \mathbb{I}=0$
and such that the morphism of right $H_*(\Omega BG)$-modules,
$\Theta:H_p(\Omega BG)\buildrel{\cong}\over\rightarrow H^{d-p}(\Omega BG)$, $a\mapsto H^d(i)(\mathbb{I}).a$ is an isomorphism.
\end{lem}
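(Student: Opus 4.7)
The plan is to define $\mathbb{I}\in H^d(LBG)$ as $s^{!}(1)$, the image of $1\in H^{0}(BG)$ under the shriek map associated to the section $s\colon BG\hookrightarrow LBG$ of the evaluation fibration $ev\colon LBG\to BG$, whose fiber $\Omega BG\simeq G$ is an oriented closed $d$-dimensional manifold. Equivalently, exploiting the homotopy equivalence $LBG\simeq EG\times_{G}G$ (with $G$ acting on itself by conjugation), one takes $\mathbb{I}\in H^{d}(EG\times_{G}G)=H^{d}(LBG)$ to be the $G$-equivariant fundamental cohomology class of the fiber $G$; this class is well-defined because the adjoint action of a connected Lie group preserves orientation.

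Given such a construction, the first required property follows essentially tautologically. Under the identification above, the inclusion $i\colon \Omega BG\hookrightarrow LBG$ is just the inclusion of a fiber, so $H^{d}(i)(\mathbb{I})$ is the non-equivariant fundamental cohomology class $[G]^{*}\in H^{d}(\Omega BG)$. The discussion just before the lemma identifies $\Theta(1)$ with the unit of the algebra $\mathbb{H}^{*}(\Omega BG)$, which is exactly $[G]^{*}$; hence $H^{d}(i)(\mathbb{I})=\Theta(1)$. Since $\Theta$ is an isomorphism of right $H_{*}(\Omega BG)$-modules, for every $a$ one has $\Theta(a)=\Theta(1\cdot a)=\Theta(1)\cdot a=H^{d}(i)(\mathbb{I})\cdot a$, so the map appearing in the statement coincides with $\Theta$ and is an isomorphism.

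For the second property $\Delta\mathbb{I}=0$, one uses $S^{1}$-equivariance. The section $s$ is $S^{1}$-equivariant for the trivial $S^{1}$-action on $BG$ (rotation fixes constant loops), so $s^{!}$ intertwines the BV operator $\Delta$ on $H^{*}(LBG)$ with Connes' coboundary on $H^{*}(BG)$; the latter vanishes because the $S^{1}$-action on $BG$ is trivial. Hence $\Delta\mathbb{I}=\Delta s^{!}(1)=s^{!}(\Delta 1)=0$. As the paper itself emphasizes in the paragraph preceding the lemma, the main obstacle is that $s^{!}$ is delicate to construct rigorously; the real work is thus to make the construction of $\mathbb{I}$ precise and to verify its $S^{1}$-equivariance (one natural route is the equivariant-cohomology description $H^{*}_{G}(G)=H^{*}(LBG)$ combined with the orientation of $G$), after which the two required properties follow from the short arguments just given.
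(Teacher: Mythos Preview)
Your approach is essentially the same as the paper's, and the two descriptions you propose for $\mathbb{I}$ are exactly the ones the paper uses. The paper defines $\mathbb{I}$ as the image of $1$ under the equivariant Gysin map $EG\times_G\eta^{!}:H^{*}(BG)\to H^{*+d}(EG\times_G G^{ad})$ (followed by the identifications $H^{*}(LBG)\cong H^{*}(\vert\Gamma G\vert)\cong H^{*}(EG\times_G G^{ad})$); this is precisely your ``$G$-equivariant fundamental cohomology class of the fiber'' description, and under $LBG\simeq EG\times_G G^{ad}$ the map $EG\times_G\eta$ is identified with $s$, so it is also your $s^{!}(1)$. The paper deliberately avoids writing $s^{!}$ directly (cf.\ the remark you quote) and works instead with $EG\times_G\eta^{!}$, for which the needed properties are already established in \cite{Chataur-Menichi:stringclass}. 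For the Frobenius/isomorphism property, the paper argues exactly as you do, after first checking via a commutative diagram that $H^{*}(i)(\mathbb{I})$ corresponds to $\eta_{!}\in H^{d}(G)$ under $H^{*}(\bar\gamma)$, and then invoking that $H_{*}(G)\to H^{*}(G)$, $a\mapsto\eta_{!}\cdot a$, is an isomorphism.

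The one place where your outline is genuinely thinner than the paper is $\Delta\mathbb{I}=0$. You argue that $s$ is $S^{1}$-equivariant, hence $s^{!}$ intertwines the $\Delta$-operators. That implication is not automatic: wrong-way (Gysin) maps need not commute with operators coming from an $S^{1}$-action merely because the underlying continuous map is equivariant; one has to check compatibility with the cyclic structure at the level of the actual construction of the shriek map. The paper does not attempt this argument; it simply cites the specific computation \cite[(58)]{Chataur-Menichi:stringclass}, where the vanishing is verified directly for the equivariant Gysin construction. So your sketch is correct in spirit, but the step ``$s^{!}$ commutes with $\Delta$'' is precisely the place where the rigorous work sits, and the paper resolves it by appealing to that external result rather than by the heuristic you give.
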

As explained above, we believe that $\mathbb{I}$ is the unit
of the Batalin-Vilkovisky algebra $\mathbb{H}^*(LBG)$.
\begin{proof}
Let $\eta:\{e\}\rightarrow G$ be the unit of $G$.
Consider $\eta_!:H_d(G)\rightarrow {\Bbbk}$ the shriek map of $\eta$.
By Lemma 55 of~\cite{Chataur-Menichi:stringclass}, the morphism of right
$H_*(G)$-modules $H_p(G)\buildrel{\cong}\over\rightarrow H^{d-p}(G)$,
$a\mapsto \eta_!.a$, is an isomorphism.
Consider the commutative diagram of graded algebras
$$
\xymatrix{
H^*(LBG)\ar[r]^{H^*(\gamma)}_\cong\ar[d]_{H^*(i)}
& H^*(\vert\Gamma G\vert)\ar[d]_{H^*(\vert j\vert)}
& H^*(EG\times_G G^{ad})\ar[l]_-{H^*(\vert \Phi)\vert}^-\cong
\ar[dl]^{H^*(E\eta\times_\eta G^{ad})}\\
H^*(\Omega BG)\ar[r]^{H^*(\bar{\gamma})}_\cong
& H^*(G)
}$$
where the right triangle is the triangle considered in the proof of
Theorem 54 of~\cite{Chataur-Menichi:stringclass}
and the left square is induced by the following
commutative square of topological spaces 
\xymatrix{
G\ar[r]^{\vert j\vert}\ar[d]^{\bar{\gamma}}_\simeq
& \vert\Gamma G\vert\ar[d]^{\gamma}_\simeq\\
\Omega BG\ar[r]^i
&LBG
}
considered in the proof of Theorem 7.3.11 of~\cite{LodayJ.:cych}.
Consider the equivariant Gysin map:
$EG\times_G\eta^!:H^*(BG)\rightarrow H^{*+d}(EG\times_G G^{ad})$.
Let $\mathbb{I}$ be the image of $1$ by the composite $H^*(\gamma)^{-1}\circ H^*(\vert\Phi\vert)\circ EG\times_G\eta^!$.
In~\cite[(58)]{Chataur-Menichi:stringclass}, we saw that $\Delta\mathbb{I}=0$.
By Lemma 57 of~\cite{Chataur-Menichi:stringclass}, 
$ H^*(E\eta\times_\eta G^{ad})$ maps $EG\times_G\eta^!(1)$ to $\eta_!\in H_d(G)^\vee$. Therefore using the above commutative diagram, 
$H^*(i)(\mathbb{I})=H^*(\bar{\gamma})^{-1}(\eta_!)$.

By Lemma 7.3.12 of~\cite{LodayJ.:cych}, $\bar{\gamma}:G\buildrel{\simeq}\over\rightarrow \Omega BG$ is the classical homotopy equivalence which is well-known
to be a morphism of $H$-spaces.
Therefore the isomorphism induced in homology, $H_*(\bar{\gamma}):H_*(G)\buildrel{\cong}\over\rightarrow H_*(\Omega BG)$, is a morphism of algebras.
Since $H_*(G)$ is a Frobenius algebra, $H_*(\Omega BG)$ is also a Frobenius algebra. More precisely, the morphism of right $H_*(\Omega BG)$-modules
$\Theta:H_p(\Omega BG)\rightarrow H_{d-p}(\Omega BG)^\vee$, $a\mapsto H^*(\bar{\gamma})^{-1}(\eta_!).a$ is an isomorphism.
\end{proof}
To finish the proof of Theorem~\ref{BV structure sur cohomologie de Hochschild du classifiant}, we need also the following algebraic results.
\subsection{Bar and Cobar construction}
Let $C$ be a coaugmented DGC. Denote by $\overline{C}$ the kernel of the counit.
The normalized {\it cobar construction on $C$}, denoted $\Omega C$, 
is the augmented differential graded algebra
$\left(T(s^{-1}\overline{C}),d_1+d_2\right)$
where $d_1$ and $d_2$ are the unique derivations determined by
$$d_1s^{-1}c=-s^{-1}dc\mbox{ and}$$
$$d_2s^{-1}c=\sum_i (-1)^{\vert x_i\vert} s^{-1}x_i\otimes s^{-1}y_i,\; c\in\overline{C}$$
where the reduced diagonal $\displaystyle\overline{\Delta}c=\sum_i x_i\otimes y_i$. We follow the sign convention of \cite{Felix-Halperin-Thomas:Adamce}.

\begin{rem}~\cite[(A.6)]{Halperin:unieal}\label{extension bilinear form}
A bilinear form $<\;,\;>:V\otimes W\rightarrow {\Bbbk}$ of graded vector spaces
extends a bilinear form $<\;,\;>:TV\otimes TW\rightarrow {\Bbbk}$
defined by
$$<v_1\otimes\dots\otimes v_i, w_1\otimes\dots\otimes w_i>=\pm\prod_{j=1}^i <v_j,w_j>$$
and $<v_1\otimes\dots\otimes v_i, w_1\otimes\dots\otimes w_j>=0$
if $i\neq j$. Here again $\pm$ is the sign given by the Koszul sign convention.
\end{rem}
\begin{proposition}\label{dualite bar cobar}
Let $C$ be a coaugmented differential graded coalgebra.
Denote by $A:=C^\vee$ the differential graded algebra dual of $C$.
Let $<\;,\;>:sA\otimes s^{-1}C\rightarrow {\Bbbk}$ be the non-degenerate
bilinear form defined in~\cite[p. 276 in the case $V=s^{-1}C$ and $X=A$]{Halperin:unieal}
by $<sa,s^{-1}c>=(-1)^{\vert a \vert+1}a(c)$.
Consider the bilinear form $<\;,\;>:BA\otimes \Omega C\rightarrow {\Bbbk}$
extending $<\;,\;>:sA\otimes s^{-1}C\rightarrow {\Bbbk}$ (Remark~\ref{extension bilinear form}). Then

i) the right adjoint $\phi:\Omega C\rightarrow BA^\vee$ is a natural
morphism of differential graded algebras
and the left adjoint $\psi:BA\rightarrow \Omega C^\vee$ is a natural
morphism of complexes,

ii) if $C$ is of finite type in each degre and $C={\Bbbk}\oplus C_{\geq 2}$
then both $\phi$ and $\psi$ are isomorphisms,
 
iii) if $H(C)$ is of finite type in each degre and $C={\Bbbk}\oplus C_{\geq 2}$
then both $H(\phi)$ and $H(\psi)$ are isomorphisms.
\end{proposition}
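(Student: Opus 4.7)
The plan is to prove (i) by direct verification that the extended pairing (Remark~\ref{extension bilinear form}) is adjoint for the two differentials and that $\phi$ is multiplicative, then to obtain (ii) from a dimension count exploiting the connectivity assumption, and finally to reduce (iii) to (ii) by a minimal-model type resolution.

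\textbf{Proof of (i).} The extended pairing vanishes on $(sA)^{\otimes n}\otimes (s^{-1}C)^{\otimes m}$ when $n\neq m$, so when checking $\langle dz,x\rangle=\pm\langle z,dx\rangle$ on generators $z=[a_1|\cdots|a_n]$ and $x=[c_1|\cdots|c_m]$, both sides vanish unless $|n-m|\leq 1$. For $n=m$ only the internal pieces $d_0$ on $BA$ and $d_1$ on $\Omega C$ contribute, and they match via $\langle s\,d_A a,s^{-1}c\rangle = \pm\langle sa,s^{-1}d_C c\rangle$. When $m=n-1$ the multiplicative piece $d_1$ of the bar differential on $BA$ must pair term-by-term with the comultiplicative piece $d_2$ of the cobar differential on $\Omega C$; it is precisely the shift sign $\langle sa,s^{-1}c\rangle=(-1)^{|a|+1}a(c)$ inherited from~\cite{Felix-Halperin-Thomas:Adamce} that makes the two sums match. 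Once $\phi$ and $\psi$ are known to be chain maps, the multiplicativity of $\phi$ follows from Property~\ref{forme bilineaire algebre coalgebre}: the product on $\Omega C$ is concatenation and the coproduct on $BA$ is deconcatenation, so on a fixed length-$n$ tensor $[a_1|\cdots|a_n]$ only one term of $\Delta[a_1|\cdots|a_n]$ can pair non-trivially with a concatenated product $xy\in\Omega C$ whose lengths sum to $n$, producing exactly the product of pairings with $x$ and $y$. The fact that $\psi$ is then a chain map (rather than only a coalgebra morphism) is the dual statement.

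\textbf{Proof of (ii) and (iii).} Under $C={\Bbbk}\oplus C_{\geq 2}$ with $C$ of finite type, $s^{-1}\overline{C}$ is concentrated in degrees $\geq 1$, so any fixed degree of $\Omega C$ is a finite direct sum of tensor powers of finite-dimensional pieces; the symmetric statement holds for $BA$. The pairing restricted to length-$n$ tensors is the $n$-th tensor power of the perfect pairing on $sA\otimes s^{-1}C$, hence perfect, so $\phi$ and $\psi$ are isomorphisms in every degree, proving (ii). For (iii), since ${\Bbbk}$ is a field I would choose a quasi-isomorphism $\tilde{C}\buildrel{\simeq}\over\rightarrow C$ with $\tilde{C}$ itself finite-type and $\tilde{C}={\Bbbk}\oplus\tilde{C}_{\geq 2}$ (a minimal model of $C$ has this form); by naturality of $\phi$ and $\psi$ one gets commutative squares in which the $\tilde{C}$-side is an isomorphism by (ii), and under the connectivity hypothesis both $\Omega(\tilde{C}\to C)$ and $B(C^\vee\to\tilde{C}^\vee)$ are quasi-isomorphisms (via the standard spectral sequence associated to the tensor-length filtration), forcing $H(\phi)$ and $H(\psi)$ to be isomorphisms.

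\textbf{Main obstacle.} The delicate point is the sign matching in the chain-map verification of (i): one must align the multiplicative terms of the bar differential with the comultiplicative terms of the cobar differential, where both sides carry Koszul signs together with the shift sign $(-1)^{|a|+1}$, and showing that the two resulting sums cancel term-by-term is the calculation that underpins everything else. Once this sign bookkeeping is in place, the algebra structure statement for $\phi$ and the two finite-type arguments are routine.
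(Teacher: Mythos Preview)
Your argument is correct and follows the same overall architecture as the paper: verify (i) by a direct pairing/sign check, deduce (ii) from finite-dimensionality in each degree via the connectivity hypothesis, and obtain (iii) by replacing the input by a finite-type model and invoking naturality together with the fact that, under the connectivity assumption, $B$ and $\Omega$ preserve quasi-isomorphisms.

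The one substantive difference is in the construction of the finite-type model for (iii). You propose to take a coalgebra minimal model $\tilde C\buildrel{\simeq}\over\rightarrow C$ directly. The paper instead works on the algebra side: it invokes \cite[Proposition~4.2]{Felix-Halperin-Thomas:Adamce} to produce a quasi-isomorphism of augmented DGAs $f:(TV,d)\buildrel{\simeq}\over\rightarrow C^\vee$ with $V=V^{\geq 2}$ of finite type, and then uses Property~\ref{forme bilineaire algebre coalgebre}~(ii) to show that the adjoint $g:C\rightarrow (TV)^\vee$ is a quasi-isomorphism of coaugmented coalgebras. Setting $D:=(TV)^\vee$, the paper then runs the naturality square for $\psi$ (and similarly for $\phi$) exactly as you do. The advantage of the paper's route is that the existence of the finite-type model is an off-the-shelf citation on the algebra side, whereas the existence of a finite-type coalgebra model mapping \emph{into} $C$ is less standard to quote directly; of course one way to justify your $\tilde C$ is precisely to dualize an algebra model, which brings you back to the paper's construction. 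For (i) the paper does not write out the sign computation either, referring instead to the analogous verification for the cyclic cobar complex in~\cite[Proof of Theorem~6.1~ii)]{MenichiL:cohrfl}; your sketch of the term-by-term matching is in the same spirit.
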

\begin{proof}
i) and ii) Denote by $TAW$ the tensor algebra on $W$, and by $TCV$ the
tensor coalgebra on $V$~\cite[p. 277-8]{Halperin:unieal}.
It is easy to check that the right adjoint map $\phi:TAW\rightarrow TCV^\vee$
of the bilinear map defined by Remark~\ref{extension bilinear form} is a morphism of graded algebras.
In~\cite[Proof of Theorem 6.1 ii)]{MenichiL:cohrfl}, we have checked carefully
that $\psi:\mathcal{C}_*(A,A)\rightarrow (C\otimes\Omega C,\delta)^\vee$,
where $(C\otimes\Omega C,\delta)$ is the cyclic cobar complex of $C$,
is a morphism of complexes and an isomorphism if $C$ is of finite type
in each degre and $\overline{C}=\overline{C}_{\geq 2}$.
The same proof shows that this is also the case for
$\psi:BA\rightarrow\Omega C^\vee$.

iii) By Proposition 4.2 of~\cite{Felix-Halperin-Thomas:Adamce},
there exists a differential graded algebra of the form $(TV,d)$
where $V=V^{\geq 2}$ is of finite type in each degre and a quasi-isomorphism of
augmented differential graded algebras
$f:TV\buildrel{\simeq}\over\rightarrow C^\vee$.
By ii) of Property~\ref{forme bilineaire algebre coalgebre},
the adjoint map $g:C\buildrel{\simeq}\over\rightarrow (C^\vee)^\vee\build\rightarrow_\simeq^{f^\vee} TV^\vee$ is a quasi-isomorphism of coaugmented
differential graded coalgebras~\cite[p. 56]{Felix-Menichi-Thomas:GerstduaiHochcoh}. Denote by $D:=TV^\vee$.

Since $\overline{C}_{\leq 1}=\overline{D}_{\leq 1}=0$, by Remark 2.3 of~\cite{Felix-Halperin-Thomas:Adamce},
$\Omega f:\Omega C\buildrel{\simeq}\over\rightarrow\Omega D$ is a quasi-isomorphism of augmented differential graded algebras.
Since ${\Bbbk}$ is a field,
$f^\vee:D^\vee\buildrel{\simeq}\over\rightarrow C^\vee$ is also a quasi-isomorphism
of augmented differential graded algebras.
By naturality of $\psi$, we have the commutative square of complexes
\xymatrix{
B(C^\vee)\ar[r]^\psi
&(\Omega C)^\vee\\
B(D^\vee)\ar[r]^\psi_\cong\ar[u]^{B(f^\vee)}_\simeq
&(\Omega D)^\vee\ar[u]_{(\Omega f)^\vee}^\simeq
}
where the two vertical morphisms are quasi-isomorphisms.
By ii), $\psi:B(D^\vee)\buildrel{\cong}\over\rightarrow(\Omega D)^\vee$
is an isomorphism. Therefore $\psi:B(C^\vee)\buildrel{\simeq}\over\rightarrow(\Omega C)^\vee$ is a quasi-isomorphism.
Similarly, one proves that $\phi:\Omega C\buildrel{\simeq}\over\rightarrow B(C^\vee)^\vee$ is also a quasi-isomorphism.
\end{proof}
\begin{proof}[Proof of Theorem~\ref{BV structure sur cohomologie de Hochschild du classifiant}]
The Eilenberg Moore formula gives an isomorphism of graded algebras
$\mathcal{EM}:H_*(\Omega BG)\buildrel{\cong}\over\rightarrow
H(\Omega S_*(BG))$.
By Proposition~\ref{dualite bar cobar} iii),
$\psi:BS^*(BG)\buildrel{\simeq}\over\rightarrow \Omega S_*(BG)^\vee$
is a quasi-isomorphism of complexes.
The Jones isomorphism $J$ fits into the commutative diagram
$$
\xymatrix{
HH_*(S^*(BG),S^*(BG))\ar[rr]^{J}_\cong\ar[d]_{HH_*(S^*(BG),\varepsilon)}
&& H^*(LBG)\ar[d]^{H^*(i)}\\
\text{Tor}^{S^*(BG)}({\Bbbk},{\Bbbk})\ar[r]^{H(\psi)}_\cong
& H(\Omega S_*(BG))^\vee\ar[r]^-{\mathcal{EM}^\vee}_-\cong
&H^*(\Omega BG)
}$$
Consider the element $\mathbb{I}\in H^d(LBG)$ given by
Lemma~\ref{remplacant unite de la BV-algebre}. Let $c$ be $J^{-1}(\mathbb{I})\in HH_{-d}(S^*(BG),S^*(BG))$.
Denote by $m\in BS^*(BG)$ a cycle such that its class $[m]$ is
equal to $HH_{-d}(S^*(BG),\varepsilon)(c)$.

Since $H_*(\Omega BG)$ is a Frobenius algebra, $H(\Omega S_*(BG))$ is
also a Frobenius algebra.
More precisely, by Lemma~\ref{remplacant unite de la BV-algebre},
the morphism of right $H_*(\Omega BG)$-modules
$H_p(\Omega BG)\buildrel{\cong}\over\rightarrow H_{d-p}(\Omega BG)^\vee$
mapping $1$ to $ H^d(i)(\mathbb{I})$ is an isomorphism.
Therefore the morphism of right $H(\Omega S_*(BG))$-modules
$H_p(\Omega S_*(BG))\buildrel{\cong}\over\rightarrow H_{d-p}(\Omega S_*(BG))^\vee$
mapping $1$ to $(\mathcal{EM}^\vee)^{-1}\circ H^d(i)(\mathbb{I})$ is an isomorphism.
Since the above diagram is commutative,
$(\mathcal{EM}^\vee)^{-1}\circ H^d(i)(\mathbb{I})=H(\psi)([m])$.
By Property~\ref{algebre de Frobenius est une propriete homologique}, the differential graded algebra $\Omega S_*(BG)$ is a Frobenius algebra.
More precisely, the morphism of right $\Omega S_*(BG)$-modules
$\theta:\Omega S_*(BG)\buildrel{\simeq}\over\rightarrow (\Omega S_*(BG))^\vee$,
$a\mapsto \psi(m).a$ is a quasi-isomorphism.

By Proposition~\ref{dualite bar cobar},
$\phi:\Omega S_*(BG)\buildrel{\simeq}\over\rightarrow BS^*(BG)^\vee$
is a quasi-isomorphism of differential graded algebras.
Therefore by i) of Property~\ref{forme bilineaire algebre coalgebre},
the following square of complexes commutes.
$$
\xymatrix{
\Omega S_*(BG)\ar[r]^\phi_\simeq\ar[d]_\theta^\simeq
&(BS^*(BG))^\vee\ar[d]^{m\cap -}_\simeq\\
(\Omega S_*(BG))^\vee
&BS^*(BG)\ar[l]^\psi_\simeq
}$$
Therefore (Example~\ref{comparaison cap produit bar construction}),
$$
[m]\cap -:\text{Ext}^p_{S^*(BG)}({\Bbbk},{\Bbbk})
\buildrel{\cong}\over\rightarrow \text{Tor}_{-d-p}^{S^*(BG)}({\Bbbk},{\Bbbk})
$$
is an isomorphism.

Let $N$ be any non-negatively upper graded $S^*(BG)$-bimodule.
Since $BG$ is path-connected, by Corollary~\ref{cas cochain connexe},
we obtain that the morphism
$$
\mathcal{D}^{-1}:HH^p(S^*(BG),N)\buildrel{\cong}\over\rightarrow
HH_{-d-p}(S^*(BG),N),\quad a\mapsto c\cap a
$$
is an isomorphism.
By taking $N=S^*(BG)$ and by passing from a right action
to a left action by~(\ref{passage action gauche droite}),
we obtain a).

The isomorphism $J$ of Jones
satisfies $\Delta\circ J=J\circ B$.
Since by Lemma~\ref{remplacant unite de la BV-algebre},
$$B(c)=B\circ J^{-1}(\mathbb{I})=J^{-1}\circ \Delta(\mathbb{I})=0,$$
by Proposition~\ref{BV algebre homologie de Hochschild},
we obtain b).

\end{proof}
\section{Appendix}
The key of the proof of Proposition~\ref{BV algebre homologie de Hochschild} is the relation
$$
i_{\{a,b\}}=(-1)^{\vert a\vert+1}[[B,i_{a}],i_b]=[[i_{a},B],i_b].
$$
In this appendix, we recall that $[[i_{a},B],i_b]$ is the {\it derived bracket} of $i_{a}$ and $i_b$
and we explain that this relation means that the morphism of graded algebras
$$HH^*(A,A)\rightarrow \text{End}(HH_*(A,A)),\quad a\mapsto i_a,$$
is a morphism of generalized Loday-Gerstenhaber algebras
(Theorem~\ref{hochschild morphism de gerstenhaber})
\begin{defin}\cite[p. 1247]{Kosmann-Schwarzbach:PoissontoGerst}\label{definition algebre de Loday_Gerstenhaber}
A {\it generalized Loday-Gerstenhaber algebra} is a (not necessarily commutative) graded algebra $A$
equipped with a linear map
$\{-,-\}:A_i \otimes A_j \to A_{i+j+1}$ of degree $1$
such that:

\noindent a) the bracket $\{-,-\}$ gives $A$ a structure of graded
Leibniz algebra of degree $1$. This means that for each $a$, $b$ and $c\in A$

$\{a,\{b,c\}\}=\{\{a,b\},c\}+(-1)^{(\vert a\vert+1)(\vert b\vert+1)}
\{b,\{a,c\}\}.$

\noindent b)  the product and the Leibniz bracket satisfy the following relation
called the Poisson relation:
$$\{a,bc\}=\{a,b\}c+(-1)^{(\vert a\vert+1)\vert b\vert}b\{a,c\}.$$
\end{defin}
\begin{proposition}\label{Gerstenhaber algebra associe a une dga}
Let $A$ be a graded algebra equipped with an operator
$d:A_n\rightarrow A_{n+1}$ such that $d\circ d=0$ and such
that $d$ is a derivation.
Then $A$ equipped with the {\it derived bracket} defined
by~\cite[(2.8)]{Kosmann-Schwarzbach:PoissontoGerst} 
$$[a,b]_d:=(-1)^{\vert a\vert +1}[da,b]$$
is a generalized Loday-Gerstenhaber algebra.
\end{proposition}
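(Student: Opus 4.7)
The plan is to deduce both axioms of a generalized Loday-Gerstenhaber algebra from two structural properties of the graded commutator $[-,-]$ on $A$: first, that $[-,-]$ is itself a graded Lie bracket of degree $0$ (graded antisymmetry and graded Jacobi); second, that it is a biderivation of the associative product, i.e.
$$
[a,bc] = [a,b]c + (-1)^{|a||b|}b[a,c].
$$
Applied to $da$ (of degree $|a|+1$), this biderivation property immediately yields the Poisson relation for $[-,-]_d$: multiplying the identity $[da, bc] = [da,b]c + (-1)^{(|a|+1)|b|}b[da,c]$ by $(-1)^{|a|+1}$ and comparing signs gives
$$
[a,bc]_d = [a,b]_d\,c + (-1)^{(|a|+1)|b|}b\,[a,c]_d.
$$

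For the Leibniz (Jacobi) identity I would first establish the auxiliary formula $d[a,b]_d = [da, db]$. Because $d$ is a derivation of the associative product and commutes (up to sign) with itself in the formation of commutators, one has $d[da,b] = (-1)^{|a|+1}[da,db]$, using $d^2=0$ to kill the $[d^2a, b]$ term; multiplying by $(-1)^{|a|+1}$ gives the claim. Consequently,
$$
[[a,b]_d,c]_d = (-1)^{|a|+|b|}[[da,db], c],
$$
while a direct expansion yields
$$
[a,[b,c]_d]_d = (-1)^{|a|+|b|}[da, [db,c]],\qquad [b,[a,c]_d]_d = (-1)^{|a|+|b|}[db, [da,c]].
$$
After these substitutions, the desired Leibniz identity for $[-,-]_d$ reduces, after matching the signs $(-1)^{(|a|+1)(|b|+1)}$, to the graded Jacobi identity for the ordinary commutator $[-,-]$ applied to the triple $(da, db, c)$.

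The only real obstacle is careful sign-bookkeeping: every step involves shifting degrees by $d$, and one must check that the prefactors $(-1)^{|a|+1}$, $(-1)^{(|a|+1)|b|}$ and $(-1)^{(|a|+1)(|b|+1)}$ combine exactly as required by Definition~\ref{definition algebre de Loday_Gerstenhaber}. Once the auxiliary identity $d[a,b]_d = [da,db]$ and the biderivation property of $[-,-]$ are in hand, both axioms follow from routine algebraic manipulation; no further input is needed, so $A$ equipped with $[-,-]_d$ is a generalized Loday-Gerstenhaber algebra. Note that graded antisymmetry is \emph{not} claimed here, which is consistent with the fact that $[a,b]_d$ need not be antisymmetric even when $d$ is a derivation of an honest graded Lie bracket.
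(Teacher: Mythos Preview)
Your argument is correct and follows essentially the same route as the paper: both reduce the Poisson relation for $[-,-]_d$ to the biderivation property of the graded commutator applied to $(da,b,c)$, and both reduce the Leibniz identity for $[-,-]_d$ to the graded Jacobi identity for the ordinary commutator applied to $(da,db,c)$, using only that $d$ is a derivation with $d^2=0$. The one presentational difference is that the paper invokes \cite[Proposition~2.1]{Kosmann-Schwarzbach:PoissontoGerst} directly for the Leibniz identity, whereas you unpack that step via the clean auxiliary formula $d[a,b]_d=[da,db]$; this is a nice way to organize the sign-tracking but not a genuinely different strategy.
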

\begin{proof}
Since $A$ is an associative graded algebra, the bracket $[-,-]$ defined
by $$[a,b]:=ab-(-1)^{\vert a\vert\vert b\vert}ba,$$ is a Lie bracket.
Since $d$ is a derivation for the associative product of $A$,
$d$ is a derivation for the Lie bracket $[-,-]$.
Therefore by~\cite[Proposition 2.1]{Kosmann-Schwarzbach:PoissontoGerst},
the derived bracket $[-,-]_d$ satisfies the graded Jacobi identity
and $d$ is a derivation for the derived bracket $[-,-]_d$. Since  $[-,-]_d$ does not satisfy in general anticommutativity,
$[-,-]_d$ is only a Leibniz bracket in the sense of Loday~\cite{Loday:algebreLeibniz}, and not a Lie bracket in general.
The Lie bracket $[-,-]$ satisfies the Poisson relation:
$$[a,bc]=[a,b]c+(-1)^{(\vert a\vert+1)\vert b\vert}b[a,c].$$
Therefore since $[a,-]_d$ is the derivation $(-1)^{\vert a\vert +1}[da,-]$, the Leibniz
bracket $[-,-]_d$ also satisfies the Poisson
relation~\cite[Proposition 2.2]{Kosmann-Schwarzbach:PoissontoGerst}:
$$[a,bc]_d=[a,b]_dc+(-1)^{(\vert a\vert+1)\vert b\vert}b[a,c]_d.$$
\end{proof}
\begin{rem}\label{crochet de Lie associe a une adg}
In Proposition~\ref{Gerstenhaber algebra associe a une dga}, if instead, we define the bracket by
$$[a,b]_d:=ad(b)-(-1)^{(\vert a\vert +1)(\vert b\vert +1)}bd(a)
$$
then $[-,-]_d$ satisfies anti-commutativity and Jacobi: $[-,-]_d$ is a Lie bracket
\footnote{We could not find this Lie bracket in the litterature. So this Lie algebra structure might
be new.} of degre $+1$.
But this time, $[-,-]_d$ does not satisfy the Poisson relation.
Note that again $d$ is a derivation for $[-,-]_d$.
\end{rem}
\begin{proof}
Let $a\in A_{x-1}$, $b\in B_{y-1}$ and $c\in C_{z-1}$ be three elements of $A$ of
degres $x-1$, $y-1$ and $z-1$. Then
\begin{multline*}
[a,[b,c]_d]_d=ad(bdc)-(-1)^{zy}ad(cdb)\\-(-1)^{xy+xz}b(dc)(da)+(-1)^{xy+xz+yz}c(db)(da),
\end{multline*}
\begin{multline*}
[[a,b]_d,c]_d=a(db)(dc)-(-1)^{xy}b(da)(dc)\\+(-1)^{zx+zy}cd(adb)+(-1)^{zx+zy+xy}cd(bda)
\end{multline*}
and
\begin{multline*}
(-1)^{xy}[b,[a,c]_d]_d=(-1)^{xy}bd(adc)-(-1)^{xy+xz}bd(cda)\\-(-1)^{yz}a(dc)(db)+(-1)^{yz+xz}c(da)(db).
\end{multline*}
%\begin{align*}
%[a,[b,c]_d]_d&=ad(bdc)-(-1)^{zy}ad(cdb)\\&-(-1)^{xy+xz}b(dc)(da)+(-1)^{xy+xz+yz}c(db)(da),\\
%[[a,b]_d,c]_d&=a(db)(dc)-(-1)^{xy}b(da)(dc)\\&+(-1)^{zx+zy}cd(adb)+(-1)^{zx+zy+xy}cd(bda)
%\end{align*}
%and
%\begin{align*}
%(-1)^{xy}[b,[a,c]_d]_d&=(-1)^{xy}bd(adc)-(-1)^{xy+xz}bd(cda)\\&-(-1)^{yz}a(dc)(db)+(-1)^{yz+xz}c(da)(db).
%\end{align*}
Since $d$ is a derivation and $d^2=0$, $d(adb)=(da)(db)$. Therefore we have the Jacobi identity:
$$
[a,[b,c]_d]_d=[[a,b]_d,c]_d+(-1)^{xy}[b,[a,c]_d]_d.
$$
Since $[da,b]_d=(da)(db)$ and $[a,db]_d=-(-1)^{x(y+1)}(db)(da)$,
$$
d([a,b]_d)=(da)(db)-(-1)^{xy}(db)(da)=[da,b]_d+(-1)^x[a,db]_d.
$$
This means that $d$ is a derivation for $[-,-]_d$.
\end{proof}
\begin{ex}(interior derivation)\label{interior derivation}
Let $A$ be an associative graded algebra.
Let $\tau\in A_1$ such that $\tau^2=0$.
Then $d:=[\tau,-]$ is a derivation of the associative product and $d\circ d=0$.
Therefore, we can apply the previous proposition.
In this case, we denote the derived bracket $[a,b]_d$ simply by $[a,b]_\tau$
and~\cite[Example p. 1250]{Kosmann-Schwarzbach:PoissontoGerst}
$$
[a,b]_\tau=(-1)^{\vert a\vert+1}[[\tau,a],b]=[[a,\tau],b].
$$
\end{ex}
\begin{cor}~\cite[Beginning of Section 2.4]{Kosmann-Schwarzbach:PoissontoGerst}\label{endomorphism Gerstenhaber}
Let $E$ be a graded ${\Bbbk}$-module equipped with an operator
$B:E_n\rightarrow E_{n+1}$ such that $B\circ B=0$.
Then $\text{End}(E)$ equipped with the derived bracket
$[a,b]_B=[[a,B],b]$ is a generalized Loday-Gerstenhaber algebra.
\end{cor}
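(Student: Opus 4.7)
The plan is to deduce this corollary as an immediate application of Example~\ref{interior derivation}, and hence of Proposition~\ref{Gerstenhaber algebra associe a une dga}, to the graded associative algebra $A:=\text{End}(E)$ together with the element $\tau:=B\in \text{End}(E)_1$. First I would observe that composition makes $(\text{End}(E),\circ)$ into a graded associative algebra, and that the hypothesis $B\circ B=0$ is exactly the condition $\tau^2=0$ required by Example~\ref{interior derivation}. The example then provides an interior derivation $d:=[B,-]$ of degree $+1$ on $\text{End}(E)$ which satisfies $d\circ d=0$.

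Next I would verify that the derived bracket produced by Proposition~\ref{Gerstenhaber algebra associe a une dga} coincides with the formula stated in the corollary. By definition,
$$[a,b]_d=(-1)^{|a|+1}[da,b]=(-1)^{|a|+1}[[B,a],b].$$
Using graded antisymmetry of the commutator, $[B,a]=-(-1)^{|a|}[a,B]$, so $(-1)^{|a|+1}[B,a]=[a,B]$ and therefore $[a,b]_d=[[a,B],b]=[a,b]_B$, which matches the formula in the statement.

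Finally, Proposition~\ref{Gerstenhaber algebra associe a une dga} guarantees that $(\text{End}(E),\circ,[-,-]_B)$ satisfies the graded Jacobi identity in the Leibniz sense and the Poisson relation with respect to composition; that is, it is a generalized Loday-Gerstenhaber algebra in the sense of Definition~\ref{definition algebre de Loday_Gerstenhaber}.

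There is essentially no obstacle here: the corollary is a tautological specialization, and the only small piece of care required is the sign reconciliation between $(-1)^{|a|+1}[[B,a],b]$ and $[[a,B],b]$, which is already performed inside Example~\ref{interior derivation}.
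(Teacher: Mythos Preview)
Your proof is correct and follows exactly the same approach as the paper: apply Proposition~\ref{Gerstenhaber algebra associe a une dga} and Example~\ref{interior derivation} to $A=\text{End}(E)$ with $\tau=B$. The paper's proof is a single sentence to this effect; your added sign reconciliation is already contained in Example~\ref{interior derivation}.
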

\begin{proof}
Apply Proposition~\ref{Gerstenhaber algebra associe a une dga}
and Example~\ref{interior derivation}, to  $\text{End}(E)$
equipped with the composition product.
\end{proof}
\begin{theor}(implicit in~\cite[p. 1269-70 pointed by Krasilshchik]{Kosmann-Schwarzbach:PoissontoGerst})\label{injection d'une BV-algebre dans End}
Let $A$ be a Batalin-Vilkovisky algebra.
The morphism of graded algebras induced by left multiplication
$$
\Psi:A\rightarrow \text{End}(A), a\mapsto l_a
$$
is an injective morphism of generalized Loday-Gerstenhaber algebras.
\end{theor}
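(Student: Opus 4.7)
The associative-algebra part and injectivity are essentially free, so the only real content is bracket compatibility, which is already packaged in Proposition~\ref{equivalence BV derived bracket}. I will first dispose of the easy assertions and then match the brackets carefully.

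First, $\Psi$ is a morphism of associative graded algebras: for $a,b\in A$ and $x\in A$ one has $l_{ab}(x)=(ab)x=a(bx)=(l_a\circ l_b)(x)$ by associativity of the product on $A$, so $\Psi(ab)=\Psi(a)\circ\Psi(b)$, and $\Psi(1)=l_1=\mathrm{id}_A$. Injectivity comes from the existence of a unit: if $\Psi(a)=l_a=0$, then $a=a\cdot 1=l_a(1)=0$. Both of these steps hold at the level of graded associative algebras with no BV input.

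The core step is showing that $\Psi$ preserves the Loday–Gerstenhaber bracket. I equip $\mathrm{End}(A)$ with the derived-bracket structure of Corollary~\ref{endomorphism Gerstenhaber}, where the square-zero degree $+1$ operator is taken to be $-\Delta\in\mathrm{End}(A)_1$ (viewed as an element of the associative algebra $\mathrm{End}(A)$ via composition). By Example~\ref{interior derivation}, the resulting Leibniz bracket on $\mathrm{End}(A)$ reads
\begin{equation*}
[f,g]_{-\Delta}=[[f,-\Delta],g]=-[[f,\Delta],g].
\end{equation*}
On the other hand, Proposition~\ref{equivalence BV derived bracket} (applied to the BV algebra $A$) gives exactly
\begin{equation*}
l_{\{a,b\}}=-[[l_a,\Delta],l_b]
\end{equation*}
for all $a,b\in A$. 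Combining these two identities yields $\Psi(\{a,b\})=l_{\{a,b\}}=[l_a,l_b]_{-\Delta}=[\Psi(a),\Psi(b)]_{-\Delta}$, which is the required bracket compatibility. Thus $\Psi$ is a morphism of generalized Loday–Gerstenhaber algebras in the sense of Definition~\ref{definition algebre de Loday_Gerstenhaber}.

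The only subtle point is the choice of sign: one must take $-\Delta$ (not $\Delta$) as the square-zero operator producing the derived bracket on $\mathrm{End}(A)$, so that the two sign conventions (the $(-1)^{\vert a\vert}$ in equation~(\ref{relation BV}) and the sign in the derived bracket of Example~\ref{interior derivation}) cancel. There is no deeper obstacle; once the sign convention is fixed, the result is a direct transcription of the \emph{only if} half of Proposition~\ref{equivalence BV derived bracket} into the language of morphisms, which is exactly the observation of Kosmann-Schwarzbach recalled in the statement.
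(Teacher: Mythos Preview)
Your proof is correct and follows essentially the same approach as the paper: apply Corollary~\ref{endomorphism Gerstenhaber} with the square-zero operator $-\Delta$ to put the derived-bracket structure on $\mathrm{End}(A)$, then invoke Proposition~\ref{equivalence BV derived bracket} to obtain $l_{\{a,b\}}=-[[l_a,\Delta],l_b]=[l_a,l_b]_{-\Delta}$. You are in fact slightly more thorough, since you spell out the easy associative-algebra morphism property and the injectivity via the unit, both of which the paper's proof leaves implicit.
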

\begin{proof}
Since $A$ is a graded module equipped with an operator $\Delta:A_n\rightarrow A_{n+1}$
such that $\Delta\circ \Delta=0$, by Corollary~\ref{endomorphism Gerstenhaber}
applied to $A$ and to $B=-\Delta$, $\text{End}(A)$ equipped with the
{\it derived bracket} $[f,g]_{-\Delta}=[[f,-\Delta],g]$ is a generalized
Loday-Gerstenhaber algebra. By Proposition~\ref{equivalence BV derived bracket},
$$
l_{\{a,b\}}=-[[l_a,\Delta],l_b]=[[l_a,B],l_b]
$$
Therefore $\Psi$ is a morphism of generalized Loday-Gerstenhaber algebra
\end{proof}
\begin{theor}\label{hochschild morphism de gerstenhaber}
Let $A$ be a differential graded algebra.
Then
 
1) $\text{End} HH_*(A,A)$ equipped with the derived bracket
$$[a,b]_B=[a,B],b]$$ is a generalized Loday-Gerstenhaber algebra.

2) The morphism of graded algebras induced by the action
$$\Phi:HH^*(A,A)\rightarrow \text{End}HH_*(A,A),\quad a\mapsto i_a,$$
is a morphism of generalized Loday-Gerstenhaber algebra.
In particular, its image $\Phi(HH^*(A,A))$ is a Gerstenhaber algebra.
\end{theor}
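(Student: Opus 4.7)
The plan is to reduce both parts to results already established in the paper, namely Corollary~\ref{endomorphism Gerstenhaber}, equation~(\ref{action a gauche}), and equation~(\ref{gerstenhaber bracket=derived bracket}).

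For part 1), Connes boundary $B$ is a degree $+1$ operator on $HH_*(A,A)$ with $B\circ B=0$. Applying Corollary~\ref{endomorphism Gerstenhaber} to the graded ${\Bbbk}$-module $E=HH_*(A,A)$ equipped with $B$ gives directly that $\text{End}(HH_*(A,A))$, equipped with the composition product and the derived bracket $[f,g]_B=[[f,B],g]$, is a generalized Loday-Gerstenhaber algebra. Nothing else is needed here.

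For part 2), I would verify separately that $\Phi$ preserves products and brackets. Preservation of products, $\Phi(a\cup b)=\Phi(a)\circ\Phi(b)$, is exactly the left action identity $(a\cup b)\cdot c = a\cdot(b\cdot c)$ of equation~(\ref{action a gauche}); by definition $i_a(c)=a\cdot c$, so $i_{a\cup b}=i_a\circ i_b$, which says $\Phi$ is a morphism of graded algebras. Preservation of brackets is precisely equation~(\ref{gerstenhaber bracket=derived bracket}), which asserts $i_{\{a,b\}}=[[i_a,B],i_b]=[\Phi(a),\Phi(b)]_B$. Together with part 1) and the fact that $HH^*(A,A)$, being a Gerstenhaber algebra, is in particular a generalized Loday-Gerstenhaber algebra, this shows $\Phi$ is a morphism of generalized Loday-Gerstenhaber algebras.

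For the final assertion that the image $\Phi(HH^*(A,A))$ is a Gerstenhaber algebra (and not merely a generalized Loday-Gerstenhaber algebra), the point is that $\Phi$ preserves both the graded commutative product $\cup$ and the graded antisymmetric Lie bracket $\{-,-\}$ of the Gerstenhaber algebra $HH^*(A,A)$. Hence on the image, which is isomorphic to $HH^*(A,A)/\ker\Phi$ with $\ker\Phi$ a two-sided ideal for both $\cup$ and $\{-,-\}$, the induced product is still graded commutative and the induced bracket is still graded antisymmetric and satisfies Jacobi and Poisson. I do not expect any serious obstacle: the whole argument is a direct assembly of previously established identities. The only subtlety worth pointing out is that the derived bracket on $\text{End}(HH_*(A,A))$ is only Leibniz in general; graded antisymmetry on the image comes from pulling back along $\Phi$ and is not a property of the ambient bracket.
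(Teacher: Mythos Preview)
Your proposal is correct and follows essentially the same route as the paper: part 1) is obtained by applying Corollary~\ref{endomorphism Gerstenhaber} to $E=HH_*(A,A)$ with Connes boundary $B$, and part 2) is obtained from equation~(\ref{action a gauche}) for products and equation~(\ref{gerstenhaber bracket=derived bracket}) for brackets. Your justification of the final assertion (that the image is a genuine Gerstenhaber algebra, via the quotient by the ideal $\ker\Phi$) is a bit more explicit than the paper's one-line remark, but the content is the same.
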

\begin{proof}
Since Connes boundary $B:HH_*(A,A)\rightarrow HH_{*+1}(A,A)$ satisfies $B\circ B=0$,
by Corollary~\ref{endomorphism Gerstenhaber}, we obtain 1).

Since $i_{ab}=i_a\circ i_b$ (equation~(\ref{action a gauche}))
and $i_{\{a,b\}}=[[i_a,B],i_b]=[i_a,i_b]_B$,
$\Psi$ is a morphism of generalized Gerstenhaber-Loday algebra.

Since $£HH^*(A,A)$ is a Gerstenhaber algebra, $\Phi(HH^*(A,A))$ is also a Gerstenhaber algebra.
\end{proof}
\begin{rem}
If $A$ is a differential graded algebra satisfying the hypotheses of Proposition~\ref{BV algebre homologie de Hochschild}, the morphism $\Phi:HH^*(A,A)\hookrightarrow \text{End}HH_*(A,A)$ of
Theorem~\ref{hochschild morphism de gerstenhaber} is injective
and can be identified with the morphism $\Psi$ of Theorem~\ref{injection d'une BV-algebre dans End}
for the Batalin-Vilkovisky algebra $HH^*(A,A)$.
\end{rem}
\bibliography{Bibliographie}
\bibliographystyle{amsplain}
\end{document}